\newtheorem{theorem}{Theorem}     
\newtheorem{corollary}[theorem]{Corollary}
\newtheorem{lemma}[theorem]{Lemma}
\newtheorem{proposition}[theorem]{Proposition}
\newtheorem{conjecture}[theorem]{Conjecture}
\newtheorem{question}[theorem]{Question}
\newtheorem{claim}{Claim}
\newtheorem{example}{Example}
\newcommand{\im}[1]{\mu_{\rm int}(#1)}
\DeclareMathOperator{\Tr}{Tr}
\newcommand*{\myproofname}{Proof}
\newenvironment{myproof}[1][\myproofname]{\begin{proof}[#1]}{\end{proof}}
\begin{document}
    
\begin{frontmatter}

\title{On the Interval Coloring Impropriety of Graphs}
\author[1]{MacKenzie Carr\corref{cor1}} \ead{mackenzie\_carr@sfu.ca}
\author[2]{Eun-Kyung Cho} \ead{ekcho2020@gmail.com}
\author[3]{Nicholas Crawford} \ead{nicholas.2.crawford@ucdenver.edu}
\author[4a,4b]{Vesna Iršič}
\ead{vesna.irsic@fmf.uni-lj.si}
\author[5]{Leilani Pai} \ead{lpai@huskers.unl.edu}
\author[3]{Rebecca Robinson} \ead{rebecca.j.robinson@ucdenver.edu}
\date{\today}

\cortext[cor1]{Corresponding author}

\affiliation[1]{organization={Department of Mathematics, Simon Fraser University},
addressline={8888 University Drive},
city={Burnaby BC},
postcode={V5A 1S6},
country={Canada}
}
\affiliation[2]{organization={Department of Mathematics, Hankuk University of Foreign Studies},
city={Yongin-si, Gyeonggi-do},
country={Republic of Korea}
}
\affiliation[3]{organization={Department of Mathematics, University of Colorado Denver},
addressline={1201 Larimer Street},
city={Denver},
postcode={80204},
country={USA}
}
\affiliation[4a]{organization={Faculty of Mathematics and Physics, University of Ljubljana},
addressline={Jadranska ulica 19},
city={Ljubljana},
country={Slovenia}
}
\affiliation[4b]{organization={Institute of Mathematics, Physics and Mechanics, Ljubljana, Slovenia},
addressline={Jadranska ulica 19},
city={Ljubljana},
country={Slovenia}
}
\affiliation[5]{organization={Department of Mathematics, University of Nebraska-Lincoln},
addressline={Avery Hall 203},
city={Lincoln, NE},
postcode={68588},
country={USA}
}

\begin{abstract}
    An {\em improper interval (edge) coloring} of a graph $G$ is an assignment of colors to the edges of $G$ satisfying the condition that, for every vertex $v \in V(G)$, the set of colors assigned to the edges incident with $v$ forms an integral interval. 
    An interval coloring is {\em $k$-improper} if at most $k$ edges with the same color all share a common endpoint. 
    The minimum integer $k$ such that there exists a $k$-improper interval coloring of the graph $G$ is the {\em interval coloring impropriety} of $G$, denoted by $\mu_{int}(G)$.
    In this paper, we provide a construction of an interval coloring of a subclass of complete multipartite graphs. This provides additional evidence to the conjecture by Casselgren and Petrosyan that $\mu_{int}(G)\leq 2$ for all complete multipartite graphs $G$.
    Additionally, we determine improved upper bounds on the interval coloring impropriety of several classes of graphs, namely 2-trees, iterated triangulations, and outerplanar graphs. Finally, we investigate the interval coloring impropriety of the corona product of two graphs, $G\odot H$.    
\end{abstract}

\begin{keyword}
    edge coloring, improper coloring, interval coloring, interval coloring impropriety
\end{keyword}

\end{frontmatter}
\section{Introduction}

Let $G$ be a finite graph with vertex set $V(G)$, and edge set $E(G)$. An \emph{edge coloring} of a graph $G$ is a function $\phi: E(G) \to \mathcal{C}$ assigning labels, or colors, from a set $\mathcal{C}$, normally $\mathbb{N}$, to the edges of $G$. An edge coloring is \emph{proper} if the colors are assigned in a way so that adjacent edges (those sharing an endpoint) in $G$ receive different colors. Otherwise, the edge coloring is \emph{improper}. 

Asratian and Kamalian \cite{asratian1994investigation,asratian1987russian} introduced the problem of interval $k$-coloring a graph as a model for creating timetables without gaps for teachers and students. A proper edge coloring of a graph $G$ is said to be an \emph{interval} $k$\emph{-coloring} (also called a consecutive coloring, for example in \cite{giaro1997consecutive}) if exactly $k$ colors are used to color the edges of $G$ and the colors assigned to the edges incident to each vertex $v\in V(G)$ are distinct and form a set of consecutive integers. Note that there exist graphs that do not have an interval coloring, for example complete graphs $K_{n}$ for odd $n \geq 3$.
This leads to the question of whether a given graph has an interval coloring, which has been shown to be NP-complete to determine for bipartite graphs \cite{sevastjanov1990interval}. This problem has been investigated for several classes of graphs, including trees, regular graphs and complete bipartite graphs \cite{hansen1992scheduling, kamalian1989interval}, subclasses of bipartite graphs \cite{GIARO2004outerplanar,hansen1992scheduling,Petrosyan2014bipartite}, outerplanar graphs with maximum degree 3 \cite{Petrosyan2017outerplanar}, Cartesian, strong, lexicographic and tensor products and compositions of graphs \cite{giaro1997consecutive,Petrosyan2011products,Petrosyan2013cartesian,Tepanyan2017composition}, complete tripartite graphs $K_{1,m,n}$ \cite{Grzesik2014K1mn}, and biregular graphs \cite{Asratian2007biregular,Asratian2009biregular,Casselgren2011biregular,Casselgren2017biregular,Casselgren2015biregular,Hanson1998biregular,jing2020remarks,Pyatkin2004biregular,Yang2011biregular}. However, there remain many classes of graphs for which it is not known whether an interval coloring exists for all graphs in the class, including $(3,4)$-biregular graphs.

Hud{\'a}k et al.~\cite{hudak2016improper} introduced \emph{improper interval (edge) colorings}, motivated by scheduling problems without gaps in the schedule, but with some degree of conflict permitted. An improper edge coloring of a graph $G$ is said to be an \emph{improper interval (edge) coloring} if the colors assigned to the edges incident with each vertex $v\in V(G)$ form an integral interval. Subsequently, Casselgren and Petrosyan \cite{casselgren2021improper} introduced the \emph{interval coloring impropriety} of a graph. An improper interval coloring is $k$\emph{-improper} if at most $k$ edges with the same color all share a common endpoint. The smallest integer $k$ such that a graph $G$ has a $k$-improper interval coloring is called the \emph{interval coloring impropriety} (or simply \emph{impropriety}) of $G$, denoted by $\mu_{int}(G)$. Note that assigning every edge of $G$ the same color produces a $\Delta(G)$-improper interval coloring, so the interval coloring impropriety is well-defined for every graph. If $G$ has a proper interval coloring, then $\mu_{int}(G)=1$. 

In this paper, we investigate the impropriety of several classes of graphs. In Section 2, we begin with some preliminary definitions and results. 
In Section 3, we consider complete multipartite graphs, providing a construction of a coloring that shows that a particular subclass of complete multipartite graphs is interval colorable. In Section 4, we provide an upper bound on the impropriety of a 2-tree, and on the impropriety of the $n$-th iterated triangulation, both in terms of the maximum degree of the graph. In Section 5, we improve upon the upper bound on the impropriety of an outerplanar graph given in \cite{casselgren2021improper}. In Section 6, we determine upper bounds on the impropriety of the corona product of two graphs. Finally, in Section 7, we give some directions for further study of the improper interval impropriety of a graph and pose some open problems.

\section{Preliminaries}

For a positive integer $n$, we use $[n]$ to denote the set $\{1, 2, \dots, n\}$.
All graphs are assumed to be simple and finite, unless otherwise indicated. For a vertex $v$ in a graph $G$, we use $N_G(v)$ to denote the neighborhood of $v$ in $G$, and $d_G(v)$ to denote the degree of $v$ in $G$, or $N(v)$ and $d(v)$, respectively, when the graph is obvious from context. 
For a positive integer $k$, if $v \in V(G)$ satisfies $d_G(v) = k$, then we say that $v$ is a {\it $k$-vertex} in $G$.
The maximum degree of a vertex in $G$ is denoted by $\Delta(G)$ and the minimum degree of a vertex in $G$ by $\delta(G)$. A {\it clique} is a set of pairwise adjacent vertices in a graph. A complete graph, denoted by $K_n$, is a graph of order $n$ in which $V(K_n)$ forms a clique. A {\it path} in a graph $G$ is denoted by $v_1 v_2 \dots v_n$, where $v_i v_{i+1} \in E(G)$ for $i \in [n-1]$. We denote the path graph with $n$ vertices by $P_n$. Similarly, a {\it cycle} in a graph $G$ is denoted by $v_1 v_2 \dots v_n v_1$, where $v_i v_{i+1} \in E(G)$ for $i \in [n-1]$ and $v_n v_1 \in E(G)$.  
The cycle graph with $n$ vertices is denoted by $C_n$. 
For a positive integer $k \ge 3$, we refer to $C_k$ as a {\it $k$-cycle}.
A {\it star} $S_n$ is a graph that consists of a center vertex $v$ and leaves $u_1, u_2, \dots, u_n$ with $v u_i \in E(S_n)$ for $i \in [n]$. Note that $S_n = K_{1,n}$. A {\it spider graph} is a tree with a center vertex of degree at least 3, and all other vertices with degree at most 2. 
The family of spider graphs whose center vertex has degree exactly $k$ is denoted by $SP_k$. A {\it caterpillar graph} is a tree for which the removal of all leaves produces a path. A {\it wheel graph} is a graph formed by connecting a single universal vertex to all vertices of a cycle. 
For a positive integer $n$, we use $W_n$ to denote a wheel graph with $n$ vertices.

A {\it planar graph} is a graph which can be drawn in the plane without any edges crossing. A {\it plane graph} is a planar graph with a specific planar embedding. 
Thus, a plane graph divides the plane into a set of regions, called {\it faces}.
Each face is bounded by a closed walk called the {\it boundary of the face}.
Each plane graph has exactly one unbounded face, and we call such a face the {\it outer face}.
All the other bounded faces of a plane graph are called {\it inner faces}.
An {\it outerplanar graph} is a planar graph that has an embedding in which all vertices are on the boundary of the outer face. For all graph theory terminology not defined here, refer to \cite{bondy2010graph}.

The \emph{chromatic index} of a graph $G$, denoted by $\chi^\prime(G)$, is the minimum number of colors required to construct a proper edge coloring of $G$. Next, we state some existing results about the chromatic index and its relationship with impropriety.

\begin{theorem}
[Vizing’s Theorem] For any graph $G$, either $\chi^\prime(G) = \Delta(G)$ or $\chi^\prime(G) = \Delta(G)+1$. 
\end{theorem}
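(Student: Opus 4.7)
The plan is to prove Vizing's Theorem by induction on $|E(G)|$. The lower bound $\chi^\prime(G) \geq \Delta(G)$ is immediate, since the $\Delta(G)$ edges incident with a maximum-degree vertex must all receive distinct colors in any proper edge coloring, and the base case of the induction (no edges) is trivial. So I concentrate on proving the upper bound $\chi^\prime(G) \leq \Delta(G) + 1$.

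For the inductive step I pick any edge $e = uv_0$ and, by induction, take a proper edge coloring $\phi$ of $G - e$ with palette $[\Delta(G)+1]$. Since every vertex of $G - e$ has degree at most $\Delta(G)$, at least one color is missing at every vertex under $\phi$. If some color is missing at both $u$ and $v_0$, assigning that color to $e$ extends $\phi$ to all of $G$. Otherwise I grow a \emph{Vizing fan} at $u$: a maximal sequence $v_0, v_1, \ldots, v_k$ of distinct neighbors of $u$ together with colors $\alpha_0, \alpha_1, \ldots, \alpha_k$ such that $\alpha_i$ is missing at $v_i$ and $\phi(uv_{i+1}) = \alpha_i$ for $i < k$. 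The fan is finite because $u$ has at most $\Delta(G)$ neighbors.

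The rest of the argument splits into two cases. If $\alpha_k$ is missing at $u$, I perform a \emph{fan shift}: replace $\phi(uv_i)$ by $\alpha_i$ for every $0 \leq i \leq k$. A direct check shows that the shifted coloring remains proper on $G - e$, and now the color $\phi(uv_0)$ that was used before the shift has been released at $v_0$ while $\alpha_k$ remains missing at $u$, so $e$ can be colored. Otherwise, by maximality of the fan, $\alpha_k$ coincides with an earlier fan color $\alpha_j$ for some $j < k$. Here I invoke a \emph{Kempe chain} argument on the subgraph $H \subseteq G - e$ induced by the edges colored $\alpha$ or $\alpha_k$, where $\alpha$ is any color missing at $u$. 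Since each of $\alpha$ and $\alpha_k$ appears on at most one edge at each vertex, $H$ has maximum degree at most $2$, so every component is a path or an even cycle. Swapping the two colors along a carefully chosen component changes the missing-color set at $v_j$ or at $v_k$ in just the right way so that either $e$ can be colored directly or a fan shift on a truncated fan finishes the extension.

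The main obstacle will be the Kempe chain step: one must choose the colors and the component of $H$ for the swap so that after the swap the coloring is still proper on $G - e$ and remains compatible with the desired fan shift (which alters many edges at $u$ simultaneously). This is handled by a subcase analysis based on whether the Kempe chain containing $u$ terminates at $v_j$ or at $v_k$; in each subcase one verifies that the extended coloring on $G$ is proper. Combining this with the trivial lower bound closes the induction and yields the theorem.
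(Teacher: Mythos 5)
The paper does not prove this statement at all: Vizing's Theorem is quoted as a classical background result (with no proof and no citation to a proof), so there is no in-paper argument to compare yours against. Your plan is the canonical one --- induction on $|E(G)|$, Vizing fans, a fan shift when the last fan color is missing at $u$, and an $\alpha/\alpha_k$ Kempe-chain swap otherwise --- and that strategy is correct and is exactly how the theorem is proved in standard references.

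That said, as written your proposal stops short of a proof at precisely the point you yourself flag as ``the main obstacle.'' The assertion that swapping colors along ``a carefully chosen component'' of $H$ ``changes the missing-color set at $v_j$ or at $v_k$ in just the right way'' is the entire content of the hard case, and it is stated rather than verified. To close it you need the standard dichotomy: let $\alpha$ be missing at $u$ and $\beta=\alpha_k=\alpha_j$, and consider the $\alpha/\beta$ path $P$ containing $u$ (it is a path, not a cycle, since $\alpha$ is missing at $u$). If $P$ does not end at $v_j$, swap colors on the $\alpha/\beta$ component containing $v_j$; this leaves the coloring proper, keeps $\alpha$ missing at $u$, makes $\alpha$ missing at $v_j$, and then the fan shift on the truncated fan $v_0,\dots,v_j$ colors $uv_j$ with $\alpha$. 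If $P$ does end at $v_j$, it cannot also end at $v_k$, so swap on the component containing $v_k$ and perform the full fan shift, coloring $uv_k$ with $\alpha$. One must also check that the swapped component never contains an edge of the fan other than possibly $uv_{j+1}$ in the first subcase (which is why the shift is truncated at $v_j$ there). None of this is difficult, but without it the proposal is an outline of the known proof rather than a proof; since the paper treats the theorem as a citable classical fact, supplying the full fan--Kempe argument (or a reference to Vizing's original paper or a textbook treatment) is what would actually be required.
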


Graphs with $\chi^\prime(G) = \Delta(G)$ are called \emph{Class 1} and those with $\chi^\prime(G) = \Delta(G)+1$ are called \emph{Class 2}. There are some graph classes that are known to be Class 1. 

\begin{theorem}
[K{\"o}nig’s Edge Coloring Theorem] If a graph $G$ is bipartite, then $G$ is Class 1. 
\end{theorem}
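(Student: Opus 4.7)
The plan is to prove König's theorem by induction on the number of edges $|E(G)|$, using the classical alternating-path (Kempe chain) argument, with bipartiteness invoked only at the very last step. The base case of a single edge is immediate since one color suffices. For the inductive step, I would fix a bipartite graph $G$ with $\Delta(G) = \Delta$, pick an arbitrary edge $uv \in E(G)$, and apply the induction hypothesis to $G - uv$, which is still bipartite and has maximum degree at most $\Delta$, to obtain a proper edge coloring of $G - uv$ with the color set $[\Delta]$. The task is then to extend this coloring to $uv$.

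Since $d_{G-uv}(u) \le \Delta - 1$ and $d_{G-uv}(v) \le \Delta - 1$, there exist colors $\alpha$ and $\beta$ in $[\Delta]$ with $\alpha$ missing at $u$ and $\beta$ missing at $v$. If $\alpha = \beta$, I simply color $uv$ with that color and the extension is proper. Otherwise $\alpha \ne \beta$, and I build the maximal alternating walk $P$ starting at $v$ whose first edge has color $\alpha$, second edge has color $\beta$, third edge has color $\alpha$, and so on. Because the coloring is proper, at each vertex at most one edge of each color is incident, so $P$ is in fact a simple path and is uniquely determined.

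The heart of the argument, and the step where I expect the real content to sit, is to show that $P$ does not terminate at $u$. Suppose for contradiction it does. Since $\alpha$ is missing at $u$, the path must enter $u$ on an edge of color $\beta$, so the sequence of colors along $P$ is $\alpha, \beta, \alpha, \beta, \ldots, \beta$, an even number of edges; thus $P$ has even length as a $u$–$v$ path. But $u$ and $v$ lie in opposite parts of the bipartition because $uv \in E(G)$, so every $u$–$v$ path has odd length, a contradiction. This is precisely the place where the bipartite hypothesis is essential; in a non-bipartite graph the Kempe chain might close up into an obstructing odd structure and force the use of a $(\Delta+1)$-st color.

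Knowing that $P$ avoids $u$, I then swap the colors $\alpha$ and $\beta$ along the edges of $P$. The swap preserves properness: internal vertices of $P$ have exactly one $\alpha$-edge and one $\beta$-edge on $P$, and these get exchanged; the endpoint $v$ had $\beta$ missing before, and now sees $\beta$ instead of $\alpha$ on its $P$-edge, so $\alpha$ becomes missing at $v$; the far endpoint of $P$ only loses one of the two alternating colors by the maximality of $P$. Since $\alpha$ was already missing at $u$ and no edge incident with $u$ was recolored (as $u \notin V(P)$), $\alpha$ is now missing at both $u$ and $v$, so I assign color $\alpha$ to $uv$ to complete a proper $\Delta$-edge coloring of $G$. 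Combined with Vizing's theorem, this shows $\chi'(G) = \Delta(G)$, i.e., $G$ is Class 1.
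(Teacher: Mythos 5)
The paper states K\"onig's Edge Coloring Theorem as a classical background result and gives no proof of its own, so there is nothing to compare against. Your argument is the standard and correct Kempe-chain proof: the induction on edges, the parity contradiction showing the maximal $\alpha$--$\beta$ alternating path from $v$ cannot reach $u$ (an even-length $u$--$v$ path is impossible when $uv$ is an edge of a bipartite graph), and the color swap are all sound; the only cosmetic remark is that the final appeal to Vizing's theorem is unnecessary, since exhibiting a proper $\Delta(G)$-edge-coloring already gives $\chi'(G)=\Delta(G)$ directly.
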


Knowing whether a graph is Class 1 or Class 2 can help with determining whether the graph has an interval coloring or, if not, what the impropriety of the graph is. 

\begin{theorem}[\cite{asratian1994investigation}]
If a graph $G$ is interval colorable (i.e. $\mu_{int}(G) =1$), then $G$ is Class 1. 
\end{theorem}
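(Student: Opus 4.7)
The plan is to take an interval coloring of $G$ and fold it down modulo $\Delta(G)$ to obtain a proper edge coloring that uses only $\Delta(G)$ colors, which then forces $\chi'(G) = \Delta(G)$ by Vizing's theorem and the trivial lower bound $\chi'(G) \geq \Delta(G)$.

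Concretely, since $\mu_{int}(G) = 1$, there exists an interval coloring $\phi \colon E(G) \to \{1, 2, \dots, t\}$ that is proper. Because $\phi$ is proper and the colors incident to each vertex $v$ form an integral interval, the set of colors on the edges at $v$ is exactly $\{a_v, a_v+1, \dots, a_v + d(v) - 1\}$ for some integer $a_v$, and in particular these $d(v)$ consecutive integers are pairwise distinct. I would then define a new coloring
\[
\phi'(e) \;=\; \bigl((\phi(e) - 1) \bmod \Delta(G)\bigr) + 1,
\]
which takes values in $\{1, 2, \dots, \Delta(G)\}$.

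The key step is to verify that $\phi'$ is still a proper edge coloring of $G$. Fix a vertex $v$; the colors assigned by $\phi$ to its incident edges are $d(v)$ consecutive integers, and $d(v) \leq \Delta(G)$. Any $d(v)$ consecutive integers map to $d(v)$ distinct residues modulo $\Delta(G)$ (this is the only nontrivial observation, and it holds precisely because $d(v) \leq \Delta(G)$), so the edges at $v$ still receive pairwise distinct colors under $\phi'$. Hence $\phi'$ is a proper edge coloring using at most $\Delta(G)$ colors.

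Combining this with the trivial bound $\chi'(G) \geq \Delta(G)$ yields $\chi'(G) = \Delta(G)$, so $G$ is Class 1. I expect no real obstacle here; the only subtlety is making sure the modular reduction preserves properness, which reduces to the elementary fact that fewer than $\Delta(G)+1$ consecutive integers have distinct residues modulo $\Delta(G)$.
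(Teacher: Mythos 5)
Your argument is correct, and it is in fact the standard proof of this result: reducing an interval coloring modulo $\Delta(G)$ preserves properness because the $d(v) \leq \Delta(G)$ consecutive colors at each vertex $v$ land on distinct residues, yielding a proper edge coloring with $\Delta(G)$ colors and hence $\chi'(G) = \Delta(G)$. The paper itself states this theorem only as a citation to Asratian and Kamalian and gives no proof, so there is nothing in the text to compare against, but your reduction is complete and the one subtlety you flag (that fewer than $\Delta(G)+1$ consecutive integers have distinct residues modulo $\Delta(G)$) is exactly the right point to check.
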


Equivalently, a graph that is Class 2 does not admit an interval coloring, and thus has impropriety at least two. 

\begin{theorem}[\cite{casselgren2021improper}]
If $G$ is a regular graph of Class $i$, then $\mu_{int}(G) = i$ for $i \in \{1, 2\}$. 
\end{theorem}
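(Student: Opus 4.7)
\begin{myproof}
My plan is to treat the two cases separately.

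For the Class $1$ case, I would fix any proper $r$-edge coloring $\phi:E(G)\to[r]$, which exists since $\chi'(G)=r$. Every vertex has degree $r$ and its incident edges receive $r$ distinct colors from the $r$-element set $[r]$, so the color set at every vertex is precisely $[1,r]$, already an integral interval. Hence $\phi$ is itself a proper interval coloring, giving $\mu_{\rm int}(G)=1$.

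For the Class $2$ case, the lower bound $\mu_{\rm int}(G)\ge 2$ is immediate from the preceding theorem: an interval coloring would force $G$ to be Class $1$, contradicting the hypothesis. For the matching upper bound I would start from a proper $(r+1)$-edge coloring $\phi$ guaranteed by Vizing's theorem, with color-class matchings $M_1,\dots,M_{r+1}$, and then define a new coloring $\phi'$ by \emph{pairing consecutive matchings}: assign color $j$ to every edge of $M_{2j-1}\cup M_{2j}$ for $1\le j\le \lfloor(r+1)/2\rfloor$, and, when $r+1$ is odd, give the leftover matching $M_{r+1}$ its own (last) color.

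The verification is then vertex by vertex. At a vertex $v$ let $m(v)\in[r+1]$ be the unique color missing under $\phi$, so that $v$ meets exactly one edge of each $M_i$ with $i\ne m(v)$. Each paired color class contributes either one edge to $v$ (if the pair contains $m(v)$) or two edges, and the leftover singleton, when present, contributes one edge unless $m(v)=r+1$. Consequently no color appears more than twice at $v$, and the colors used at $v$ form an initial integer interval of the form $[1,k]$ with $k\in\{\lfloor(r+1)/2\rfloor,\lceil(r+1)/2\rceil\}$. Thus $\phi'$ is a $2$-improper interval coloring, giving $\mu_{\rm int}(G)\le 2$.

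The one subtlety is the parity of $r+1$: when $r$ is even, the leftover singleton forces a short case split, and one has to check that for vertices with $m(v)=r+1$ the omitted color is exactly the largest one (so that no interior gap appears in the color multiset at $v$). This comes out cleanly because, by construction, the only color class contributing fewer than two edges at $v$ is the one tied to $m(v)$ itself, never an interior one.
\end{myproof}
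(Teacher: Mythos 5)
Your proof is correct. Note that the paper does not prove this statement at all --- it is quoted from Casselgren and Petrosyan \cite{casselgren2021improper} as a known result --- so there is no in-paper argument to compare against. Your two halves both check out: in the Class 1 case an $r$-edge-coloring of an $r$-regular graph necessarily uses all of $[1,r]$ at every vertex, so it is already an interval coloring; in the Class 2 case the lower bound follows from the implication ``interval colorable $\Rightarrow$ Class 1,'' and your pairing of consecutive Vizing color classes gives the upper bound, since at each vertex exactly one of the $r+1$ colors is missing, so each merged class contributes one or two edges and the resulting color set at the vertex is an initial interval $[1,k]$ with multiplicities at most $2$. Your parity discussion of the leftover matching $M_{r+1}$ is also handled correctly: when $m(v)=r+1$ the only absent merged color is the last one, so no interior gap can arise. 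This is essentially the standard argument for this result.
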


As a corollary, this result implies that complete graphs and cycles of even order are interval colorable, while those of odd order have impropriety 2. 

For general graphs, it is known that the impropriety is at most two if the degree of every vertex is small. 

\begin{theorem}[\cite{casselgren2021improper}]
    \label{thm:delta5}
    If $G$ is a graph with $\Delta(G) \leq 5$, then $\mu_{int}(G) \leq 2$. 
\end{theorem}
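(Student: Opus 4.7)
Plan. My approach is case analysis on $\Delta(G)$. For $\Delta(G) \leq 2$, every component of $G$ is a path or a cycle. Paths and even cycles admit proper interval $2$-colorings by alternating colors $1$ and $2$, giving $\mu_{int} = 1$. For an odd cycle $v_0 v_1 \cdots v_{2k} v_0$, I would alternate colors $1,2$ along $v_0 v_1 v_2 \cdots v_{2k}$ and assign color $1$ to $v_{2k} v_0$; the resulting coloring has the interval property at every vertex and impropriety exactly $2$ at $v_0$, consistent with the fact (from the regular Class~2 theorem) that $\mu_{int}(C_{2k+1}) = 2$.

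For $\Delta(G) \in \{3,4,5\}$ I would start from a proper $\chi'(G)$-edge-coloring $\phi$ of $G$, which by Vizing's theorem uses at most $\Delta(G)+1 \leq 6$ colors. I would then produce a new coloring $\psi$ using only three colors by the merging rule $\psi(e) = \lceil \phi(e)/2 \rceil \in \{1,2,3\}$. Because $\phi$ is proper, each $\psi$-color is attained by at most two edges at any vertex, so $\psi$ is automatically $2$-improper.

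The remaining task is to ensure that at each vertex $v$ the set $S_v \subseteq \{1,2,3\}$ of $\psi$-colors appearing at $v$ is an integer interval. The only failing case is $S_v = \{1,3\}$, which occurs precisely when $v$ is incident with some edge of $\phi$-color in $\{1,2\}$ and some edge of $\phi$-color in $\{5,6\}$ but with no edge of $\phi$-color in $\{3,4\}$. I would eliminate these ``middle-gap'' vertices by Kempe-chain recolorings of $\phi$: a $(2,3)$- or $(4,5)$-Kempe swap along a maximal alternating path starting at $v$ introduces an edge of $\phi$-color $3$ or $4$ at $v$ while preserving the properness of $\phi$, and hence the $2$-improperness of the induced $\psi$.

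The hardest part is verifying that these Kempe swaps can be iterated so as to eliminate every middle-gap vertex without creating new ones, for each $\Delta(G) \in \{3,4,5\}$ and each Vizing class. This requires a finite but careful structural analysis of where the alternating chains terminate and of how the parity of their lengths interacts with the local degree of $v$. I expect this step to contain the real technical content of the argument (and, in particular, the need to sometimes choose $\phi$ to begin with so that one of the ``outer'' $\phi$-colors $1$, $2$, $5$, $6$ is absent at every vertex of degree less than $\Delta(G)$), which is what is carried out in detail in \cite{casselgren2021improper}.
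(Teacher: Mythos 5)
This theorem is quoted by the paper from \cite{casselgren2021improper} without proof, so there is no in-paper argument to compare against; I can only assess your proposal on its own terms. The cases $\Delta(G)\leq 2$ are fine, and the merging rule $\psi(e)=\lceil\phi(e)/2\rceil$ does give impropriety at most $2$ and even settles the interval property for free whenever $\phi$ uses at most $4$ colors (then $\psi$ uses only $\{1,2\}$ and every nonempty subset is an interval). So the real content is confined to the cases where $5$ or $6$ colors are needed, exactly as you say.

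But the step you defer --- iterating Kempe swaps to eliminate all middle-gap vertices --- is a genuine gap, not a routine verification, and as stated the procedure need not terminate. A single $(2,3)$-swap along the maximal alternating path starting at a bad vertex $v$ does fix $v$, but it also exchanges which of the $\phi$-colors $2,3$ is present at the far endpoint $w$ of that path. If $w$ previously carried $\phi$-colors $\{3,5\}$ (so $S_w=\{2,3\}$, an interval), it carries $\{2,5\}$ afterwards, so $S_w=\{1,3\}$ and $w$ is a newly created middle-gap vertex. You give no potential function or invariant showing the number of bad vertices decreases, and the naive one does not. There are further unaddressed case splits: if $v$ sees only $\phi$-colors $\{1,5\}$, the $(2,3)$- and $(4,5)$-chains through $v$ are empty and do nothing, so a different pair of colors must be swapped, with its own side effects. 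Finally, the parenthetical hope that $\phi$ can be chosen so that the outer colors $1,2,5,6$ avoid all vertices of degree less than $\Delta(G)$ is not something Vizing's theorem supplies, and you give no construction of such a $\phi$. As written, the proposal is a plausible program whose central step is unproved; to complete it you would need either a terminating recoloring argument (e.g.\ a monovariant on the multiset of bad vertices together with a canonical choice of swap) or a different route, such as decomposing $G$ into degree-balanced subgraphs of small maximum degree and coloring those with consecutive pairs of colors.
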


Finally, we make use of the following results that determine the impropriety of some well-known classes of graphs, whose maximum degree is unbounded.

\begin{theorem} [\cite{kamalian1989interval}]
\label{thm:forest} 
    If $T$ is a forest, then $T$ is interval colorable (i.e.~$\mu_{int}(T)=1$).
\end{theorem}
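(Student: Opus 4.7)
The plan is first to reduce to the case of a single tree. A forest is a disjoint union of trees and edges from distinct components are pairwise non-adjacent, so an interval coloring of the whole forest may be obtained by interval-coloring each component separately (reusing the same palette). Hence it suffices to prove $\mu_{int}(T) = 1$ for every tree $T$.

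For a tree $T$, I would give a direct constructive proof by a rooted sweep. Fix an arbitrary root $r \in V(T)$ and color the edges incident with $r$ by $1, 2, \dots, d(r)$, so that the incident colors at $r$ form the interval $[1, d(r)]$. I would then process the remaining vertices in breadth-first order while maintaining the invariant that every already-visited non-root vertex $v$ has received some color $c(v)$ on its edge to its parent. When $v$ is visited, set $k = d_T(v) - 1$ and color the $k$ edges from $v$ to its children by $c(v)+1, c(v)+2, \dots, c(v)+k$; the colors on the edges incident with $v$ then form the interval $[c(v),\, c(v)+k]$, which has $d(v)$ elements.

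Two easy verifications close the argument. At every vertex, the incident colors form a consecutive integer interval (immediate from the description of the sweep). The coloring is also proper, since at every vertex the incident edges are assigned $d(v)$ pairwise distinct integers by construction; and because $T$ is acyclic, no edge is colored twice, so no conflict between the rules applied at its two endpoints can arise.

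There is no serious obstacle. The only care needed is for degenerate components (an isolated vertex or an isolated edge), which are handled trivially. An equivalent inductive argument on $|V(T)|$ also works: remove a leaf $v$ with neighbor $u$, interval-color $T - v$, and extend by coloring $uv$ with one less than the minimum (or one more than the maximum) of the colors currently at $u$; properness is preserved since the new color lies outside the previous interval at $u$, and the interval property is extended by one unit at $u$ and is trivial at the new leaf $v$.
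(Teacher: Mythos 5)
Your proof is correct. Note, however, that the paper does not prove this statement at all: it is quoted as a known result of Kamalian \cite{kamalian1989interval}, so there is no in-paper argument to compare against. Your two arguments (the rooted breadth-first sweep assigning $c(v)+1,\dots,c(v)+d(v)-1$ to the child edges, and the equivalent leaf-removal induction) are both standard, complete proofs of this classical fact; the only point worth making explicit, if one insists on the paper's definition of an interval $k$-coloring using exactly $k$ colors, is that the intervals at consecutive vertices along any root-to-leaf path overlap, so the set of colors used on each component is itself a set of consecutive integers starting at $1$.
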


\begin{theorem}[\cite{casselgren2021improper}]
\label{thm:wheels}
For a wheel graph $W_n$, $n \geq 4$, 
$\mu_{int}(W_n) = 1$ if and only if $n\in\{4,7,10\}$. Otherwise, $\mu_{int}(W_n) = 2$. 
\end{theorem}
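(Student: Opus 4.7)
The plan addresses three parts: (a) $\mu_{int}(W_n) \le 2$ for all $n \ge 4$, (b) $\mu_{int}(W_n) = 1$ for $n \in \{4, 7, 10\}$, and (c) $\mu_{int}(W_n) \ge 2$ for $n \ge 4$ with $n \notin \{4, 7, 10\}$.

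For parts (a) and (b): (a) follows from an explicit $2$-improper interval coloring. Writing the center as $c$ and the rim in cyclic order as $v_1, \ldots, v_{n-1}$, I color the spoke $cv_i$ with color $i$ and assign rim edges so that at each rim vertex the three incident colors form a short integer interval with at most two edges of any single color. Some care is needed at the ``seam'' rim edge $v_{n-1}v_1$ since the spoke colors $1$ and $n-1$ are far apart; one resolves this by choosing a spoke permutation so that the large gap-jump occurs elsewhere or by introducing an extra color. For part (b), $W_4 = K_4$ admits a standard proper $3$-edge-coloring which is automatically interval; for $n = 7$ and $n = 10$ I would exhibit a specific spoke permutation around the rim together with a rim-edge coloring, and verify vertex-by-vertex that the three incident colors form a three-term integer interval.

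Part (c) is the heart of the theorem. Suppose $W_n$ admits a proper interval coloring. Since the center has degree $n-1$, the spokes use $n-1$ consecutive colors, which we normalize to $\{1, \ldots, n-1\}$. At each rim vertex $v_i$, the three colors $\{s_i, c_{i-1}, c_i\}$ (spoke and two incident rim edges) form a three-term integer interval and therefore realise all three residues modulo $3$: at a rim vertex whose spoke is in residue class $\rho$, the two incident rim-edge colors lie in the other two classes, one each. Summing these contributions over the rim yields, for each $\rho \in \{0, 1, 2\}$, the identity $2\tilde{r}_\rho = (n-1) - N_\rho$, where $\tilde{r}_\rho$ is the number of rim edges of residue $\rho$ and $N_\rho$ is the number of spokes of residue $\rho$ among $\{1, \ldots, n-1\}$. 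Integrality of each $\tilde{r}_\rho$ forces $N_0, N_1, N_2$ to have the same parity, which is equivalent to $3 \mid n-1$; this eliminates all $n \not\equiv 1 \pmod 3$.

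The main obstacle is then to rule out $n \in \{13, 16, 19, \ldots\}$, where the residue count is silent. Here I would use a finer local-to-global analysis based on the ``type'' $t_i \in \{0, 1, 2\}$ of each rim vertex $v_i$, defined as the position of $s_i$ within its $3$-interval. The vertex with spoke $1$ is forced to have $t = 0$ (rim colors $\{2, 3\}$) and the vertex with spoke $n-1$ is forced to have $t = 2$ (rim colors $\{n-3, n-2\}$). The admissible transitions $(t_i, t_{i+1})$ between adjacent rim vertices constrain the spoke-difference $s_{i+1} - s_i$ to a finite set depending only on the pair $(t_i, t_{i+1})$. Propagating these constraints along both arcs of the rim cycle from the forced extremal vertices, using that every spoke in $\{1, \ldots, n-1\}$ is visited exactly once and that the rim cycle closes up, produces a contradiction for $n \ge 13$. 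This final cycle-closure case analysis is the most delicate step of the argument.
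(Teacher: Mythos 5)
First, note that the paper does not actually prove this statement: Theorem~\ref{thm:wheels} is quoted from \cite{casselgren2021improper} without proof, so there is no in-paper argument to compare against and your proposal must stand on its own. On its merits, the strongest part of your plan is the residue count in part (c): since every rim vertex has degree $3$, its three incident colors are three consecutive integers and hence hit each residue class modulo $3$ exactly once, and the double count $2\tilde{r}_\rho=(n-1)-N_\rho$ is correct. Since the $n-1$ spoke colors are consecutive, the $N_\rho$ all have the same parity only when $3\mid n-1$, which cleanly eliminates every $n\not\equiv 1\pmod 3$. That is a genuine and correct reduction.

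However, the proposal has real gaps at exactly the points where the theorem has content. (i) The case $n\equiv 1\pmod 3$, $n\ge 13$ (i.e.\ $W_{13},W_{16},\dots$) is the heart of the lower bound and is only gestured at: the ``type propagation and cycle closure'' analysis is never carried out, and you yourself flag it as the delicate step. Worse, its starting point is unjustified: after normalizing the spoke colors to $\{1,\dots,n-1\}$ there is no reason a rim edge cannot receive color $0$ or $n$, so the claim that the spoke-$1$ vertex is forced to have type $0$ (rim colors $\{2,3\}$) needs an additional argument about the global minimum color, which you do not supply. (ii) The upper bound $\mu_{int}(W_n)\le 2$ is not actually established: with spokes colored $1,\dots,n-1$ in cyclic order the seam edge $v_{n-1}v_1$ must simultaneously lie in $\{1,2,3\}$ and in $\{n-2,n-1,n\}$, which is impossible for $n\ge 6$, and ``choose a different spoke permutation'' is left as a promise rather than a construction (one that works is to place the spoke colors in the cyclic order $1,3,5,\dots$ then $\dots,6,4,2$ so consecutive spokes differ by at most $2$, and color each rim edge with a value between its two spoke colors). (iii) The interval colorings of $W_7$ and $W_{10}$ are asserted to exist but not exhibited. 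As it stands the proposal proves only the necessity of $n\equiv 1\pmod 3$ and the case $W_4=K_4$; the remaining parts are incomplete.
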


\section{A subclass of complete multipartite graphs}

The impropriety of complete multipartite graphs was previously studied in \cite{casselgren2021improper}. We recall the following results.

\begin{theorem}[{\cite{casselgren2021improper}}]
    \label{thm:multipartite-r/2}
    If $n_1, \ldots, n_r \geq 1$, then $\im{K_{n_1, \ldots, n_r}} \leq \left \lceil \frac{r}{2} \right \rceil$.
\end{theorem}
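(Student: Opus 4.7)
My plan is to construct an explicit improper interval coloring of $K_{n_1,\ldots,n_r}$ with impropriety at most $\lceil r/2\rceil$ by pairing up the parts, decomposing the edges into bipartite pieces, and coloring each piece with a canonical bipartite interval coloring so that the per-vertex color intervals combine into one interval.

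Set $m:=\lceil r/2\rceil$ and group the parts as $P_i=V_{2i-1}\cup V_{2i}$ for $i\le\lfloor r/2\rfloor$, with $P_m=V_r$ when $r$ is odd. The edges of $K_{n_1,\ldots,n_r}$ split into \emph{inter-pair} edges (those of the coarser complete multipartite graph $K_{|P_1|,\ldots,|P_m|}$) and \emph{intra-pair} edges (the bipartite graphs $K_{V_{2i-1},V_{2i}}$ for each non-singleton pair). Fix a (near-)$1$-factorization of $K_m$ on super-vertices $\{P_1,\ldots,P_m\}$ into matchings $M_1,\ldots,M_{\chi'(K_m)}$, where $\chi'(K_m)$ is $m-1$ or $m$ according to the parity of $m$. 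Each matching $M_j$ yields a bipartite ``level'' subgraph $B_j$ of $K_{n_1,\ldots,n_r}$, namely the vertex-disjoint union of $K_{P_i,P_{i'}}$ over $\{P_i,P_{i'}\}\in M_j$. The disjoint union $B_0$ of all intra-pair graphs is likewise bipartite. Together, $B_0,B_1,\ldots,B_{\chi'(K_m)}$ edge-partition $K_{n_1,\ldots,n_r}$, and each vertex lies in at most $m$ of them: every super-vertex $P_i$ is covered by exactly $m-1$ matchings of $K_m$, and a vertex of a non-singleton pair contributes one more via $B_0$.

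Next, I index each pair $P_i$ by listing its vertices as $v_{2i-1}^1,\ldots,v_{2i-1}^{n_{2i-1}},v_{2i}^1,\ldots,v_{2i}^{n_{2i}}$ in positions $1,\ldots,|P_i|$, and I color every bipartite piece via the canonical complete-bipartite interval coloring $c(u_a u'_b)=a+b-1$ using these indices. At a vertex $v=v_{2i-1}^a$ (position $a$ in $P_i$), every piece containing $v$ contributes a color interval starting at $a$, so the contributions from $B_0$ (of length $n_{2i}$) and from each level $B_j$ containing $P_i$ (of length $|P_{i'(j)}|$) all share the left endpoint $a$ and overlap into a single interval. At a vertex $v=v_{2i}^b$ (position $n_{2i-1}+b$ in $P_i$), the intra-pair piece contributes $[b,b+n_{2i-1}-1]$, which abuts exactly the level intervals $[n_{2i-1}+b,n_{2i-1}+b+|P_{i'(j)}|-1]$; again the union is a single interval. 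Since each piece is properly edge-colored, any color appears at most once in each piece at $v$, hence at most $m=\lceil r/2\rceil$ times overall.

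The main obstacle is this alignment argument: verifying that the canonical labelings really do cause the per-piece intervals at every vertex to combine into one interval (by overlap for vertices in $V_{2i-1}$ and by abutment at the boundary $n_{2i-1}$ for vertices in $V_{2i}$), rather than leaving gaps. This is a case-analysis that depends essentially on how the intra-pair indexing of $V_{2i}$ interacts with the shifted inter-pair position $n_{2i-1}+b$, and it is the technical heart of the construction.
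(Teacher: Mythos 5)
Your construction is correct, but note that the paper does not actually prove this statement --- it is quoted from Casselgren and Petrosyan \cite{casselgren2021improper} without proof --- so the natural in-paper comparison is with the sequential-coloring machinery of Section~3, and your argument is that same idea in different clothing: the ``canonical coloring $c(u_au'_b)=a+b-1$'' applied with position indices inside a pair $P_i$ is exactly the paper's ``sequential coloring with shift,'' with the shift absorbed into the position offset $n_{2i-1}$. The verification goes through: each of the at most $m=\lceil r/2\rceil$ bipartite pieces meeting a vertex is properly edge-colored, so no color repeats more than $m$ times at that vertex; at $v_{2i-1}^a$ all pieces contribute intervals with common left endpoint $a$; and at $v_{2i}^b$ the intra-pair interval $[b,\,b+n_{2i-1}-1]$ abuts the inter-pair intervals starting at $n_{2i-1}+b$. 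Two remarks. First, the phrase ``color every bipartite piece \ldots{} using these indices'' is ambiguous: if the \emph{intra}-pair graph $K_{V_{2i-1},V_{2i}}$ were also colored with the position indices (so that $v_{2i}^b$ enters as $n_{2i-1}+b$ there too), the color set at a vertex of $V_{2i-1}$ could have a gap --- for $r=3$, $n_1=10$, $n_2=n_3=1$ one gets $\{a\}\cup\{a+10\}$. Your explicit interval computations show you intend the local index $b$ for the intra-pair piece, and with that convention the proof is sound, but this choice should be stated explicitly rather than left to be inferred. Second, the near-$1$-factorization of $K_m$ is superfluous: the bound ``at most $m$ pieces per vertex'' is just $m-1$ inter-pair partners plus one intra-pair graph, and since the complete bipartite graphs within one matching class are vertex-disjoint, grouping them into levels changes nothing at any single vertex. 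Finally, it is worth noting what each approach buys: your pairing scheme handles arbitrary part sizes and arbitrary $r$ at the cost of impropriety $\lceil r/2\rceil$, whereas the more delicate shift bookkeeping of Theorem~\ref{thm:multipartite-special} achieves impropriety $1$, but only for the special family $K_{s,t,\ldots,s,t}$ with $2^{\ell}$ parts.
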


\begin{conjecture}[{\cite{casselgren2021improper}}]
    \label{conj:multipartite-2}
    If $G$ is a complete $r$-partite graph, then $\im{G} \leq 2$.
\end{conjecture}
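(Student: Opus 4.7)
The plan is to attack the conjecture by induction on the number of parts $r$. Since Theorem~\ref{thm:multipartite-r/2} already gives $\im{K_{n_1,\ldots,n_r}} \le 2$ whenever $r \le 4$, the substance begins at $r = 5$, where the objective is to improve the $\lceil r/2 \rceil$ bound by overlapping color palettes rather than concatenating them. Concretely, for $K = K_{n_1,\ldots,n_r}$ with $r \ge 5$, I would partition the parts into two blocks $A = \{V_1,\ldots,V_s\}$ and $B = \{V_{s+1},\ldots,V_r\}$ with $2 \le s \le r-2$, writing $H_A$ and $H_B$ for the induced complete multipartite subgraphs and $F$ for the complete bipartite cross-layer between $U_A = V_1 \cup \cdots \cup V_s$ and $U_B = V_{s+1} \cup \cdots \cup V_r$. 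By the induction hypothesis, $H_A$ and $H_B$ each admit a 2-improper interval coloring, and K\"onig's edge coloring theorem combined with the known proper interval colorings of complete bipartite graphs yields a proper interval coloring of $F$.

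The gluing step would shift the palette of $H_B$ so that its colors lie strictly after those of $H_A$, and would shift the palette of $F$ so that at each $v \in U_A$ the $F$-interval at $v$ sits immediately above the $H_A$-interval at $v$, and at each $v \in U_B$ the $F$-interval at $v$ sits immediately below the $H_B$-interval at $v$. Because the three palettes can be kept (nearly) disjoint, the 2-impropriety bound transfers automatically from the three constituent colorings; the interval property of the composite coloring then reduces to a boundary-matching condition at each vertex, namely that the top of the $H_A$-interval at $v \in U_A$ coincides with one less than the bottom of the $F$-interval at $v$, and symmetrically for $v \in U_B$.

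The hard part is exactly this boundary matching. Different vertices in the same part $V_i$ need not receive the same interval of colors in a 2-improper interval coloring of $H_A$, so simultaneously aligning the top-of-$H_A$ color with the bottom-of-$F$ color for all vertices of $V_i$ may be incompatible, and local recoloring can drive the impropriety above $2$. To push the argument through, the inductive hypothesis has to be strengthened to produce a 2-improper interval coloring whose per-vertex intervals begin at a prescribed color (or lie within a controlled short window), in the spirit of interval colorings with prescribed endpoints studied for complete bipartite graphs. Establishing such a boundary-controlled, gluing-invariant inductive statement is the principal obstacle; a complementary route is to handle the balanced case $n_1 = \cdots = n_r$ via the regular-graph dichotomy and then try to extend by a padding-and-deletion scheme, where the difficulty instead becomes ensuring that the interval structure survives the removal of the padding edges.
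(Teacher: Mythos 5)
The statement you were given is a conjecture of Casselgren and Petrosyan; the paper does not prove it, and it remains open. The paper's contribution toward Conjecture~\ref{conj:multipartite-2} is only partial: Theorem~\ref{thm:multipartite-special} exhibits, for each $\ell \geq 1$, a proper interval coloring of $K_{s,t,s,t,\dots,s,t}$ with $2^{\ell}$ parts, built by a recursive block construction in which every pairwise coloring between parts is a sequential coloring with a carefully chosen shift, so that each row of the coloring matrix decomposes into consecutive sub-intervals by design. No argument for general $n_1,\dots,n_r$ is attempted, so you should not measure your proposal against a proof that does not exist; you should measure it against whether it closes the conjecture, and it does not.

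The gap you flag yourself is genuine and, as stated, fatal. If the three palettes for $H_A$, $F$, and $H_B$ are disjoint, the interval property at a vertex $v \in U_A$ forces the maximum color of the $H_A$-interval at $v$ to equal one less than the minimum color of the $F$-interval at $v$, simultaneously for all vertices. In the standard proper interval coloring of the cross-layer $F$ (sequential with shift $\alpha$), the minimum color at the $i$-th vertex of $U_A$ is $\alpha + i$, i.e.\ the bottoms of the $F$-intervals march upward by exactly one along an ordering of $U_A$; but a 2-improper interval coloring of $H_A$ supplied by a bare induction hypothesis gives no control over the per-vertex maxima, and there is no reason they form an arithmetic progression of common difference one across all of $U_A$. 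The ``boundary-controlled'' strengthening you ask for is exactly what the paper's construction achieves in the special case $n_1=n_3=\cdots=s$, $n_2=n_4=\cdots=t$ with $2^{\ell}$ parts, where the sequential colorings make every per-vertex endpoint predictable; producing such control for arbitrary part sizes is the open problem, not a lemma one may assume. Your fallback does settle the balanced case $n_1=\cdots=n_r$ (such graphs are regular, hence have impropriety equal to their chromatic class, which is at most 2), but the padding-and-deletion step is unsubstantiated: deleting the padded edges removes colors from the middle or ends of the intervals at both endpoints, and you offer no mechanism to restore the interval property without driving the impropriety up. In short, the proposal correctly locates the difficulty but does not overcome it.
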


Note that, if true, Conjecture \ref{conj:multipartite-2} is best possible since there exist complete multipartite graphs of Class 2, thus with impropriety at least 2. In this section, we provide further support for the conjecture by studying complete multipartite graphs $K_{s,t,s,t,\ldots,s,t}$. Note that, by Theorem~\ref{thm:multipartite-r/2}, complete multipartite graphs with at most four parts satisfy Conjecture~\ref{conj:multipartite-2}. The main value of Theorem~\ref{thm:multipartite-special} described below is that it provides a family of complete multipartite graphs with an arbitrarily large number of parts that still satisfy Conjecture~\ref{conj:multipartite-2}.

 \emph{Parts} of a complete multipartite graph are its maximal independent sets. Let $G$ be a graph $K_{s,t,s,t,\ldots,s,t}$ with $k$ parts of size $s$ and $k$ parts of size $t$. 
 We say that $G$ has $m=2k$ parts.
 Then, we denote the parts of $G$ by $A_1, B_1, \ldots, A_k, B_k$, where $|A_i| = s$ and $|B_i| = t$ for every $i \in [k]$. The set of edges between parts $A$ and $B$ 
 is denoted by $E(A B)$ 
 to simplify the expressions in the rest of the section.
If $A$ and $B$ are parts of $G$ where $V(A) = \{x_1, \ldots, x_s\}$ and $V(B)=\{y_1,\ldots,y_t\}$, then we say that we color $E(AB)$ \emph{sequentially with shift $\alpha$} if the edge $x_i y_j$ is assigned color $\alpha + i + j -1$ for every $i \in [s]$, $j \in [t]$. Notice that, in this case, colors on edges incident to $x_i$ are $\alpha + i, \ldots, \alpha + i+t-1$, and colors on edges incident to $y_j$ are $\alpha + j, \ldots, \alpha + j+s-1$. In matrix form, the sequential coloring with shift $\alpha$ between parts $A$ and $B$ is presented in Table \ref{tab:sequential}.

  Suppose that vertices of $A$ and $B$ are incident with some already colored edges. We will say for short that we \emph{extend the coloring sequentially} to color $E(AB)$ in the following way. Let $\beta + i - 1$ be the maximum color incident with $x_i$ ($i \in [s]$), and let $\beta + j - 1$ be the maximum color incident with $y_j$ ($j \in [t]$). Then color $E(AB)$ sequentially with shift $\beta$.  

\begin{table}[h]
    \centering
    \scalebox{0.8}{
    \begin{tabular}{c||c c c c |}
         & $x_1$ & $x_2$ & $\cdots$ & $x_s$ \\ \hline\hline
        $y_1$ & $\alpha + 1$ & $\alpha + 2$ & $\cdots$ & $\alpha + s$ \\
        $y_2$ & $\alpha + 2$ & $\alpha + 3$ & $\cdots$ & $\alpha + s + 1$ \\
        $\vdots$ & $\vdots$ &  &  & $\vdots$ \\
        $y_t$ & $\alpha + t$ & $\alpha + t + 1$ & $\cdots$ & $\alpha + s + t - 1$ \\ \hline
    \end{tabular}}
    \caption{The sequential coloring with shift $\alpha$ in matrix form.}
    \label{tab:sequential}
\end{table}

Additionally, in the following proofs, we will assume that the vertices are labelled (since this labelling is needed for the sequential coloring to be well-defined). More precisely, if the parts of the graph $K_{s,t,s,t,\ldots,s,t}$ are $A_1$, $B_1$, $\ldots$, $A_k$, $B_k$, where $|A_i| = s$, $|B_i|=t$, then let $V(A_i) = \{x_{i,1}, \ldots, x_{i,s}\}$ and $V(B_i) = \{y_{i,1}, \ldots, y_{i,t}\}$ for all $i \in [k]$.

It follows from Theorem~\ref{thm:multipartite-r/2} that $\im{K_{s,t}}=1$ for $s, t \geq 1$. To present the basic case of the coloring used later in the section, we recall the proof of this result. Let $(A, B)$ be the bipartition of $G$ and vertices labelled as described above. Color $E(AB)$ sequentially with shift 0, and observe that this is a proper interval coloring of $G$. Now, we extend this approach to complete multipartite graphs with a larger number of parts.

\begin{lemma}
\label{lem:Kstst}
    For $s,t\geq 1$, the graph $K_{s,t,s,t}$ is interval colorable.
\end{lemma}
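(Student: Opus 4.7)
The plan is to exhibit a proper interval coloring of $K_{s,t,s,t}$ by choosing, for each of the six pairs of distinct parts, a shift and then coloring that pair's edges sequentially with the assigned shift. Since a graph with a proper interval coloring has impropriety $1$ and $\im{G}\geq 1$ whenever $E(G)\neq\emptyset$, this will establish $\im{K_{s,t,s,t}}=1$.

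Concretely, I would set
$\alpha_{A_1B_1}=0$, $\alpha_{A_2B_2}=0$, $\alpha_{A_1A_2}=t$, $\alpha_{B_1B_2}=s$, $\alpha_{A_1B_2}=s+t$, and $\alpha_{A_2B_1}=s+t$,
and color $E(PQ)$ sequentially with the assigned shift for each pair $(P,Q)$ of parts. The motivation is to view $K_{s,t,s,t}$ as two edge-disjoint copies of $K_{s,t}$ (on $A_1\cup B_1$ and on $A_2\cup B_2$) together with a ``cross'' $K_{s+t,s+t}$ between the two halves; the two inner $K_{s,t}$'s are placed at the bottom of the palette, and the four cross pairs carry shifts chosen so that at each vertex the block from the inner pair is extended contiguously on the high-color side by the blocks coming from the cross edges.

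The verification then splits into four essentially identical calculations, one for each vertex orbit. For instance, at $x_{1,i}\in A_1$ the three blocks of colors coming from $E(A_1B_1)$, $E(A_1A_2)$, and $E(A_1B_2)$ are $[i,i+t-1]$, $[t+i,t+s+i-1]$, and $[t+s+i,2t+s+i-1]$, which are pairwise disjoint and whose union is the interval $[i,2t+s+i-1]$ of length $s+2t=d(x_{1,i})$. The analogous computations at $y_{1,j}\in B_1$, $x_{2,i}\in A_2$, and $y_{2,j}\in B_2$ produce the intervals $[j,2s+t+j-1]$, $[i,s+2t+i-1]$, and $[j,2s+t+j-1]$, all of the correct length (namely, the vertex's degree). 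Pairwise disjointness of the three blocks at each vertex, together with the fact that the sequential coloring of each $K_{s,t}$ subgraph is itself proper, certifies that the global coloring is a proper interval coloring.

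I do not foresee a real obstacle beyond guessing the correct sextuple of shifts; once these are in hand, the argument is pure bookkeeping and works uniformly in $s$ and $t$, requiring no case split on parities or magnitudes. The ``magic'' is essentially that the values $0,t,s,t+s$ are exactly the cumulative sizes needed so that at all four vertex types the three incident blocks abut without overlap.
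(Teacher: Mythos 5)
Your proposed coloring is exactly the one in the paper: the same six sequential shifts ($0,0,t,s,s+t,s+t$ on $E(A_1B_1),E(A_2B_2),E(A_1A_2),E(B_1B_2),E(A_1B_2),E(A_2B_1)$) and the same vertex-by-vertex verification that the three incident color blocks abut into an interval of length equal to the degree. The argument is correct and matches the paper's proof essentially verbatim.
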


\begin{proof}
    Using the notation defined above, for $i \in [2]$, color $E(A_i B_i)$ sequentially with shift 0, color $E(A_1A_2)$ sequentially with shift $t$, 
    color $E(A_1 B_2)$ sequentially with shift $s+t$, color $E(B_1B_2)$ sequentially with shift $s$, and color $E(A_2B_1)$ sequentially with shift $s+t$.
    For an example with $s=4$ and $t=3$, see Table \ref{tab:K_4343}.

\begin{table}[h]
    \centering
    \scalebox{0.8}{
    \begin{tabular}{c||c c c c|c c c|c c c c|c c c|}
         & $x_{1,1}$ & $x_{1,2}$ & $x_{1,3}$ & $x_{1,4}$ & $y_{1,1}$ & $y_{1,2}$ & $y_{1,3}$ & $x_{2,1}$ & $x_{2,2}$ & $x_{2,3}$ & $x_{2,4}$ & $y_{2,1}$ & $y_{2,2}$ & $y_{2,3}$ \\ \hline\hline
        $x_{1,1}$ & & & & & 1 & 2 & 3 & 4 & 5 & 6 & 7 & 8 & 9 & 10 \\  
        $x_{1,2}$ & & & & & 2 & 3 & 4 & 5 & 6 & 7 & 8 & 9 & 10 & 11 \\
        $x_{1,3}$ & & & & & 3 & 4 & 5 & 6 & 7 & 8 & 9 & 10 & 11 & 12 \\
        $x_{1,4}$ & & & & & 4 & 5 & 6 & 7 & 8 & 9 & 10 & 11 & 12 & 13 \\ \hline
        $y_{1,1}$ & 1 & 2 & 3 & 4 & & & & 8 & 9 & 10  & 11 & 5 & 6 & 7 \\
        $y_{1,2}$ & 2 & 3 & 4 & 5 & & & & 9 & 10 & 11  & 12 & 6 & 7 & 8 \\
        $y_{1,3}$ & 3 & 4 & 5 & 6 & & & & 10 & 11 & 12  & 13 & 7 & 8 & 9 \\ \hline
        $x_{2,1}$ & 4 & 5 & 6 & 7 & 8 & 9 & 10 & & & & & 1 & 2 & 3 \\
        $x_{2,2}$ & 5 & 6 & 7 & 8 & 9 & 10 & 11 & & & & & 2 & 3 & 4 \\
        $x_{2,3}$ & 6 & 7 & 8 & 9 & 10 & 11 & 12 & & & & & 3 & 4 & 5 \\
        $x_{2,4}$ & 7 & 8 & 9 & 10 & 11 & 12 & 13 & & & & & 4 & 5 & 6 \\ \hline
        $y_{2,1}$ & 8 & 9 & 10 & 11 & 5 & 6 & 7 & 1 & 2 & 3 & 4 & & & \\
        $y_{2,2}$ & 9 & 10 & 11 & 12 & 6 & 7 & 8 & 2 & 3 & 4 & 5 & & & \\
        $y_{2,3}$ & 10 & 11 & 12 & 13 & 7 & 8 & 9 & 3 & 4 & 5 & 6 & & & \\ \hline
    \end{tabular}}
    \caption{The edge-coloring of $K_{4,3,4,3}$.}
    \label{tab:K_4343}
\end{table}

    Using the above coloring, the colors on edges incident with $x_{i,1}$ are $i, \ldots, i+t-1$, $i+t, \ldots, i+s+t-1$, $i+s+t, \ldots, i+s+2t-1$, which is an interval with no color occurring more than once. The interval property can be similarly verified at each of the other vertices. Thus, $\im{K_{s,t,s,t}}=1$. 
\end{proof}

\begin{theorem}
\label{thm:multipartite-special}
    If $m=2^{\ell}$ for some $\ell\geq 1$, then the complete multipartite graph $K_{s,t,s,t,\dots,s,t}$ with $m$ parts is interval colorable. 
\end{theorem}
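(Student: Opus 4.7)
The plan is to proceed by induction on $\ell$. The base case $\ell = 1$ is $K_{s,t}$, which is interval colorable via the sequential coloring with shift $0$ noted in the paragraph preceding Lemma \ref{lem:Kstst}. For the inductive step, suppose the statement holds for $\ell - 1$, and let $G = K_{s,t,\dots,s,t}$ have $m = 2^{\ell}$ parts. Split the parts so that $V(G) = V(H_1) \sqcup V(H_2)$ where each $H_i$ induces a copy of $H := K_{s,t,\dots,s,t}$ with $m/2 = 2^{\ell - 1}$ parts (the first $m/2$ parts of $G$ going into $H_1$, the rest into $H_2$). Apply the coloring guaranteed by the induction hypothesis to each of $H_1$ and $H_2$. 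Since any part of $H_1$ is distinct from any part of $H_2$ in $G$, the remaining uncolored edges form a complete bipartite graph between $V(H_1)$ and $V(H_2)$, naturally partitioned into cross-pair subgraphs $E(P, Q)$ for $P$ a part of $H_1$ and $Q$ a part of $H_2$.

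I would color each such $E(P, Q)$ sequentially with a shift chosen so that, at each vertex, the new colors append consecutively onto its existing interval. Guided by the four cross-pair shifts in Lemma \ref{lem:Kstst}, at every part $A_a$ of $G$ I would order its $m/2$ cross-partners in the other half so that the part-types alternate $A, B, A, B, \dots, A, B$ (with $m/4$ of each type), and at every $B_b$ in the order $B, A, B, A, \dots, B, A$. The pairings of same-type partners across the two halves are determined by cyclic $1$-factorizations of $K_{m/4, m/4}$, one for each of the four pair-classes (A-to-A, A-to-B, B-to-A, B-to-B). Because a $1$-factorization labels each edge with a single round shared by both endpoints, the position of $Q$ in $P$'s ordering equals the position of $P$ in $Q$'s, so the shift assigned to $E(P, Q)$ agrees from both endpoints.

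The main technical hurdle is establishing this global consistency of shifts and verifying that each vertex of $G$ ends up with a single integral interval of colors. Once consistency is granted, the verification is a direct computation: at $x_{a, i} \in A_a$, the within-$H_1$ coloring provides the interval $[i,\, i + (m/4 - 1)s + (m/4)t - 1]$, and each successive cross-pair appends a block of $s$ or $t$ consecutive colors (the type of the partner determining which), producing exactly $[i,\, i + (m/2 - 1)s + (m/2) t - 1]$, whose length equals $\deg_G(x_{a, i})$; a symmetric computation at $y_{b, j} \in B_b$ yields the corresponding interval and completes the induction.
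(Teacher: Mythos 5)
Your proposal is correct and follows essentially the same route as the paper: both halve the set of parts, color each half recursively, and fill in the cross edges with sequential colorings whose shifts are the appropriate multiples of $s$ and $t$, arranged so that each vertex accumulates exactly one color block of each type --- the paper's recursive block-copying scheme is itself a (dyadic) $1$-factorization of $K_{m/4,m/4}$, so your cyclic choice is an immaterial variation. The one point you should make explicit is that the induction must carry the stronger statement that $x_{a,i}$ receives exactly the colors $i,\dots,i+d(x_{a,i})-1$ (and analogously for $y_{b,j}$), since ``interval colorable'' alone does not justify the specific intervals you quote for $H_1$ and $H_2$ in the verification step.
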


\begin{proof}

For the cases where $m=2$ or $m=4$, see Lemma   \ref{lem:Kstst} and the example before it. We give an algorithmic proof for $m= 2^{\ell+1}$ when $\ell \ge 2$ 
that the graph $K_{s,t,\ldots, s,t}$ with $m$ parts is interval colorable. 
First, we describe some general conventions used in the rest of the proof.

For the sake of notation, we use matrices to describe the set of colors assigned to edges between parts. We will define an $n\times n$ matrix (with $n$ being the number of vertices in the graph) with rows and columns indexed by the vertices in the graph. We will call this matrix the coloring matrix. Note that, by symmetry, we need only describe the entries of the coloring matrix above the main diagonal, since the entries below the main diagonal are obtained by transposing the coloring matrix.
For $i\in [\frac{m}{2}]$, let $A_i$ and $B_i$ be the parts of size $s$ and $t$, respectively. The rows and columns of the coloring matrix will follow the ordering $A_1,B_1,A_2,B_2,\dots,A_{\frac{m}{2}},B_{\frac{m}{2}}$. Denote by $0_s$ and $0_t$ the zero matrices of size $s\times s$ and $t\times t$, respectively. Let $X_{s,k}$ be the $s\times s$ matrix recording the sequential coloring with shift $(k-1)s+kt$, for $k\geq 1$, on the edges between two parts of size $s$. 
Let $X_{t,k}$ be the $t\times t$ matrix recording the sequential coloring with shift $ks+(k-1)t$, for $k\geq 1$, on the edges between two parts of size $t$. 
Let $Y_{s,k}$ (similarly, $Y_{t,k}$) be the $s\times t$ (resp.~$t\times s$) matrix recording the sequential coloring with shift $ks+kt$, for $k\geq 0$, on the edges between a part of size $s$ and a part of size $t$. For example, see Table~\ref{tab:K_stst}. The rows and columns of each of these matrices are indexed by the vertices in the corresponding parts. Note that a particular row or column of the coloring matrix gives the colors assigned to all of the edges incident with the corresponding vertex. 

\begin{table}[h]
    \centering
    \scalebox{0.8}{
    \begin{tabular}{c||c c c c|c c c|c c c c|c c c|}
         & $A_1$ & $B_1$ & $A_2$ & $B_2$ \\ \hline\hline
        $A_1$ & $0_s$ & $Y_{s,0}$ & $X_{s,1}$ & $Y_{s,1}$ \\  
        $B_1$ & $Y_{t,0}$ & $0_t$ & $Y_{t,1}$ & $X_{t,1}$ \\
        $A_2$ & $X_{s,1}$ & $Y_{s,1}$ & $0_s$ & $Y_{s,0}$ \\
        $B_2$ & $Y_{t,1}$ & $X_{t,1}$ & $Y_{t,0}$ & $0_t$ \\ \hline
    \end{tabular}}
    \caption{The edge-coloring of $K_{s,t,s,t}$. }
    \label{tab:K_stst}
\end{table}

Now, in a particular row of $Y_{s,k}$, the colors that occur in that row are $ks+kt+i,ks+kt+i+1,\dots,ks+kt+i+t-1=ks+(k+1)t+i-1$, as in the definition of sequential coloring. Similarly, those that occur in a particular row of $X_{s,k+1}$ are $ks+(k+1)t+i,ks+(k+1)t+i+1,\dots,ks+(k+1)t+i+s-1=(k+1)s+(k+1)t+i-1$. From this, it is clear that the colors in any row of the coloring matrix formed from submatrices $Y_{s,0},X_{s,1},Y_{s,1},\dots,X_{s,r},Y_{s,r}$, for some $r\geq 1$, form a set of consecutive integers. 

Additionally, in a particular row of $Y_{t,k}$, the colors that occur in that row are $ks+kt+j,ks+kt+j+1,\dots,ks+kt+j+s-1=(k+1)s+kt+j-1$, as in the definition of sequential coloring. Similarly, those that occur in a particular row of $X_{t,k+1}$ are $(k+1)s+kt+j,(k+1)s+kt+j+1,\dots,(k+1)s+kt+j+t-1=(k+1)s+(k+1)t+j-1$. From this, it is clear that the colors in any row of the coloring matrix formed from submatrices $Y_{t,0},X_{t,1},Y_{t,1},\dots,X_{t,r},Y_{t,r}$, for some $r\geq 1$, form a set of consecutive integers. 

A similar argument shows that any column formed by submatrices $Y_{s,0},X_{t,1},Y_{s,1},\dots,X_{t,r},Y_{s,r}$ and by $Y_{t,0},X_{s,1},Y_{t,1},\dots,X_{s,r},Y_{t,r}$, for some $r\geq 1$, also forms a set of consecutive integers. 

Therefore, we can use the matrices $X_{s,k},X_{t,k},Y_{s,k}$ and $Y_{t,k}$ to generate an interval coloring of the complete multipartite graph with the described number of parts as follows. 

Let $m=2^{\ell+1}$ for $\ell \ge 2$.
We partition the parts into two sub-parts $M_1 = \{A_1, B_1, \ldots, A_{\frac{m}{4}}, B_{\frac{m}{4}}\}$ and $M_2 = \{A_{\frac{m}{4}+1}, B_{\frac{m}{4}+1}, \ldots, A_{\frac{m}{2}}, B_{\frac{m}{2}}\}$.
By the algorithm described above, we color the edges with both ends in $M_i$, $i\in[2]$. 
Thus, all that remains is to color the edges with one endpoint in $M_1$ and the other in $M_2$.
To do this, we extend the coloring in the rows corresponding to $A_1$ and $B_1$ to $M_2$ sequentially using $X_{s,k}, X_{t,k}, Y_{s,k}$, and $Y_{t,k}$ with $k \in \{\frac{m}{4}, \ldots, \frac{m}{2}-1\}$. See Table~\ref{tab:m=8} for the case when $\ell=2$.

\begin{table}[h]
    \centering
    \scalebox{0.8}{
    \begin{tabular}{c||c c c c|c c c c|c c c c|c c c|}
         & $A_1$ & $B_1$ & $A_2$ & $B_2$ & $A_3$ & $B_3$ & $A_4$ & $B_4$\\ \hline\hline
        $A_1$ & $0_s$ & $Y_{s,0}$ & $X_{s,1}$ & $Y_{s,1}$ & $X_{s,2}$ & $Y_{s,2}$ & $X_{s,3}$ & $Y_{s,3}$\\  
        $B_1$ & $Y_{t,0}$ & $0_t$ & $Y_{t,1}$ & $X_{t,1}$ & $Y_{t,2}$ & $X_{t,2}$ & $Y_{t,3}$ & $X_{t,3}$\\
        $A_2$ & $X_{s,1}$ & $Y_{s,1}$ & $0_s$ & $Y_{s,0}$ & & & &  \\
        $B_2$ & $Y_{t,1}$ & $X_{t,1}$ & $Y_{t,0}$ & $0_t$ & & & &  \\ \hline
        $A_3$ & $X_{s,2}$ & $Y_{s,2}$ & & & $0_s$ & $Y_{s,0}$ & $X_{s,1}$ & $Y_{s,1}$ \\
        $B_3$ & $Y_{t,2}$ & $X_{t,2}$ & & & $Y_{t,0}$ & $0_t$ & $Y_{t,1}$ & $X_{t,1}$ \\
        $A_4$ & $X_{s,3}$ & $Y_{s,3}$ & & & $X_{s,1}$ & $Y_{s,1}$ & $0_s$ & $Y_{s,0}$ \\
        $B_4$ & $Y_{t,3}$ & $X_{t,3}$ & & & $Y_{t,1}$ & $X_{t,1}$ & $Y_{s,0}$ &$0_t$ \\ \hline
    \end{tabular}}
    \caption{Initial coloring step of $m=8$.}
    \label{tab:m=8}
\end{table}

Take each $2 \times 2$ submatrix with rows corresponding to $A_1$ and $B_1$ and copy it to color the diagonal and off-diagonal $2 \times 2$ blocks in the unfilled submatrix corresponding to the rows corresponding to $A_2$ and $B_2$. See Table~\ref{tab:K_stststst} for the case $\ell=2$ and Table~\ref{tab:m=16} for the case $\ell=3$. 

\begin{table}[h]
    \centering
    \scalebox{0.8}{
    \begin{tabular}{c||c c c c|c c c c|c c c c|c c c|}
        & $A_1$ & $B_1$ & $A_2$ & $B_2$ & $A_3$ & $B_3$ & $A_4$ & $B_4$\\ \hline\hline
        $A_1$ & $0_s$ & $Y_{s,0}$ & $X_{s,1}$ & $Y_{s,1}$ & $X_{s,2}$ & $Y_{s,2}$ & $X_{s,3}$ & $Y_{s,3}$\\  
        $B_1$ & $Y_{t,0}$ & $0_t$ & $Y_{t,1}$ & $X_{t,1}$ & $Y_{t,2}$ & $X_{t,2}$ & $Y_{t,3}$ & $X_{t,3}$\\
        $A_2$ & $X_{s,1}$ & $Y_{s,1}$ & $0_s$ & $Y_{s,0}$ & $X_{s,3}$ & $Y_{s,3}$ & $X_{s,2}$ & $Y_{s,2}$  \\
        $B_2$ & $Y_{t,1}$ & $X_{t,1}$ & $Y_{t,0}$ & $0_t$ & $Y_{t,3}$ & $X_{t,3}$ & $Y_{t,2}$ & $X_{t,2}$  \\ \hline
        $A_3$ & $X_{s,2}$ & $Y_{s,2}$ & $X_{s,3}$ & $Y_{s,3}$ & $0_s$ & $Y_{s,0}$ & $X_{s,1}$ & $Y_{s,1}$ \\
        $B_3$ & $Y_{t,2}$ & $X_{t,2}$ & $Y_{t,3}$ & $X_{t,3}$ & $Y_{t,0}$ & $0_t$ & $Y_{t,1}$ & $X_{t,1}$ \\
        $A_4$ & $X_{s,3}$ & $Y_{s,3}$ & $X_{s,2}$ & $Y_{s,2}$ & $X_{s,1}$ & $Y_{s,1}$ & $0_s$ & $Y_{s,0}$ \\
        $B_4$ & $Y_{t,3}$ & $X_{t,3}$ & $Y_{t,2}$ & $X_{t,2}$ & $Y_{t,1}$ & $X_{t,1}$ & $Y_{s,0}$ &$0_t$ \\ \hline
       
    \end{tabular}}
    \caption{Matrix showing the edge colors of $K_{s,t,s,t,s,t,s,t}$.}
    \label{tab:K_stststst}
\end{table}

\begin{table}[H]
    \centering
    \scalebox{0.8}{
    \begin{tabular}{c||c c c c|c c c c|c c c c|c c c c|}
         & $A_1$ & $B_1$ & $A_2$ & $B_2$ & $A_3$ & $B_3$ & $A_4$ & $B_4$ 
         & $A_5$ & $B_5$ & $A_6$ & $B_6$ & $A_7$ & $B_7$ & $A_8$ & $B_8$ \\ \hline\hline
        $A_1$ & $0_s$ & $Y_{s,0}$ & $X_{s,1}$ & $Y_{s,1}$ & $X_{s,2}$ & $Y_{s,2}$ & $X_{s,3}$ & $Y_{s,3}$ & $X_{s,4}$ & $Y_{s,4}$ & $X_{s,5}$ & $Y_{s,5}$ & $X_{s,6}$ & $Y_{s,6}$ & $X_{s,7}$ & $Y_{s,7}$\\  
        $B_1$ & $Y_{t,0}$ & $0_t$ & $Y_{t,1}$ & $X_{t,1}$ & $Y_{t,2}$ & $X_{t,2}$ & $Y_{t,3}$ & $X_{t,3}$ & $Y_{t,4}$ & $X_{t,4}$ & $Y_{t,5}$ & $X_{t,5}$ & $Y_{t,6}$ & $X_{t,6}$ & $Y_{t,7}$ & $X_{t,7}$\\
        $A_2$ & $X_{s,1}$ & $Y_{s,1}$ & $0_s$ & $Y_{s,0}$ & $X_{s,3}$ & $Y_{s,3}$ & $X_{s,2}$ & $Y_{s,2}$ & $X_{s,5}$ & $Y_{s,5}$ & $X_{s,4}$ & $Y_{s,4}$ & $X_{s,7}$ & $Y_{s,7}$ & $X_{s,6}$ & $Y_{s,6}$\\
        $B_2$ & $Y_{t,1}$ & $X_{t,1}$ & $Y_{t,0}$ & $0_t$ & $Y_{t,3}$ & $X_{t,3}$ & $Y_{t,2}$ & $X_{t,2}$ & $Y_{t,5}$ & $X_{t,5}$ & $Y_{t,4}$ & $X_{t,4}$ & $Y_{t,7}$ & $X_{t,7}$ & $Y_{t,6}$ & $X_{t,6}$\\ \hline
        $A_3$ & $X_{s,2}$ & $Y_{s,2}$ & $X_{s,3}$ & $Y_{s,3}$ & $0_s$ & $Y_{s,0}$ & $X_{s,1}$ & $Y_{s,1}$ & & & & & & & &  \\
        $B_3$ & $Y_{t,2}$ & $X_{t,2}$ & $Y_{t,3}$ & $X_{t,3}$ & $Y_{t,0}$ & $0_t$ & $Y_{t,1}$ & $X_{t,1}$ & & & & & & & &  \\
        $A_4$ & $X_{s,3}$ & $Y_{s,3}$ & $X_{s,2}$ & $Y_{s,2}$ & $X_{s,1}$ & $Y_{s,1}$ & $0_s$ & $Y_{s,0}$ & & & & & & & &  \\
        $B_4$ & $Y_{t,3}$ & $X_{t,3}$ & $Y_{t,2}$ & $X_{t,2}$ & $Y_{t,1}$ & $X_{t,1}$ & $Y_{t,0}$ & $0_t$ & & & & & & & &  \\ \hline
        $A_5$ & $X_{s,4}$ & $Y_{s,4}$ & $X_{s,5}$ & $Y_{s,5}$ & & & & & $0_s$ & $Y_{s,0}$ & $X_{s,1}$ & $Y_{s,1}$ & $X_{s,2}$ & $Y_{s,2}$ & $X_{s,3}$ & $Y_{s,3}$\\  
        $B_5$ & $Y_{t,4}$ & $X_{t,4}$ & $Y_{t,5}$ & $X_{t,5}$ & & & & & $Y_{t,0}$ & $0_t$ & $Y_{t,1}$ & $X_{t,1}$ & $Y_{t,2}$ & $X_{t,2}$ & $Y_{t,3}$ & $X_{t,3}$\\
        $A_6$ & $X_{s,5}$ & $Y_{s,5}$ & $X_{s,4}$ & $Y_{s,4}$ & & & & & $X_{s,1}$ & $Y_{s,1}$ & $0_s$ & $Y_{s,0}$ & $X_{s,3}$ & $Y_{s,3}$ & $X_{s,2}$ & $Y_{s,2}$\\
        $B_6$ & $Y_{t,5}$ & $X_{t,5}$ & $Y_{t,4}$ & $X_{t,4}$ & & & & & $Y_{t,1}$ & $X_{t,1}$ & $Y_{t,0}$ & $0_t$ & $Y_{t,3}$ & $X_{t,3}$ & $Y_{t,2}$ & $X_{t,2}$\\ \hline
        $A_7$ & $X_{s,6}$ & $Y_{s,6}$ & $X_{s,7}$ & $Y_{s,7}$ & & & & & $X_{s,2}$ & $Y_{s,2}$ & $X_{s,3}$ & $Y_{s,3}$ & $0_s$ & $Y_{s,0}$ & $X_{s,1}$ & $Y_{s,1}$ \\
        $B_7$ & $Y_{t,6}$ & $X_{t,6}$ & $Y_{t,7}$ & $X_{t,7}$ & & & & & $Y_{t,2}$ & $X_{t,2}$ & $Y_{t,3}$ & $X_{t,3}$ & $Y_{t,0}$ & $0_t$ & $Y_{t,1}$ & $X_{t,1}$ \\
        $A_8$ & $X_{s,7}$ & $Y_{s,7}$ & $X_{s,6}$ & $Y_{s,6}$ & & & & & $X_{s,3}$ & $Y_{s,3}$ & $X_{s,2}$ & $Y_{s,2}$ & $X_{s,1}$ & $Y_{s,1}$ & $0_s$ & $Y_{s,0}$ \\
        $B_8$ & $Y_{t,7}$ & $X_{t,7}$ & $Y_{t,6}$ & $X_{t,6}$ & & & & & $Y_{t,3}$ & $X_{t,3}$ & $Y_{t,2}$ & $X_{t,2}$ & $Y_{t,1}$ & $X_{t,1}$ & $Y_{t,0}$ & $0_t$ \\  \hline
    \end{tabular}}
    \caption{Initial coloring for $m=16$.}
    \label{tab:m=16}
\end{table}

Repeat the same method for the $4 \times 4$ submatrices corresponding to the rows $A_1,B_1,A_2,B_2$ to complete the submatrices corresponding to the rows $A_3,B_3,A_4,B_4$. See Table~\ref{tab:K_big} for the case when $\ell=4$. We repeat this process for $2^j \times 2^j$ submatrices for $j \in [\frac{m}{8}]$ corresponding to $A_1, B_1, \ldots, A_{j-1}, B_{j-1}$ until we fill in the whole matrix. 

 \begin{table}[H]
    \centering
    \scalebox{0.8}{
    \begin{tabular}{c||c c c c|c c c c|c c c c|c c c c|}
         & $A_1$ & $B_1$ & $A_2$ & $B_2$ & $A_3$ & $B_3$ & $A_4$ & $B_4$ 
         & $A_5$ & $B_5$ & $A_6$ & $B_6$ & $A_7$ & $B_7$ & $A_8$ & $B_8$ \\ \hline\hline
        $A_1$ & $0_s$ & $Y_{s,0}$ & $X_{s,1}$ & $Y_{s,1}$ & $X_{s,2}$ & $Y_{s,2}$ & $X_{s,3}$ & $Y_{s,3}$ & $X_{s,4}$ & $Y_{s,4}$ & $X_{s,5}$ & $Y_{s,5}$ & $X_{s,6}$ & $Y_{s,6}$ & $X_{s,7}$ & $Y_{s,7}$\\  
        $B_1$ & $Y_{t,0}$ & $0_t$ & $Y_{t,1}$ & $X_{t,1}$ & $Y_{t,2}$ & $X_{t,2}$ & $Y_{t,3}$ & $X_{t,3}$ & $Y_{t,4}$ & $X_{t,4}$ & $Y_{t,5}$ & $X_{t,5}$ & $Y_{t,6}$ & $X_{t,6}$ & $Y_{t,7}$ & $X_{t,7}$\\
        $A_2$ & $X_{s,1}$ & $Y_{s,1}$ & $0_s$ & $Y_{s,0}$ & $X_{s,3}$ & $Y_{s,3}$ & $X_{s,2}$ & $Y_{s,2}$ & $X_{s,5}$ & $Y_{s,5}$ & $X_{s,4}$ & $Y_{s,4}$ & $X_{s,7}$ & $Y_{s,7}$ & $X_{s,6}$ & $Y_{s,6}$\\
        $B_2$ & $Y_{t,1}$ & $X_{t,1}$ & $Y_{t,0}$ & $0_t$ & $Y_{t,3}$ & $X_{t,3}$ & $Y_{t,2}$ & $X_{t,2}$ & $Y_{t,5}$ & $X_{t,5}$ & $Y_{t,4}$ & $X_{t,4}$ & $Y_{t,7}$ & $X_{t,7}$ & $Y_{t,6}$ & $X_{t,6}$\\ \hline
        $A_3$ & $X_{s,2}$ & $Y_{s,2}$ & $X_{s,3}$ & $Y_{s,3}$ & $0_s$ & $Y_{s,0}$ & $X_{s,1}$ & $Y_{s,1}$ & $X_{s,6}$ & $Y_{s,6}$ & $X_{s,7}$ & $Y_{s,7}$ & $X_{s,4}$ & $Y_{s,4}$ & $X_{s,5}$ & $Y_{s,5}$ \\
        $B_3$ & $Y_{t,2}$ & $X_{t,2}$ & $Y_{t,3}$ & $X_{t,3}$ & $Y_{t,0}$ & $0_t$ & $Y_{t,1}$ & $X_{t,1}$ & $Y_{t,6}$ & $X_{t,6}$ & $Y_{t,7}$ & $X_{t,7}$ & $Y_{t,4}$ & $X_{t,4}$ & $Y_{t,5}$ & $X_{t,5}$\\
        $A_4$ & $X_{s,3}$ & $Y_{s,3}$ & $X_{s,2}$ & $Y_{s,2}$ & $X_{s,1}$ & $Y_{s,1}$ & $0_s$ & $Y_{s,0}$ & $X_{s,7}$ & $Y_{s,7}$ & $X_{s,6}$ & $Y_{s,6}$ & $X_{s,5}$ & $Y_{s,5}$ & $X_{s,4}$ & $Y_{s,4}$\\
        $B_4$ & $Y_{t,3}$ & $X_{t,3}$ & $Y_{t,2}$ & $X_{t,2}$ & $Y_{t,1}$ & $X_{t,1}$ & $Y_{t,0}$ & $0_t$ & $Y_{t,7}$ & $X_{t,7}$ & $Y_{t,6}$ & $X_{t,6}$ & $Y_{t,5}$ & $X_{t,5}$ & $Y_{t,4}$ & $X_{t,4}$ \\ \hline
        $A_5$ & $X_{s,4}$ & $Y_{s,4}$ & $X_{s,5}$ & $Y_{s,5}$ & $X_{s,6}$ & $Y_{s,6}$ & $X_{s,7}$ & $Y_{s,7}$ & $0_s$ & $Y_{s,0}$ & $X_{s,1}$ & $Y_{s,1}$ & $X_{s,2}$ & $Y_{s,2}$ & $X_{s,3}$ & $Y_{s,3}$\\  
        $B_5$ & $Y_{t,4}$ & $X_{t,4}$ & $Y_{t,5}$ & $X_{t,5}$ & $Y_{t,6}$ & $X_{t,6}$ & $Y_{t,7}$ & $X_{t,7}$ & $Y_{t,0}$ & $0_t$ & $Y_{t,1}$ & $X_{t,1}$ & $Y_{t,2}$ & $X_{t,2}$ & $Y_{t,3}$ & $X_{t,3}$\\
        $A_6$ & $X_{s,5}$ & $Y_{s,5}$ & $X_{s,4}$ & $Y_{s,4}$ & $X_{s,7}$ & $Y_{s,7}$ & $X_{s,6}$ & $Y_{s,6}$ & $X_{s,1}$ & $Y_{s,1}$ & $0_s$ & $Y_{s,0}$ & $X_{s,3}$ & $Y_{s,3}$ & $X_{s,2}$ & $Y_{s,2}$\\
        $B_6$ & $Y_{t,5}$ & $X_{t,5}$ & $Y_{t,4}$ & $X_{t,4}$ & $Y_{t,7}$ & $X_{t,7}$ & $Y_{t,6}$ & $X_{t,6}$ & $Y_{t,1}$ & $X_{t,1}$ & $Y_{t,0}$ & $0_t$ & $Y_{t,3}$ & $X_{t,3}$ & $Y_{t,2}$ & $X_{t,2}$\\ \hline
        $A_7$ & $X_{s,6}$ & $Y_{s,6}$ & $X_{s,7}$ & $Y_{s,7}$ & $X_{s,4}$ & $Y_{s,4}$ & $X_{s,5}$ & $Y_{s,5}$ & $X_{s,2}$ & $Y_{s,2}$ & $X_{s,3}$ & $Y_{s,3}$ & $0_s$ & $Y_{s,0}$ & $X_{s,1}$ & $Y_{s,1}$ \\
        $B_7$ & $Y_{t,6}$ & $X_{t,6}$ & $Y_{t,7}$ & $X_{t,7}$ & $Y_{t,4}$ & $X_{t,4}$ & $Y_{t,5}$ & $X_{t,5}$ & $Y_{t,2}$ & $X_{t,2}$ & $Y_{t,3}$ & $X_{t,3}$ & $Y_{t,0}$ & $0_t$ & $Y_{t,1}$ & $X_{t,1}$ \\
        $A_8$ & $X_{s,7}$ & $Y_{s,7}$ & $X_{s,6}$ & $Y_{s,6}$ & $X_{s,5}$ & $Y_{s,5}$ & $X_{s,4}$ & $Y_{s,4}$ & $X_{s,3}$ & $Y_{s,3}$ & $X_{s,2}$ & $Y_{s,2}$ & $X_{s,1}$ & $Y_{s,1}$ & $0_s$ & $Y_{s,0}$ \\
        $B_8$ & $Y_{t,7}$ & $X_{t,7}$ & $Y_{t,6}$ & $X_{t,6}$ & $Y_{t,5}$ & $X_{t,5}$ & $Y_{t,4}$ & $X_{t,4}$ & $Y_{t,3}$ & $X_{t,3}$ & $Y_{t,2}$ & $X_{t,2}$ & $Y_{t,1}$ & $X_{t,1}$ & $Y_{t,0}$ & $0_t$ \\  \hline
    \end{tabular}}
    \caption{Matrix showing the edge colors of $K_{s,t,\ldots,s,t}$ with $2^4 = 16$ parts.}
    \label{tab:K_big}
\end{table}

Now, each row of the coloring matrix has one occurrence of each of the matrices in either the sequence $Y_{s,0},X_{s,1},Y_{s,1},\dots,X_{s,r},Y_{s,r}$ or the sequence $Y_{t,0},X_{t,1},Y_{t,1},\dots,X_{t,r},Y_{t,r}$ for $r = \frac{m}{2}-1$ and, as described above, this corresponds to an integral interval of colors at the edges incident to each vertex. Thus, we have constructed an interval coloring of $K_{s,t,s,t,\dots,s,t}$ with $m$ parts. This completes the proof.
\end{proof}

Observe that the family of complete multipartite graphs from Theorem~\ref{thm:multipartite-special} provides complete multipartite graphs with arbitrarily many parts that are interval colorable.

\section{\texorpdfstring{$k$}{k}-trees}

A \emph{$k$-tree} is a graph obtained from $K_{k+1}$ by repeatedly adding vertices in such a way that each added vertex $v$ has exactly $k$ neighbors $U$, where $U$ is a clique. For example, 1-trees are exactly trees, 2-trees are maximal series-parallel graphs, and include the maximal outerplanar graphs. Planar 3-trees are also known as Apollonian networks.

Every tree has impropriety $1$, by Theorem~\ref{thm:forest}.
In this section, we study the impropriety of $2$-trees and of a subfamily of planar $3$-trees.

\subsection{2-trees}

We begin by determining an upper bound on the impropriety of a general 2-tree, before restricting our focus to two particular subclasses of 2-trees.

\begin{theorem}
\label{thm:2-tree}
    If $G$ is a 2-tree with $\Delta(G) \geq 3$, then $\im{G} \leq \left \lceil \frac{\Delta(G)}{3} \right \rceil$.
\end{theorem}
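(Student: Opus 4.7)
The plan is to prove Theorem~\ref{thm:2-tree} by induction on $n = |V(G)|$, exploiting the recursive construction of $2$-trees. Set $k = \lceil \Delta(G)/3\rceil$. The base case is $K_4 - e$ on four vertices---a short degree count shows this is the unique $2$-tree with $\Delta = 3$---which admits a proper interval coloring using colors $\{1,2,3\}$, so $\im{K_4 - e} = 1 = \lceil 3/3\rceil$. For $\Delta \geq 4$ we have $k \geq 2$, which provides useful slack for the extension step below.

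For the inductive step, every $2$-tree on $n \geq 5$ vertices contains a \emph{simplicial} degree-$2$ vertex $v$ whose two neighbors $u, w$ are adjacent (the vertex added last in some construction order of $G$). Let $G' = G - v$; since $n \geq 5$, we have $\Delta(G') \geq 3$, so by induction $G'$ admits a $\lceil\Delta(G')/3\rceil$-improper (hence $k$-improper) interval coloring $\phi'$. It remains to extend $\phi'$ by assigning colors to the two new edges $uv$ and $vw$. Write $I_u = [\ell_u, r_u]$ and $I_w = [\ell_w, r_w]$ for the color intervals at $u$ and $w$ under $\phi'$. Since $\phi'(uw) \in I_u \cap I_w$, these intervals overlap. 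The primary strategy is to select $\phi(uv) \in \{\ell_u - 1, r_u + 1\}$ and $\phi(vw) \in \{\ell_w - 1, r_w + 1\}$: such ``fresh'' boundary colors preserve the interval property and the $k$-impropriety at $u$ and $w$ automatically. The further requirement $|\phi(uv) - \phi(vw)| \leq 1$ forces the color set at $v$ to be an interval, and a short case check shows that among the four candidate pairs, only the symmetric ones (both ``left'' or both ``right'') can possibly satisfy it; these work iff $|\ell_u - \ell_w| \leq 1$ or $|r_u - r_w| \leq 1$, respectively. When both of these fail, we fall back on re-using an interior color of $I_u$, of $I_w$, or of $I_u \cap I_w$ whose multiplicity at the corresponding vertex is strictly less than $k$; such a color exists because $k \geq 2$ and a pigeonhole count using $|I_u| \geq \lceil d_{G'}(u)/k\rceil$ (and the analogous bound at $w$) leaves room.

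The main obstacle is handling precisely the configurations in which both fresh-boundary extensions fail, i.e.\ when the intervals $I_u$ and $I_w$ are shifted by more than $1$ at both endpoints. Verifying that an interior re-use is always compatible with the interval-at-$v$ constraint in these cases is the crux of the argument. The cleanest way to push the induction through is probably to strengthen the inductive hypothesis with a structural invariant---for example, that at each vertex at least one boundary color of the interval has multiplicity strictly below $k$ in the coloring of $G'$---so that the fresh-extension strategy is always available and the degenerate cases above never arise.
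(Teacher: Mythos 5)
Your skeleton (induction via removal of a simplicial $2$-vertex $v$ with adjacent neighbours $u,w$, then extending the coloring to $uv$ and $vw$) matches the paper's, but the extension step---which is the entire content of the theorem---is not actually carried out. You correctly observe that the four ``fresh boundary'' assignments fail exactly when $|\ell_u-\ell_w|\ge 2$ and $|r_u-r_w|\ge 2$ (e.g.\ $I_u=[1,10]$, $I_w=[4,6]$), and you then defer to ``re-using an interior color'' without showing that one can find $c_u$ valid at $u$ and $c_w$ valid at $w$ with $|c_u-c_w|\le 1$, which is the whole difficulty: a color of multiplicity $<k$ in $I_u$ may sit far from every such color in $I_w$, and when $|I_u|\le 2$ every interior color may already have multiplicity $k$. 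Your closing suggestion to strengthen the inductive hypothesis with a boundary-multiplicity invariant is speculative, is not verified to be maintainable through the induction, and in any case does not by itself resolve the alignment problem between $u$ and $w$. So as written this is a genuine gap, and you have identified it yourself as ``the crux.''

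The paper closes exactly this gap by anchoring the analysis at $x=\varphi(uw)$ rather than at the interval endpoints. Every color in $\{x-1,x,x+1\}$ automatically preserves the interval property at \emph{both} $u$ and $w$, because $x$ lies in both intervals. Since $d_{G-v}(u)\le \Delta(G)-1 < 3\left\lceil \Delta(G)/3\right\rceil$, the pigeonhole principle gives a color $x_u\in\{x-1,x,x+1\}$ of multiplicity at most $\left\lceil \Delta(G)/3\right\rceil-1$ at $u$, and likewise $x_w$ at $w$. The only problematic outcome is $\{x_u,x_w\}=\{x-1,x+1\}$; say $x_u=x-1$. Then $x-1$ must have high multiplicity at $w$, so in particular $x-1$ lies in $I_w$, hence all of $\{x-2,x-1,x\}$ are interval-compatible at $w$, and a second pigeonhole on this shifted triple yields a low-multiplicity color $x'_w$ at $w$ with $|x_u-x'_w|\le 1$. (The cases $\Delta(G)\in\{4,5\}$ are dispatched separately via Theorem~\ref{thm:delta5}.) If you want to salvage your write-up, replacing the endpoint-based case analysis with this $\{x-1,x,x+1\}$ pigeonhole is the missing idea; no strengthened inductive invariant is needed.
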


\begin{proof}
Let $G$ be a minimal counterexample to the statement with respect to the number of vertices. 
So $G$ is a $2$-tree with $\Delta(G) \ge 3$ and $\im{G} > \left\lceil \frac{\Delta(G)}{3} \right\rceil$.
If $\Delta(G) = 3$, then $G$ is a diamond graph which satisfies $\im{G} = 1 = \left\lceil \frac{3}{3} \right\rceil$, so it is not a counterexample; see Figure~\ref{fig:diamond}.

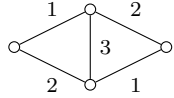
\begin{figure}[h!]
    \centering
    \begin{tikzpicture}
    \node[circle, draw=black, fill=white, inner sep =0.05cm](y) at (0,0){};
    \node[circle, draw=black, fill=white, inner sep =0.05cm](x) at (0,-1){};
    \node[circle, draw=black, fill=white, inner sep =0.05cm](v) at (1,-0.5){};
    \node[circle, draw=black, fill=white, inner sep =0.05cm](w) at (-1,-0.5){};
    \draw (x) -- (y);
    \draw (y) -- (v) -- (x) -- (w) -- (y);
    \node at (0.2,-0.5){\footnotesize{$3$}};
    \node at (-0.5,0){\footnotesize{$1$}};
    \node at (-0.5,-1){\footnotesize{$2$}};
    \node at (0.6,0){\footnotesize{$2$}};
    \node at (0.6,-1){\footnotesize{$1$}};
    \end{tikzpicture}
    \caption{A diamond graph and its interval coloring with impropriety $1$.}
    \label{fig:diamond}
\end{figure}

If $4 \le \Delta(G) \le 5$, then by Theorem~\ref{thm:delta5}, $\im{G} \le 2 = \left\lceil \frac{\Delta(G)}{3} \right\rceil$. Therefore, $\Delta(G) \ge 6$ for the minimal counterexample $G$.

Let $v$ be a vertex of degree 2 in $G$. Note that $v$ can be considered as the last vertex added to $G$ in the process of obtaining a 2-tree, so $N(v) \cong K_2$.
Let $N(v) = \{u,w\}$ and $uw \in E(G)$.
Since $v$ is the last vertex added to $G$, $G-v$ is also a $2$-tree.
Since $\Delta(G) \ge 6$, we have $\Delta(G-v) \ge 5$ and $\im{G} \le \left\lceil \frac{\Delta(G-v)}{3} \right\rceil$ by the minimality of $G$.
Let $\varphi$ be a $k$-improper interval coloring of $G-v$, where $k \leq \left\lceil \frac{\Delta(G-v)}{3} \right\rceil$.

Now, we can construct the coloring $\varphi'$ of $E(G)$ in the following way. First, for $e \in E(G-v)$, we let $\varphi'(e) = \varphi(e)$. 
Let $x = \varphi(uw)=\varphi'(uw)$. 
Then at least one of the colors in $\{x-1, x, x+1\}$ appears at most $\left\lceil \frac{\Delta(G)}{3} \right\rceil -1$ times on the edges incident to $u$ (and by symmetry, to $w$).
Let $x_u$ (resp. $x_w$) be a color in $\{x-1, x, x+1\}$ that appears at most $\left\lceil \frac{\Delta(G)}{3} \right\rceil -1$ times on the edges incident to $u$ (resp. $w$).

 If $x_u = x$, then assigning $\varphi'(uv) = x_u$ and $\varphi'(wv) = x_w$ provides us a $\left\lceil \frac{\Delta(G)}{3} \right\rceil$-improper interval coloring of $G$, which is a contradiction.
 
 Suppose $x_u = x-1$ (resp. $x_u=x+1$).  
If $x_w = x-1$ (resp. $x_w = x+1$), then assigning $\varphi'(uv) = x_u$ and $\varphi'(wv) = x_w$ provides us a $\left\lceil \frac{\Delta(G)}{3} \right\rceil$-improper interval coloring of $G$, which is a contradiction.
If $x_w \neq x-1$ (resp. $x_w \neq x+1$), then that means $x-1$ (resp. $x+1$) appears on an edge incident with $w$. 
Thus, there is at least one color in $\{x-2, x-1, x\}$ (resp. $\{x+2, x+1, x\}$) appearing at most $\left\lceil \frac{\Delta(G)}{3} \right\rceil -1$ times on the edges incident to $w$.
Let this color be $x'_w$. 
Then assigning $\varphi'(uv) = x_u$ and $\varphi'(wv) = x'_w$ provides us  a $\left\lceil \frac{\Delta(G)}{3} \right\rceil$-improper interval coloring of $G$, which is a contradiction.
This concludes the proof.
\end{proof}

Note that the smallest 2-tree and the only one  with $\Delta(G) = 2$ is $K_3$, and $\im{G} = 2 > 1 = \left \lceil \frac{2}{3} \right \rceil$. Thus the condition that $\Delta(G) \geq 3$ in the preceding theorem is justified. Moreover, we were not able to find any 2-tree with impropriety strictly greater than $2$. On the other hand, there are clearly 2-trees with impropriety 2, for example $K_3$. Thus we pose the following conjecture. 

\begin{conjecture}
    \label{conj:2-trees}
    If G is a 2-tree, then $\mu_{int}(G)\leq 2$.
\end{conjecture}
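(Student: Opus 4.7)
The plan is to prove Conjecture~\ref{conj:2-trees} by strong induction on $|V(G)|$, starting from the base case $G = K_3$, for which $\mu_{int}(K_3) = 2$ since $K_3$ is a regular graph of Class~$2$. For the inductive step, take a degree-$2$ vertex $v$ of $G$, identified with the last vertex added in the standard construction of a $2$-tree, with $N(v) = \{u, w\}$ and $uw \in E(G)$. The inductive hypothesis supplies a $2$-improper interval coloring $\varphi$ of $G - v$, and the task is to extend $\varphi$ to $G$ by assigning colors $c_u := \varphi(uv)$ and $c_w := \varphi(vw)$.

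Write $x = \varphi(uw)$ and let $[a_u, b_u]$ and $[a_w, b_w]$ denote the intervals of colors appearing on edges incident with $u$ and $w$ under $\varphi$. The chosen $c_u, c_w$ must satisfy $c_u \in [a_u - 1, b_u + 1]$ and $c_w \in [a_w - 1, b_w + 1]$ (to preserve the interval property at $u$ and $w$), $|c_u - c_w| \leq 1$ (to produce an interval at $v$), and each of $c_u, c_w$ must have multiplicity at most $1$ at the corresponding endpoint in $\varphi$ (so that the extension remains $2$-improper). The easiest case is when $x$ has multiplicity exactly $1$ at both $u$ and $w$; then $c_u = c_w = x$ works. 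When $x$ has multiplicity $2$ at only one of $u, w$ and at least one of $x - 1, x + 1$ has multiplicity at most $1$ at that endpoint, a greedy choice from $\{x - 1, x, x + 1\}$ still suffices.

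The main obstacle is that an arbitrary $2$-improper interval coloring of $G - v$ need not extend to $G$. If, for instance, every color in $[a_u, b_u]$ has multiplicity $2$ at $u$ (possible at a high-degree vertex whose interval is short, which is precisely the regime $\Delta(G) \geq 6$ left open by Theorem~\ref{thm:delta5}), then only $c_u \in \{a_u - 1, b_u + 1\}$ is legal at $u$, and neither candidate need lie within distance $1$ of any legal $c_w$ at $w$. To circumvent this, I would strengthen the inductive hypothesis with a structural invariant---for example, that for every edge $ab$ the color $\varphi(ab)$ has multiplicity $1$ at at least one of $a, b$, or that the boundary colors $a_y, b_y$ of each vertex's interval appear with multiplicity $1$---and permit local recolorings of $\varphi$, in the spirit of a Kempe-chain swap of $x$ and $x \pm 1$ along a carefully chosen path in $G - v$, to convert an obstructing coloring into one that extends before adjoining $v$.

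The bulk of the argument would then be a case analysis on the local multiset of colors at $u$ and $w$ around $x$, showing that each obstructing case admits a suitable local modification of $\varphi$ and verifying that the chosen invariant is preserved after the extension. I expect the principal difficulty to be identifying an invariant that is simultaneously weak enough to be maintained through every inductive step and strong enough to guarantee extendability; a secondary challenge is showing that the required Kempe-type swaps can be confined to a bounded neighborhood of $u$ and $w$, exploiting the series-parallel structure and bounded tree-width of $2$-trees to keep the modifications local and prevent cascading failures farther back in the induction.
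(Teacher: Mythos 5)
You are attempting to prove a statement that the paper itself leaves open (Conjecture~\ref{conj:2-trees}); the paper offers only partial evidence for it, namely the bound $\im{G} \leq \left\lceil \frac{\Delta(G)}{3}\right\rceil$ of Theorem~\ref{thm:2-tree} and the verification for squares of paths and for 2-paths. Your proposal is not a proof: it is an outline whose central step is explicitly missing. The induction on a degree-2 vertex $v$ with $N(v)=\{u,w\}$ is exactly the framework the paper uses for Theorem~\ref{thm:2-tree}, and it is worth seeing why that argument stops at $\left\lceil \frac{\Delta(G)}{3}\right\rceil$ rather than $2$: the pigeonhole step guarantees that some color in $\{x-1,x,x+1\}$ has multiplicity at most $k-1$ at $u$ only when $d_{G-v}(u)\leq 3k-1$, so with an impropriety budget of $k=2$ the greedy extension is guaranteed to succeed only when the neighbors of $v$ have degree at most $6$ in $G$ --- essentially the regime already handled by Theorem~\ref{thm:delta5}. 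For larger degrees an arbitrary 2-improper interval coloring of $G-v$ can have every color of multiplicity $2$ at $u$ and at $w$, and then no legal extension exists, as you yourself observe.

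You correctly diagnose this obstruction, but your proposed remedy --- strengthening the inductive hypothesis with an invariant (every edge color has multiplicity $1$ at one of its endpoints, or the boundary colors of each vertex's interval have multiplicity $1$) and repairing obstructing colorings with Kempe-type swaps --- is left entirely unverified. You do not show that either candidate invariant holds in the base case $K_3$, is preserved when $uv$ and $vw$ are colored, or is even consistent with the extension you want to perform; nor do you show that a swap of colors $x$ and $x\pm 1$ along a path preserves the interval property at the internal vertices of that path (it can fail at a vertex whose color interval has $x$ or $x\pm 1$ as an endpoint, forcing exactly the kind of cascading repair you hope to avoid). Since the entire difficulty of the conjecture is concentrated in the step you defer, the proposal should be read as a research plan rather than a proof; as written it establishes nothing beyond what Theorem~\ref{thm:2-tree} and Theorem~\ref{thm:delta5} already give.
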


In the following we provide a better bound for two subfamilies of 2-trees, supporting Conjecture~\ref{conj:2-trees}.

For $n \geq 1$, the \emph{square of a path $P_n$} is the graph G with vertices $V(G)=V(P_n)$ and edges $E(G)=\{uv \colon d_{P_n} (u,v) \leq 2\}$. Observe that the unique 2-tree on 4 vertices (a diamond) is the square of $P_4$. Notice also that if $n \ge 4$, then $G$ contains a vertex $v$ of degree 2, such that $N_G(v) = \{x,y\}$, $xy$ is an edge of $G$, $d_{G-v}(x) = 2$ and $d_{G-v}(y) = 3$. 

\begin{proposition}
\label{prop:2-paths}
If $G$ is the square of a path $P_n$, $n \geq 4$, then $\im{G} = 1$.
\end{proposition}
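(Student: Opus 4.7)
The plan is to proceed by induction on $n$. For the base case $n = 4$, the graph $G$ is the diamond, whose interval coloring with three colors appears in \cref{fig:diamond}, so $\im{G} = 1$. For the inductive step, let $n \ge 5$ and suppose the square of $P_{n-1}$ admits an interval edge coloring $\varphi$. Invoking the observation preceding the proposition, the last vertex $v = v_n$ has degree $2$ with $N_G(v) = \{x, y\}$, where $x = v_{n-1}$ satisfies $d_{G-v}(x) = 2$, $y = v_{n-2}$ satisfies $d_{G-v}(y) = 3$, and $xy \in E(G)$; note that $G - v$ is the square of $P_{n-1}$. Write the two colors at $x$ in $\varphi$ as the interval $\{B, B+1\}$ and the three colors at $y$ as $\{A, A+1, A+2\}$. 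Since $\varphi(xy)$ lies in both sets, their intersection is nonempty, forcing $A - B \in \{-2, -1, 0, 1\}$.

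I would extend $\varphi$ to $G$ by choosing the two new colors on $vx$ and $vy$. Preserving the interval property at $x$ forces $\varphi'(vx) \in \{B-1, B+2\}$, and similarly $\varphi'(vy) \in \{A-1, A+3\}$. Both candidate values lie outside the existing color sets at $x$ and $y$, so the resulting coloring is automatically proper at those two vertices. The only remaining constraint is that $v$'s two colors differ by exactly $1$, so that its color set is a $2$-element integral interval.

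The main (though mild) obstacle is a short case split on $A - B$ showing that a compatible pair can always be chosen. If $A - B \in \{-1, 1\}$, take $(\varphi'(vx), \varphi'(vy)) = (B-1, A-1)$, whose difference is $B - A \in \{-1, 1\}$. If instead $A - B \in \{-2, 0\}$, take $(\varphi'(vx), \varphi'(vy)) = (B+2, A+3)$, whose difference is $B - A - 1 \in \{-1, 1\}$. In either branch, $\varphi'$ is a proper interval coloring of $G$, completing the induction and yielding $\im{G} = 1$ for all $n \ge 4$.
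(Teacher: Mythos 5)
Your proof is correct and follows essentially the same route as the paper: induction on $n$, peeling off the degree-$2$ end vertex $v$ with $d_{G-v}(x)=2$ and $d_{G-v}(y)=3$, and extending an interval coloring of the square of $P_{n-1}$ by choosing colors on $vx$ and $vy$ just outside the existing intervals at $x$ and $y$. The only difference is cosmetic: the paper enumerates the six possible positions of $\varphi(xy)$ in the two intervals via a figure, whereas you parametrize by $A-B$ and dispatch all cases with two uniform formulas, which is a tidier way to organize the same case analysis.
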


\begin{proof}
Let $G$ be the minimal counterexample with respect to the number of vertices. 
Then $|V(G)| \ge 5$ since the diamond graph is interval colorable.
Let $v$ be a vertex of degree 2, such that $N_G(v) = \{x,y\}$,
$xy\in E(G)$, $d_{G-v}(x) = 2$ and $d_{G-v}(y) = 3$. 
Notice that $G-v$ is the square of $P_{n-1}$. Then, by the minimality of $G$, $\im{G-v} = 1$. Let $\varphi$ be an interval coloring of $G-v$, and let $\varphi(xy) = a$. By a short case analysis, the coloring of $G-v$ can be extended by appropriately coloring edges $vx$ and $vy$ to give $\im{G} = 1$. See Figure \ref{fig:2-paths} for the analysis of different cases.
\begin{figure}[ht]
\centering
\begin{tikzpicture}
\begin{scope}
    \node[circle, draw=black, fill=white, inner sep =0.05cm, label={90:$y$}](y) at (0,0){};
    \node[circle, draw=black, fill=white, inner sep =0.05cm, label={-90:$x$}](x) at (0,-1){};
    \node[circle, draw=black, fill=black, inner sep =0.05cm, label={0:$v$}](v) at (1,-0.5){};
    \draw (x) -- (y);
    \draw (y) -- (v) -- (x);
    \draw (-1,0.5) -- (y) -- (-1,-0.5);
    \draw (-1,-1.5) -- (x);
    \node at (0.2,-0.5){\footnotesize{$a$}};
    \node at (-0.5,0.6){\footnotesize{$a+2$}};
    \node at (-0.5,-0.6){\footnotesize{$a+1$}};
    \node at (-0.5,-1.6){\footnotesize{$a-1$}};
    \node at (0.6,0){\footnotesize{\boldmath$a-1$}};
    \node at (0.6,-1){\footnotesize{\boldmath$a-2$}};
\end{scope}

\begin{scope}[yshift = 3.5cm]
    \node[circle, draw=black, fill=white, inner sep =0.05cm, label={90:$y$}](y) at (0,0){};
    \node[circle, draw=black, fill=white, inner sep =0.05cm, label={-90:$x$}](x) at (0,-1){};
    \node[circle, draw=black, fill=black, inner sep =0.05cm, label={0:$v$}](v) at (1,-0.5){};
    \draw (x) -- (y);
    \draw (y) -- (v) -- (x);
    \draw (-1,0.5) -- (y) -- (-1,-0.5);
    \draw (-1,-1.5) -- (x);
    \node at (0.2,-0.5){\footnotesize{$a$}};
    \node at (-0.5,0.6){\footnotesize{$a+2$}};
    \node at (-0.5,-0.6){\footnotesize{$a+1$}};
    \node at (-0.5,-1.6){\footnotesize{$a+1$}};
    \node at (0.6,0){\footnotesize{\boldmath$a+3$}};
    \node at (0.6,-1){\footnotesize{\boldmath$a+2$}};
\end{scope}

\begin{scope}[xshift=4cm]
    \node[circle, draw=black, fill=white, inner sep =0.05cm, label={90:$y$}](y) at (0,0){};
    \node[circle, draw=black, fill=white, inner sep =0.05cm, label={-90:$x$}](x) at (0,-1){};
    \node[circle, draw=black, fill=black, inner sep =0.05cm, label={0:$v$}](v) at (1,-0.5){};
    \draw (x) -- (y);
    \draw (y) -- (v) -- (x);
    \draw (-1,0.5) -- (y) -- (-1,-0.5);
    \draw (-1,-1.5) -- (x);
    \node at (0.2,-0.5){\footnotesize{$a$}};
    \node at (-0.5,0.6){\footnotesize{$a+1$}};
    \node at (-0.5,-0.6){\footnotesize{$a-1$}};
    \node at (-0.5,-1.6){\footnotesize{$a-1$}};
    \node at (0.6,0){\footnotesize{\boldmath$a+2$}};
    \node at (0.6,-1){\footnotesize{\boldmath$a+1$}};
\end{scope}

\begin{scope}[xshift=4cm, yshift = 3.5cm]
    \node[circle, draw=black, fill=white, inner sep =0.05cm, label={90:$y$}](y) at (0,0){};
    \node[circle, draw=black, fill=white, inner sep =0.05cm, label={-90:$x$}](x) at (0,-1){};
    \node[circle, draw=black, fill=black, inner sep =0.05cm, label={0:$v$}](v) at (1,-0.5){};
    \draw (x) -- (y);
    \draw (y) -- (v) -- (x);
    \draw (-1,0.5) -- (y) -- (-1,-0.5);
    \draw (-1,-1.5) -- (x);
    \node at (0.2,-0.5){\footnotesize{$a$}};
    \node at (-0.5,0.6){\footnotesize{$a+1$}};
    \node at (-0.5,-0.6){\footnotesize{$a-1$}};
    \node at (-0.5,-1.6){\footnotesize{$a+1$}};
    \node at (0.6,0){\footnotesize{\boldmath$a-2$}};
    \node at (0.6,-1){\footnotesize{\boldmath$a-1$}};
\end{scope}

\begin{scope}[xshift=8cm]
    \node[circle, draw=black, fill=white, inner sep =0.05cm, label={90:$y$}](y) at (0,0){};
    \node[circle, draw=black, fill=white, inner sep =0.05cm, label={-90:$x$}](x) at (0,-1){};
    \node[circle, draw=black, fill=black, inner sep =0.05cm, label={0:$v$}](v) at (1,-0.5){};
    \draw (x) -- (y);
    \draw (y) -- (v) -- (x);
    \draw (-1,0.5) -- (y) -- (-1,-0.5);
    \draw (-1,-1.5) -- (x);
    \node at (0.2,-0.5){\footnotesize{$a$}};
    \node at (-0.5,0.6){\footnotesize{$a-1$}};
    \node at (-0.5,-0.6){\footnotesize{$a-2$}};
    \node at (-0.5,-1.6){\footnotesize{$a-1$}};
    \node at (0.6,0){\footnotesize{\boldmath$a-3$}};
    \node at (0.6,-1){\footnotesize{\boldmath$a-2$}};
\end{scope}

\begin{scope}[xshift=8cm, yshift = 3.5cm]
    \node[circle, draw=black, fill=white, inner sep =0.05cm, label={90:$y$}](y) at (0,0){};
    \node[circle, draw=black, fill=white, inner sep =0.05cm, label={-90:$x$}](x) at (0,-1){};
    \node[circle, draw=black, fill=black, inner sep =0.05cm, label={0:$v$}](v) at (1,-0.5){};
    \draw (x) -- (y);
    \draw (y) -- (v) -- (x);
    \draw (-1,0.5) -- (y) -- (-1,-0.5);
    \draw (-1,-1.5) -- (x);
    \node at (0.2,-0.5){\footnotesize{$a$}};
    \node at (-0.5,0.6){\footnotesize{$a-1$}};
    \node at (-0.5,-0.6){\footnotesize{$a-2$}};
    \node at (-0.5,-1.6){\footnotesize{$a+1$}};
    \node at (0.6,0){\footnotesize{\boldmath$a+1$}};
    \node at (0.6,-1){\footnotesize{\boldmath$a+2$}};
\end{scope}

\end{tikzpicture}
        \caption{Case analysis for Proposition \ref{prop:2-paths}.}
        \label{fig:2-paths}
    \end{figure}
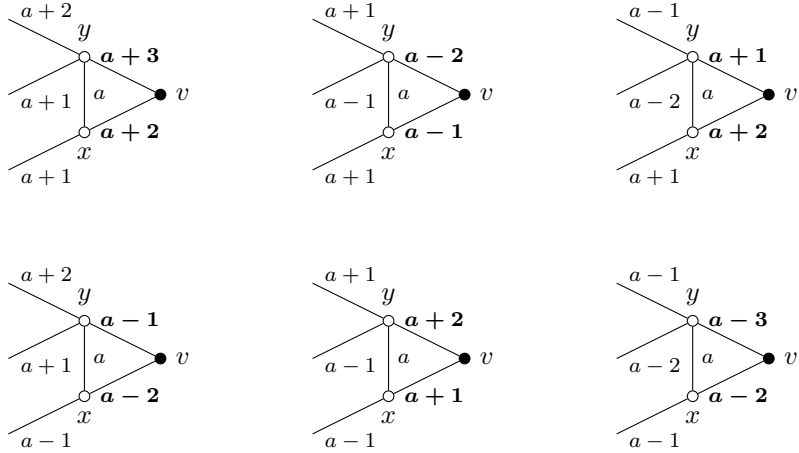
\end{proof}

Note that squares of paths are a special case of 2-paths \cite{k-trees-2006, k-trees-1984}. A \emph{2-path} is an alternating sequence of distinct edges and triangles $(e_0, t_1, e_1, t_2, \ldots, e_n)$, starting and ending with an edge, such that $t_i$ contains exactly two of the distinct edges in the sequence, $e_{i-1}$ and $e_i$. For example, see Figure \ref{fig:2-path-example}.

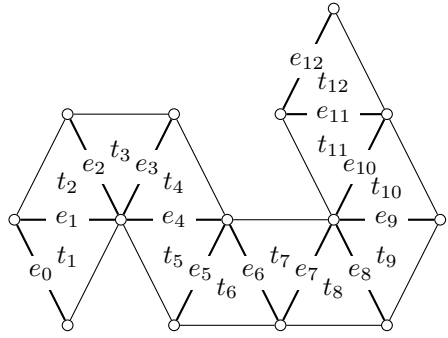
\begin{figure}
    \centering
    \begin{tikzpicture}[scale=0.7]
        \node[circle, draw=black, fill=white, inner sep =0.05cm](a) at (0,0){};
        \node[circle, draw=black, fill=white, inner sep =0.05cm](b) at (1,2){};
        \node[circle, draw=black, fill=white, inner sep =0.05cm](c) at (-1,2){};
        \node[circle, draw=black, fill=white, inner sep =0.05cm](d) at (0,4){};
        \node[circle, draw=black, fill=white, inner sep =0.05cm](e) at (2,4){};
        \node[circle, draw=black, fill=white, inner sep =0.05cm](f) at (3,2){};
        \node[circle, draw=black, fill=white, inner sep =0.05cm](g) at (2,0){};
        \node[circle, draw=black, fill=white, inner sep =0.05cm](h) at (4,0){};
        \node[circle, draw=black, fill=white, inner sep =0.05cm](i) at (5,2){};
        \node[circle, draw=black, fill=white, inner sep =0.05cm](j) at (6,0){};
        \node[circle, draw=black, fill=white, inner sep =0.05cm](k) at (7,2){};
        \node[circle, draw=black, fill=white, inner sep =0.05cm](l) at (6,4){};
        \node[circle, draw=black, fill=white, inner sep =0.05cm](m) at (4,4){};
        \node[circle, draw=black, fill=white, inner sep =0.05cm](n) at (5,6){};

        \draw[thick] (a) -- (c) node [midway, fill=white] {$e_0$};
        \draw[thick] (c) -- (b) node [midway, fill=white] {$e_1$};
        \draw[thick] (d) -- (b) node [midway, fill=white] {$e_2$};
        \draw[thick] (e) -- (b) node [midway, fill=white] {$e_3$};
        \draw[thick] (f) -- (b) node [midway, fill=white] {$e_4$};
        \draw[thick] (f) -- (g) node [midway, fill=white] {$e_5$};
        \draw[thick] (h) -- (f) node [midway, fill=white] {$e_6$};
        \draw[thick] (h) -- (i) node [midway, fill=white] {$e_7$};
        \draw[thick] (i) -- (j) node [midway, fill=white] {$e_8$};
        \draw[thick] (i) -- (k) node [midway, fill=white] {$e_9$};
        \draw[thick] (i) -- (l) node [midway, fill=white] {$e_{10}$};
        \draw[thick] (l) -- (m) node [midway, fill=white] {$e_{11}$};
        \draw[thick] (n) -- (m) node [midway, fill=white] {$e_{12}$};

        \draw (a) -- (b);
        \draw (c) -- (d);
        \draw (d) -- (e);
        \draw (e) -- (f);
        \draw (b) -- (g);
        \draw (g) -- (h);
        \draw (f) -- (i);
        \draw (h) -- (j);
        \draw (j) -- (k);
        \draw (k) -- (l);
        \draw (l) -- (n);
        \draw (i) -- (m);

        \node (1) at (0,1.3){$t_1$};
        \node (2) at (0,2.7){$t_2$};
        \node (3) at (1,3.3){$t_3$};
        \node (4) at (2,2.7){$t_4$};
        \node (5) at (2,1.3){$t_5$};
        \node (6) at (3,0.7){$t_6$};
        \node (7) at (4,1.3){$t_7$};
        \node (8) at (5,0.7){$t_8$};
        \node (9) at (6,1.3){$t_9$};
        \node (10) at (6,2.6){$t_{10}$};
        \node (11) at (5,3.4){$t_{11}$};
        \node (12) at (5,4.6){$t_{12}$};

    \end{tikzpicture}
    \caption{An example of a $2$-path.}
    \label{fig:2-path-example}
\end{figure}

\begin{proposition}
\label{prop:2-paths-real}
If $G$ is a 2-path, then $\im{G} \leq 2$. 
\end{proposition}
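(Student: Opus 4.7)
Plan: I would construct an explicit $2$-improper interval coloring of $G$ via a fan decomposition. First, partition the triangles $t_1, \ldots, t_n$ into maximal consecutive blocks $F_1, \ldots, F_m$ sharing a common pivot, and let $P_r$ and $L_r$ denote the pivot and the number of triangles in $F_r$. Label the $L_r + 1$ non-pivot vertices of $F_r$ in order along the fan as $u_0^{(r)}, u_1^{(r)}, \ldots, u_{L_r}^{(r)}$. The key structural observation is that $u_0^{(r)} = P_{r-1}$ for $r \geq 2$ and $u_{L_r}^{(r)} = P_{r+1}$ for $r \leq m-1$, since the pivot is forced to change between consecutive fans and must therefore be the endpoint of the shared spine edge that differs from the current pivot.

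Next, set $\beta_1 = 1$ and $\beta_{r+1} = \beta_r + L_r$, and assign color $\beta_r + k$ to the spoke $P_r u_k^{(r)}$ for $0 \leq k \leq L_r$ and also color $\beta_r + k$ to the rim edge $u_{k-1}^{(r)} u_k^{(r)}$ for $1 \leq k \leq L_r$. The coloring is well-defined on shared edges: the spine edge between $F_{r-1}$ and $F_r$ is simultaneously the last spoke of $F_{r-1}$ (colored $\beta_{r-1} + L_{r-1}$) and the first spoke of $F_r$ (colored $\beta_r$), and the recursion makes these agree.

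Then I would verify the $2$-improper interval condition vertex by vertex. At an interior non-pivot vertex $u_k^{(r)}$ with $0 < k < L_r$, the three incident edges carry the colors $\{\beta_r + k, \beta_r + k, \beta_r + k + 1\}$, which is an interval with maximum multiplicity $2$. At a pivot $P_r$, the $L_r + 1$ spokes contribute the consecutive colors $\beta_r, \beta_r + 1, \ldots, \beta_r + L_r$, each with multiplicity one; if $r \geq 2$, the unique rim edge of $F_{r-1}$ incident to $P_r$ adds another copy of $\beta_r$, raising that color's multiplicity to two; and if $r \leq m-1$, the unique rim edge of $F_{r+1}$ incident to $P_r$ adds the color $\beta_r + L_r + 1 = \beta_{r+1} + 1$, extending the interval by one. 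In every case, the colors at $P_r$ form a consecutive interval with maximum multiplicity $2$. The two extreme vertices $u_0^{(1)}$ and $u_{L_m}^{(m)}$ have degree $2$ and are immediate.

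The main obstacle is the bookkeeping: each interior vertex can play up to three roles (pivot of some fan, last non-pivot vertex of the preceding fan, first non-pivot vertex of the succeeding fan), and one must track exactly which edges arise from which role so that multiplicities and intervals line up across fan boundaries. Once the offset recursion $\beta_{r+1} = \beta_r + L_r$ is fixed, the required verifications become mechanical, and the coloring witnesses $\im{G} \leq 2$.
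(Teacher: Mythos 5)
Your construction is correct, and in fact it produces exactly the paper's coloring shifted by one: unwinding the recursion gives $\beta_r = j$ where $t_j$ is the first triangle of $F_r$, so your spoke $e_i$ receives color $i+1$ and the third (rim) edge of $t_i$ also receives color $i+1$. The paper defines the same coloring directly from the sequence indices (color $e_i$ and the remaining edge of $t_i$ with color $i$) and verifies the interval and multiplicity conditions in one step using the fact that the distinguished edges incident to a vertex form a consecutive block, which avoids the fan-decomposition bookkeeping.
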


\begin{proof}
    Let $(e_0, t_1, e_1, t_2, \ldots, e_n)$ be the sequence defining $G$. Define an edge-coloring $\varphi \colon E(G) \to \mathbb{N}$ in the following way. For every $i \in \{0\} \cup [n]$, let $\varphi(e_i) = i$, and for every $i \in [n]$, color the remaining edge of $t_i$ with color $i$.

    Let $v \in V(G)$. By definition of a 2-path, there exist indices $0 \leq a \leq b \leq n$ such that $v$ is incident to $e_i$ if and only if $a \leq i \leq b$. Thus $v$ is incident to a triangle $t_j$ if and only if $\max\{a, 1\} \leq j \leq \min\{b + 1, n\}$. Hence, the colors on edges incident to $v$ are $a, \ldots, b$ and $\max\{a, 1\}, \ldots, \min\{b + 1, n\}$.
    This is an interval and each color occurs at most twice. Therefore, $\im{G} \leq 2$.
\end{proof}

\subsection{Iterated triangulations}

In this section we consider a subfamily of 3-trees. The $n$-th \emph{iterated triangulation} $\Tr(n)$ is the plane graph defined as follows. The graph $K_3$ is $\Tr(0)$.
For each $i \ge 0$, obtain $\Tr(i+1)$ from $\Tr(i)$ by adding a new vertex $v_f$ to the interior of each inner face $f$ of $\Tr(i)$ and connecting $v_f$ with each of the three vertices on $f$. 
For example, see Figure~\ref{fig:triangulation}.
Note that $\Tr(n)$ is a planar graph, and the boundary of each face of $\Tr(n)$ is a $3$-cycle.
In addition, $\Tr(n)$ is a special case of the Apollonian network, sometimes called the {\it complete Apollonian network}. 
Recall that Apollonian networks are precisely the planar 3-trees. 

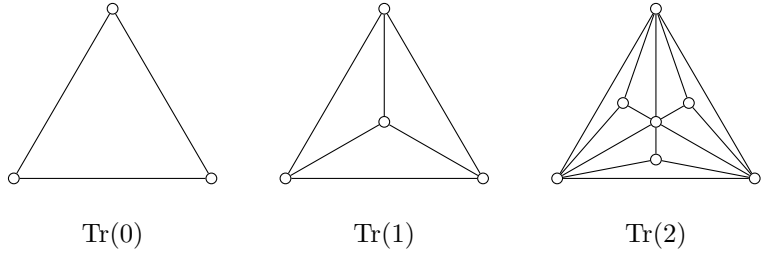
\begin{figure}
\centering
\begin{tikzpicture}
\node[circle,fill=white, draw=black, inner sep=0.05cm] (n0) at (-30:1.5){};
\node[circle,fill=white, draw=black, inner sep=0.05cm] (n1) at (90:1.5){};
\node[circle,fill=white, draw=black, inner sep=0.05cm] (n2) at (210:1.5){};
\draw (n0) -- (n1) -- (n2) -- (n0);
\node at (0,-1.5) {$\Tr(0)$};
\end{tikzpicture}
\qquad
\begin{tikzpicture}
\node[circle,fill=white, draw=black, inner sep=0.05cm] (n0) at (-30:1.5){};
\node[circle,fill=white, draw=black, inner sep=0.05cm] (n1) at (90:1.5){};
\node[circle,fill=white, draw=black, inner sep=0.05cm] (n2) at (210:1.5){};
\node[circle,fill=white, draw=black, inner sep=0.05cm] (m0) at (0,0){};
\draw (n0) -- (n1) -- (n2) -- (n0);
\draw (n0) -- (m0);
\draw (n1) -- (m0);
\draw (n2) -- (m0);
\node at (0,-1.5) {$\Tr(1)$};
\end{tikzpicture}
\qquad
\begin{tikzpicture}
\node[circle,fill=white, draw=black, inner sep=0.05cm] (n0) at (-30:1.5){};
\node[circle,fill=white, draw=black, inner sep=0.05cm] (n1) at (90:1.5){};
\node[circle,fill=white, draw=black, inner sep=0.05cm] (n2) at (210:1.5){};
\draw (n0) -- (n1) -- (n2) -- (n0);
\node[circle,fill=white, draw=black, inner sep=0.05cm] (m0) at (0,0){};
\draw (n0) -- (m0);
\draw (n1) -- (m0);
\draw (n2) -- (m0);
\node[circle,fill=white, draw=black, inner sep=0.05cm] (p0) at (30:0.5){};
\node[circle,fill=white, draw=black, inner sep=0.05cm] (p1) at (150:0.5){};
\node[circle,fill=white, draw=black, inner sep=0.05cm] (p2) at (-90:0.5){};
\draw (p0) -- (m0);
\draw (p0) -- (n0);
\draw (p0) -- (n1);
\draw (p1) -- (m0);
\draw (p1) -- (n1);
\draw (p1) -- (n2);
\draw (p2) -- (m0);
\draw (p2) -- (n0);
\draw (p2) -- (n2);
\node at (0,-1.5) {$\Tr(2)$};
\end{tikzpicture}
    \caption{Illustrations for $\Tr(0)$, $\Tr(1)$, and $\Tr(2)$.}
    \label{fig:triangulation}
\end{figure}

\begin{theorem}
For every $n \geq 1$, $\im{\Tr(n)} \le \left\lceil \frac{\Delta(\Tr(n))}{3} \right\rceil$.
\end{theorem}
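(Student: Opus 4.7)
My plan is to prove the statement by induction on $n$ with the strengthened hypothesis that, for every $n \geq 1$, there is an interval coloring $\varphi_n$ of $\Tr(n)$ using only the three colors $\{1, 2, 3\}$, having impropriety at most $2^{n-1}$, and such that on every triangular face of $\Tr(n)$ the three edges carry three distinct colors (hence exactly $\{1, 2, 3\}$). Since $\Delta(\Tr(n)) = 3 \cdot 2^{n-1}$ for $n \geq 1$, the bound $2^{n-1}$ coincides with $\lceil \Delta(\Tr(n))/3 \rceil$. The base case $n = 1$ is immediate because $\Tr(1) \cong K_4$ admits a proper 3-edge coloring with $\{1, 2, 3\}$ in which every triangle carries all three colors.

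For the inductive step, I will extend $\varphi_{n-1}$ as follows: for each inner face $f = \{u, v, w\}$ of $\Tr(n-1)$, add $v_f$ and set $\varphi_n(x v_f)$ equal to the $\varphi_{n-1}$-color of the edge of $f$ opposite $x$, for each $x \in \{u, v, w\}$. The three new sub-faces $\{u, v, v_f\}$, $\{v, w, v_f\}$, $\{u, w, v_f\}$ then each carry exactly the three face colors of $f$, preserving the face invariant. Since every vertex of $\Tr(n-1)$ was already incident to all three colors (being on at least one face) and the extension only uses colors from $\{1, 2, 3\}$, the colors at every vertex of $\Tr(n)$ still form the interval $\{1, 2, 3\}$, so $\varphi_n$ is an interval coloring.

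The main obstacle will be bounding the multiplicity of each color at each vertex by $2^{n-1}$, which I plan to handle through a combinatorial analysis of the link-sequence of edge colors around each vertex in the planar embedding—cyclic for an internal vertex and linear (a path of length $d(u)$) for a corner of the outer triangle. The face invariant forces consecutive terms of this sequence to differ; writing $a_{ij}$ for the number of consecutive pairs $\{i, j\}$, the opposite-edge rule inserts between each consecutive pair the unique color missing from it, which yields the recurrence $a'_{12} = a_{13} + a_{23}$, $a'_{13} = a_{12} + a_{23}$, $a'_{23} = a_{12} + a_{13}$. A direct pair-membership count shows that the multiplicity $m_c$ of color $c$ at the vertex equals $(a_{ci} + a_{cj})/2$ in the cyclic case and $(a_{ci} + a_{cj} + \mathbf{1}[s_1 = c] + \mathbf{1}[s_L = c])/2$ in the linear case, where $\{i, j\} = \{1, 2, 3\} \setminus \{c\}$ and $s_1, s_L$ are the endpoint colors.

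For a non-corner vertex $u$ first introduced at step $j$ as the center of some face, the starting triangle gives $a_{12} = a_{13} = a_{23} = 1$, a balance preserved by the recurrence, so $m_c = d(u)/3 \leq 2^{n-1}$. For a corner vertex $a$, the endpoints of its link-sequence are the fixed colors of the two outer-triangle edges at $a$ (say $s_1 = 1$, $s_L = 3$); starting from $(a_{12}, a_{13}, a_{23}) = (1, 0, 1)$ in $\Tr(1)$, the recurrence makes $a_{12}^{(n)} \geq 1$ and $a_{23}^{(n)} \geq 1$ for all $n$, and combined with $a_{12} + a_{13} + a_{23} = d(a) - 1 = 2^n$ this forces $m_1, m_2, m_3 \leq 2^{n-1}$, completing the induction.
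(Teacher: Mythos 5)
Your construction is exactly the paper's: color the outer triangle with $1,2,3$ and give each new edge $v_f x$ the color of the edge of $f$ opposite $x$ (equivalently, the unique color of $[3]$ missing from the two face-edges at $x$), which preserves the invariant that every triangular face is rainbow. The only difference is how the multiplicity bound is verified. The paper observes that around each vertex the edge colors read $x,y,z,x,y,z,\dots$ periodically in the rotation, which immediately gives at most $\lceil d(v)/3\rceil \le \lceil \Delta(\Tr(n))/3\rceil$ occurrences of each color; your pair-count recurrence $a'_{12}=a_{13}+a_{23}$, etc., together with the cyclic/linear incidence count, is a correct and more explicit substitute for that periodicity claim (indeed the recurrence is just the shadow of the periodicity), at the cost of a separate computation for interior versus corner vertices and of the identity $\Delta(\Tr(n))=3\cdot 2^{n-1}$. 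Both arguments are sound; the paper's is shorter but states the periodicity without proof, while yours supplies the bookkeeping that justifies the same conclusion.
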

\begin{proof}
By observing a construction of $\Tr(n)$, we see that $\Tr(n)$ contains $\Tr(i)$ for all $i \in \{0\} \cup [n]$.
For $i \in [n]$, let us call the vertices in $V(\Tr(i)) \setminus V(\Tr(i-1)) \subseteq V(\Tr(n))$, the {\em $i$-th vertices} in $\Tr(n)$.

Let $C = v_1 v_2 v_3 v_1$ be a $3$-cycle that is the boundary of the outer face of $\Tr(n)$. 
Then $C$ is a $\Tr(0)$.
Let $\varphi$ be the edge coloring of $C$ defined by $\varphi(v_1v_2)=1$, $\varphi(v_2v_3)=2$, and $\varphi(v_3v_1)=3$.

For each $i \in [n]$, given the edge coloring $\varphi$ of $\Tr(i-1)$, we color the edges of $\Tr(i)$ by giving colors as follows.
Let $v$ be a $i$-th vertex in $\Tr(n)$.
Then $v$ is located inside the face $f$ of $\Tr(i-1)$.
Let $C_f = u_1 u_2 u_3 u_1$ be the boundary 
of the face $f$.
Then for each $j \in [3]$, we let $\varphi(vu_j) \in [3] \setminus \{\varphi(u_ju_{j+1}), \varphi(u_ju_{j-1})\}$, where the addition in the subscript is modulo $3$.

Then for each $v \in V(\Tr(n))$, edges incident with $v$ will receive all three colors in $[3]$ appearing periodically, in the counterclockwise direction.
For example, the edges incident with $v$ will receive $x$, $y$, $z$, $x$, $y$, $z$, $\ldots$, where $\{x,y,z\} = [3]$.
Therefore, $\varphi$ is a $k$-improper interval coloring of $\Tr(n)$, with $k \leq \left\lceil \frac{\Delta(\Tr(n))}{3} \right\rceil$.
\end{proof}

Note that the theorem does not hold for $Tr(0)$, since $Tr(0) = K_3$ and thus has impropriety $2 > \left\lceil \frac{2}{3} \right\rceil$.

\section{Outerplanar graphs}

An outerplanar graph is Class 1 unless it is an odd cycle. 
In \cite{casselgren2021improper}, Casselgren and Petrosyan proved the following theorem on the impropriety of an outerplanar graph with respect to the maximum degree. 

\begin{theorem}[\cite{casselgren2021improper}]\label{thm:D4}
    If $G$ is an outerplanar graph, then $\mu_{int}(G) \le \left\lceil \frac{\Delta(G)}{4}\right\rceil +1$.
\end{theorem}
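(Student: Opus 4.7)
The plan is to proceed by induction on $|V(G)|$, with Theorem~\ref{thm:delta5} handling the small-degree base. When $\Delta(G) \le 5$, Theorem~\ref{thm:delta5} already gives $\mu_{int}(G) \le 2 \le \lceil \Delta(G)/4 \rceil + 1$, so I may assume $\Delta(G) \ge 6$ and set $k = \lceil \Delta(G)/4 \rceil + 1 \ge 3$. Since outerplanar graphs are $2$-degenerate, there exists $v \in V(G)$ with $d_G(v) \le 2$; the subgraph $G - v$ is outerplanar with $\Delta(G-v) \le \Delta(G)$, so by the inductive hypothesis it admits a $k$-improper interval coloring $\varphi$, and the task reduces to extending $\varphi$ to the one or two edges incident with $v$.

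If $d_G(v) = 1$ with neighbor $u$, I would set $\varphi(vu)$ to be one more than the largest color at $u$: this introduces a fresh color at $u$ and trivially preserves both the interval and impropriety conditions. The substantive case is $d_G(v) = 2$ with neighbors $u_1, u_2$. For $i \in \{1,2\}$, let $[m_i, M_i]$ be the interval of colors at $u_i$ under $\varphi$, call a color \emph{full} at $u_i$ if it already appears $k$ times there, and put
\[
T_i = \{m_i - 1,\; M_i + 1\} \cup \{c \in [m_i, M_i] : c \text{ is not full at } u_i\};
\]
this is exactly the set of valid color choices for the edge $vu_i$, and the goal is to find $c_1 \in T_1$, $c_2 \in T_2$ with $|c_1 - c_2| \le 1$. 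The key quantitative input is $d_G(u_i) - 1 \le \Delta(G) - 1 \le 4(k-1) - 1 < 4k$, which shows that no four consecutive integers inside $[m_i, M_i]$ can all be full, so $T_i$ has no gap of length four inside $[m_i - 1, M_i + 1]$.

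I would split the degree-$2$ case according to whether $v, u_1, u_2$ bound a triangular face of $G$. In the triangular subcase, the color $a := \varphi(u_1 u_2)$ lies in both intervals and provides a natural pivot, and a short case analysis on how the window $\{a-1, a, a+1\}$ intersects each $T_i$, together with the ``no gap of length four'' bound, should produce compatible $c_1, c_2$. In the non-triangular subcase, one can first recolor $\varphi$ so that the intervals at $u_1$ and $u_2$ are close (by a local shift of colors, or by applying the induction to an auxiliary outerplanar graph chosen to force this alignment) and then apply the same pivot argument. The main obstacle is ruling out the pathological configuration in which every color near the natural pivot is full at both $u_1$ and $u_2$, and the factor of $4$ in $\lceil \Delta(G)/4 \rceil + 1$ is precisely what the counting estimate $d_G(u_i) - 1 < 4k$ provides to prevent such configurations.
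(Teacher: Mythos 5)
Note first that the paper does not prove Theorem~\ref{thm:D4} at all: it quotes it from \cite{casselgren2021improper} and instead proves the stronger Theorem~\ref{thm:D5}. Your architecture (induct by deleting a vertex of degree at most $2$, then extend the coloring to the one or two new edges) matches the spirit of that proof, your degree-$1$ case is fine, and your non-triangular degree-$2$ case is recoverable via the auxiliary outerplanar graph $G-v+u_1u_2$ (color it by induction, then give both $vu_1$ and $vu_2$ the color of $u_1u_2$, exactly as in Case~(2) of the proof of Theorem~\ref{thm:D5}). The genuine gap is in the triangular subcase, where you take an \emph{arbitrary} $2$-vertex and rely only on the estimate $d_{G-v}(u_i)\le 4(k-1)-1$. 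That estimate forbids four consecutive full colors at a single neighbor, but it permits three, and three full colors at each neighbor placed on opposite sides of the pivot $a=\varphi(u_1u_2)$ defeat the pairing. Concretely, take $\Delta(G)=16$, so $k=5$: suppose $d_{G-v}(u_1)=d_{G-v}(u_2)=15$, the interval at $u_1$ is $[a-2,a]$ with each of $a-2,a-1,a$ appearing exactly $5$ times, and the interval at $u_2$ is $[a,a+2]$ with each of $a,a+1,a+2$ appearing exactly $5$ times. Then $T_1=\{a-3,\,a+1\}$ and $T_2=\{a-1,\,a+3\}$, and every pair $(c_1,c_2)\in T_1\times T_2$ satisfies $|c_1-c_2|\ge 2$, so the extension fails. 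Counting at $u_1$ and $u_2$ separately cannot exclude this configuration, so the factor of $4$ does \emph{not} do the work you claim it does.

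What is missing is a structural statement letting you choose a \emph{good} $2$-vertex rather than an arbitrary one; this is precisely the role of Lemma~\ref{lem:2con} in the proof of Theorem~\ref{thm:D5}. After reducing to the $2$-connected case, one finds a $2$-vertex $v$ that either lies on no triangle (your easy case) or lies on a triangle $vuw$ with $d_G(u)\le 4$. Then $d_{G-v}(u)\le 3$, so $u$ accepts a color from each of several windows around $x=\varphi(uw)$ essentially for free, and the whole counting burden falls on the single high-degree neighbor $w$: if each of $x-1,x,x+1,x+2$ were full at $w$, then $w$ would meet at least $4k$ edges among those colors, forcing the fifth color $x-2$ to appear fewer than $k$ times at $w$, after which $\varphi(vw)=x-2$ and $\varphi(vu)\in\{x-1,x-2\}$ completes the coloring. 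This asymmetry is exactly what breaks the bad configuration above. To close your proof you should either import such a lemma (a $2$-vertex with a neighbor of bounded degree) or replace the greedy extension by a recoloring argument; as written, the triangular case is open.
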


They also made the following conjecture, and proved that the Conjecture~\ref{conj:outerplanar} is true when $\Delta(G) \le 8$.

\begin{conjecture}[\cite{casselgren2021improper}]\label{conj:outerplanar}
If $G$ is an outerplanar graph, then $\mu_{int}(G) \le 2$.   
\end{conjecture}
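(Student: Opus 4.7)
The plan is to prove the conjecture by strong induction on $|V(G)|$. The base cases cover trees (with $\mu_{int}(T) = 1$ by Theorem~\ref{thm:forest}), cycles (for which $\mu_{int}(C_n) \le 2$ by the regular-graph impropriety result), and small $2$-connected outerplanar graphs such as the diamond $K_4 - e$.

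For the inductive step, first reduce to the $2$-connected case. If $G$ has a cut vertex $v$, decompose $G = G_1 \cup G_2$ with $V(G_1) \cap V(G_2) = \{v\}$, apply induction to obtain $2$-improper interval colorings $\varphi_1, \varphi_2$ of $G_1, G_2$, and translate $\varphi_2$ by an additive shift so that the maximum color at $v$ under $\varphi_1$ is exactly one less than the minimum color at $v$ under the shifted $\varphi_2$. The concatenated color multiset at $v$ is a single integer interval, and since the two intervals are disjoint, no color appears more than twice at $v$. A disconnected $G$ is handled by coloring each component independently.

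Now assume $G$ is $2$-connected outerplanar, and fix an embedding with every vertex on the outer face. If $G$ is a cycle, it is covered by the base case, so assume $G$ has at least one chord; then the weak dual of its inner faces is a tree with at least two leaves. Pick a leaf face $f$ whose boundary consists of a path $v_0 v_1 \dots v_{k-1}$ on the outer cycle together with one chord $v_0 v_{k-1}$ shared with the neighboring inner face. The interior vertices $v_1, \dots, v_{k-2}$ all have degree $2$ in $G$. Remove them to get a smaller outerplanar graph $G' = G - \{v_1, \dots, v_{k-2}\}$, apply induction to obtain a $2$-improper interval coloring $\varphi'$ of $G'$, and extend by assigning colors $\beta_1, \dots, \beta_{k-1}$ to $v_0v_1, v_1v_2, \dots, v_{k-2}v_{k-1}$ with $|\beta_i - \beta_{i+1}| \le 1$ (so each interior $v_i$ sees an interval of size at most $2$, each color used at most twice) and with $\beta_1, \beta_{k-1}$ compatible with the existing color intervals at $v_0, v_{k-1}$.

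The main obstacle is this extension step, and it is sharpest when $k = 3$. Writing $\alpha = \varphi'(v_0 v_{k-1})$, the extension fails directly when all three colors $\alpha-1, \alpha, \alpha+1$ are already \emph{saturated} (each appearing twice) at one of $v_0$ or $v_{k-1}$ and lie strictly inside that vertex's color interval, so that neither extending the interval at either end nor reusing an interior color is permitted. Resolving such cases seems to require either a local modification of $\varphi'$ via a Kempe-type recoloring of edges incident to $v_0$ or $v_{k-1}$, a more flexible choice of leaf face in the weak dual, or a strengthening of the inductive hypothesis that tracks at each vertex which colors are ``free'' versus ``saturated''. The difficulty of handling this casework in full generality is consistent with the fact that Casselgren and Petrosyan were only able to establish the conjecture under the restriction $\Delta(G) \le 8$.
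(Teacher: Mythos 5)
The statement you are trying to prove is stated in the paper as a \emph{conjecture} (due to Casselgren and Petrosyan), not a theorem: the paper does not prove it, and neither does your proposal. What the paper actually does is prove Theorem~\ref{thm:D5}, namely $\mu_{int}(G) \le \left\lceil \frac{\Delta(G)}{5}\right\rceil$ for outerplanar $G$ with $\Delta(G) \ge 6$, which confirms the conjecture only in the range $\Delta(G) \le 10$. Your reductions (handling cut vertices by shifting colors, then working inside a $2$-connected block and peeling off a leaf face of the weak dual) are sound and closely parallel the paper's own strategy: the cut-vertex claim in the proof of Theorem~\ref{thm:D5} is essentially your first reduction, and the paper's Case~(2) handles your $k \ge 4$ situation (a degree-$2$ vertex not on a triangle) by contracting to $G - v + uw$ and giving both new edges the color of $uw$.

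The genuine gap is exactly the one you name yourself: when the leaf face is a triangle ($k=3$), the single interior vertex $v$ has two neighbors $u,w$ of possibly large degree, and all of $\alpha-1,\alpha,\alpha+1$ may already appear twice at both and lie strictly inside their intervals, so no extension keeps the impropriety at $2$. You do not resolve this case, so the proposal is not a proof. For what it is worth, the paper's partial workaround is Lemma~\ref{lem:2con}: in a $2$-connected outerplanar graph in which every $2$-vertex lies on a triangle, some $2$-vertex has a neighbor $u$ of degree at most $4$. The low degree of $u$ gives four admissible choices for the color of $vu$ (compatible with colors $x-1,x,x+1,x+2$ at $w$), and a counting argument at $w$ (at most $\Delta-1$ incident colored edges versus at least $4\left\lceil \frac{\Delta}{5}\right\rceil$ edges needed to block all four options) produces a fifth usable color $x-2$. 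This counting only closes the case when the impropriety budget is $\left\lceil \frac{\Delta}{5}\right\rceil$ rather than the constant $2$, which is precisely why the full conjecture remains open and why your proposal, absent a Kempe-type recoloring or a strengthened induction hypothesis, cannot be completed as written.
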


This conjecture is sharp if true since every odd cycle has impropriety exactly $2$.
In this section, we improve Theorem~\ref{thm:D4}. Note that this result proves that Conjecture~\ref{conj:outerplanar} is true when $\Delta(G) \le 10$. We will make use of the following lemma.

\begin{lemma}\label{lem:2con}
     Suppose $G$ is a $2$-connected outerplanar graph such that every $2$-vertex in $G$ is on a $3$-cycle. 
Then there is a $2$-vertex in $G$ that has a neighbor of degree at most $4$.
   
\end{lemma}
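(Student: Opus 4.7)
My approach is to work with the weak dual tree $T$ of $G$: the vertices of $T$ are the inner faces of $G$, and two faces are adjacent if and only if they share an edge (necessarily a chord). The proof proceeds by contradiction, assuming every $2$-vertex of $G$ has both neighbors of degree at least $5$.

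I would first establish two structural observations about $T$. First, since $G$ is $2$-connected and outerplanar, the edges at any vertex $x$ consist of exactly two outer edges framing the outer-face gap in the cyclic order around $x$, with the chords at $x$ sandwiched between them; the inner faces at $x$ are therefore linearly ordered by this cyclic order and form a path $P_x$ in $T$ on $\deg_G(x) - 1$ vertices (so of length $\deg_G(x) - 2$). Second, every leaf of $T$ must be a triangle of $G$ corresponding to a $2$-vertex: indeed, if a leaf face $\ell$ had $k \geq 4$ sides, each of the $k - 2$ interior vertices of its outer boundary path would be a $2$-vertex whose two outer neighbors are not joined by any edge of $G$ (since the only chord on $\ell$ is the one shared with its neighbor in $T$), contradicting the hypothesis. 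Thus leaves of $T$ correspond bijectively to $2$-vertices of $G$.

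Now I would take a diameter path $\ell_1 = f_0, f_1, \ldots, f_d = \ell_2$ in $T$, handling the small cases $|V(T)| \leq 2$ by direct inspection (they yield $G \in \{K_3, K_4 - e\}$, where the conclusion is immediate). Write $\ell_1 = u_1 v_1 w_1$, where $v_1$ is a $2$-vertex and $u_1 w_1$ is the chord shared with $f_1$. Under the contrary hypothesis, $\deg_G(u_1), \deg_G(w_1) \geq 5$, so the paths $P_{u_1}$ and $P_{w_1}$ each have length at least $3$, both begin with $\ell_1, f_1$, and then leave $f_1$ through its two distinct edges incident to $u_1$ and $w_1$, entering \emph{distinct} subtrees of $T \setminus \{f_1\}$. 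The remainder of the diameter, $f_2, \ldots, f_d$, lies in a single subtree of $T \setminus \{f_1\}$, so for at least one $x \in \{u_1, w_1\}$ the path $P_x$ past $f_1$ sits in a subtree disjoint from the one containing $\ell_2$. The endpoint of $P_x$ is then at $T$-distance $\deg_G(x) - 3$ from $f_1$, while $\ell_2$ is at $T$-distance $d - 1$ from $f_1$ in a different subtree, so their $T$-distance equals $\deg_G(x) + d - 4$. This cannot exceed $d$, forcing $\deg_G(x) \leq 4$, the desired contradiction.

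The main obstacle is the first structural observation: that the inner faces containing any vertex $x$ form a path in $T$ rather than an arbitrary subtree. This relies on outerplanarity placing all edges at $x$ in a single arc of the cyclic order at $x$ (bounded by the two outer edges) together with $2$-connectivity forcing each face boundary to be a simple cycle, so that the inner faces along this arc are pairwise distinct. Everything after this structural input is a short distance computation in $T$.
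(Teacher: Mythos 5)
Your proof is correct, but it follows a genuinely different route from the paper's. The paper's argument is a three-line degeneracy count: under the contrary assumption every neighbor of a $2$-vertex has degree at least $5$; since $G$ is $2$-connected and outerplanar, every vertex lies on the Hamiltonian outer cycle and a $2$-vertex is adjacent only to its two cycle-neighbors, so each vertex of $G$ has at most two neighbors in the set $S$ of $2$-vertices; hence $G \setminus S$ would be an outerplanar graph of minimum degree $3$, contradicting the fact that every outerplanar graph has a vertex of degree at most $2$. Notably, that argument never invokes the hypothesis that every $2$-vertex lies on a $3$-cycle, whereas your classification of the leaves of the weak dual tree $T$ does. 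What your approach buys in exchange for its extra length is structural information: it pins down a $2$-vertex with a low-degree neighbor at a specific location (a leaf ending a diameter path of $T$), which could matter for an algorithmic refinement or a sharper statement. Your two supporting observations are both sound as sketched --- the faces at a vertex $x$ form a walk through pairwise distinct faces with consecutive ones adjacent, hence a path since $T$ is a tree, and the two continuations of $P_{u_1}$ and $P_{w_1}$ out of $f_1$ cross distinct chords of $f_1$ and therefore enter distinct components of $T - f_1$ --- but since they carry the whole weight of the distance computation, they deserve to be spelled out at the level of detail you give in your final paragraph rather than asserted.
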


\begin{proof} 
Assume that $G$ is an outerplanar graph not satisfying Lemma~\ref{lem:2con}. Then each neighbour of a 2-vertex has degree at least 5. Denote by $S$ the set of all 2-vertices. Then, every other vertex in the graph may have at most two neighbors from $S$. Thus, the graph $G\setminus S$ is an outerplanar graph of minimum degree 3, a contradiction. 

\end{proof}

\begin{theorem}\label{thm:D5}
    For any outerplanar graph $G$ with $\Delta(G) \ge 6$, $\mu_{int}(G) \le \left\lceil \frac{\Delta(G)}{5}\right\rceil$.
\end{theorem}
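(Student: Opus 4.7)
The plan is to argue by induction on $|V(G)|$ with $k := \lceil \Delta(G)/5 \rceil$, noting that $k \ge 2$ and $\Delta(G) \le 5k$. Whenever a deletion drops the maximum degree to at most $5$, Theorem~\ref{thm:delta5} replaces the inductive hypothesis and still delivers impropriety at most $2 \le k$.

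The routine reductions dispose of most cases. If $G$ is a forest, Theorem~\ref{thm:forest} applies directly. If $G$ is disconnected or has a cut vertex, I color each block by induction and translate the palettes so that the intervals meeting at the cut vertex align into a single integral interval; since distinct blocks then occupy disjoint color ranges at the shared vertex, impropriety is preserved. If $G$ has a pendant vertex, or a $2$-vertex $v$ whose neighbors are non-adjacent, I delete $v$, color $G - v$ inductively, and extend by using colors just beyond the current interval at each neighbor of $v$ (with the two extensions chosen within distance one of each other in the $2$-vertex case), which keeps the multiplicity of each new color equal to $1$ at its endpoint.

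The substantive case is when $G$ is $2$-connected and every $2$-vertex lies on a $3$-cycle. Lemma~\ref{lem:2con} then supplies a $2$-vertex $v$ on a triangle $uvw$ with $uw \in E(G)$ and $d_G(u) \le 4$. Let $\varphi$ be a $k$-improper interval coloring of $G - v$; set $a = \varphi(uw)$, write $I_z = [\alpha_z, \beta_z]$ for the $\varphi$-interval at $z \in \{u, w\}$, and write $m_z(c)$ for the multiplicity of color $c$ at $z$. I want to pick $c_1 = \varphi'(uv)$ and $c_2 = \varphi'(wv)$ satisfying (i) $|c_1 - c_2| \le 1$, (ii) $c_1 \in [\alpha_u - 1, \beta_u + 1]$ and $m_u(c_1) \le k - 1$, and (iii) the analogous condition at $w$. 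Write $V_u, V_w$ for the admissibility sets at $u$ and $w$ respectively.

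The main obstacle is the simultaneous fulfillment of (i)--(iii), as no degree bound is available at $w$. From $d_{G-v}(u) \le 3$ and $k \ge 2$, at most one of $m_u(a - 1), m_u(a), m_u(a + 1)$ can equal $k$, so at least two elements of $\{a - 1, a, a + 1\}$ lie in $V_u$; consequently
\[
C = \bigcup_{c_1 \in V_u \cap \{a - 1, a, a + 1\}} \{c_1 - 1, c_1, c_1 + 1\}
\]
is an interval of at least five consecutive integers containing $a$. If $C \cap V_w = \emptyset$, then each $c \in C$ either lies outside $[\alpha_w - 1, \beta_w + 1]$ or satisfies $m_w(c) = k$. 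The first alternative, combined with contiguity of $C$ and $a \in C$, would force $\alpha_w - 1$ or $\beta_w + 1$ into $C$; but such an endpoint has multiplicity zero at $w$ and therefore lies in $V_w$, contradicting $C \cap V_w = \emptyset$. So every $c \in C$ satisfies $m_w(c) = k$, yielding $d_{G-v}(w) \ge |C| \cdot k \ge 5k$, which contradicts $d_{G-v}(w) \le \Delta(G) - 1 \le 5k - 1$. A valid pair $(c_1, c_2)$ therefore exists, and the extension closes the induction.
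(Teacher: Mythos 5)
Your overall architecture matches the paper's (reduce to a $2$-connected graph, invoke Lemma~\ref{lem:2con}, and extend a coloring of $G-v$ across the triangle), but two steps fail as written. First, the reduction for a $2$-vertex $v$ with non-adjacent neighbors $u,w$ is broken: after coloring $G-v$ inductively, the colors ``just beyond the current interval'' at $u$ and at $w$ need not be within distance one of each other --- the inductive hypothesis gives no control over the relative positions of the intervals at $u$ and at $w$, which can be arbitrarily far apart, while the two edges at $v$ must receive colors differing by at most one. So the extension you describe can be impossible. The paper instead colors $G-v+uw$ (still outerplanar, since $v$ is a $2$-vertex) and then replaces the edge $uw$, say of color $x$, by the path $uvw$ with both new edges colored $x$; this leaves the palettes at $u$ and $w$ untouched and gives $v$ impropriety $2\le k$.

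Second, in the substantive case the claim that $C$ is an interval of at least five consecutive integers is false. You only guarantee that $V_u\cap\{a-1,a,a+1\}$ contains two of the three colors, and if those two are adjacent, $C$ has exactly four elements. This really occurs: take $k=2$ (so $6\le\Delta(G)\le 10$) and suppose the three edges at $u$ in $G-v$ are colored $a,a+1,a+1$; then $m_u(a+1)=k$, so $V_u\cap\{a-1,a,a+1\}=\{a-1,a\}$ and $C=\{a-2,a-1,a,a+1\}$. Your final count then yields only $d_{G-v}(w)\ge 4k=8$, which is consistent with $d_{G-v}(w)\le\Delta(G)-1\le 9$, and no contradiction is reached. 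The gap is repairable: in this configuration $\beta_u+1=a+2$ also lies in $V_u$, and taking the union over all of $V_u$ (rather than only over $V_u\cap\{a-1,a,a+1\}$) always produces an interval of at least five colors containing $a$, which restores $d_{G-v}(w)\ge 5k>\Delta(G)-1$. The paper's proof achieves the same effect by an explicit case analysis showing that each of the five colors $x-2,\dots,x+2$ can be paired with an admissible color at $u$, forcing four of them to have multiplicity at least $k$ at $w$ and leaving $x-2$ available.
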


\begin{proof}
Let $G$ be a minimal counterexample to the statement with respect to the number of vertices. So $G$ is an outerplanar graph with $\Delta(G) \geq 6$ and $\mu_{int}(G)> \left\lceil \frac{\Delta(G)}{5}\right\rceil$.

\begin{claim}
    $G$ is $2$-connected.    
\end{claim}
\begin{myproof}
    Suppose on the contrary that $G$ not $2$-connected.
    
    If $G$ is disconnected, and $G$ is a disjoint union of two subgraphs $G_1$ and $G_2$, then by the minimality of $G$, $\mu_{int}(G_1) \le \left\lceil \frac{\Delta(G)}{5}\right\rceil$ and $\mu_{int}(G_2) \le \left\lceil \frac{\Delta(G)}{5}\right\rceil$. This implies $\mu_{int}(G) \le \left\lceil \frac{\Delta(G)}{5}\right\rceil$, which is a contradiction.
    Thus, $G$ is connected.

    Let $v$ be a cut vertex of $G$, and let $G_1$ and $G_2$ be two subgraphs of $G$ such that $V(G_1) \cap V(G_2) = \{v\}$.
    By the minimality of $G$, $\mu_{int}(G_1) \le \left\lceil \frac{\Delta(G)}{5}\right\rceil$ and $\mu_{int}(G_2) \le \left\lceil \frac{\Delta(G)}{5}\right\rceil$. 
    We shift the colors of $G_2$ appropriately so that the colors on the edges incident with $v$ form an interval.
    This implies that $\mu_{int}(G) \le \left\lceil \frac{\Delta(G)}{5}\right\rceil$, which is a contradiction.
\end{myproof}

Since $G$ is a $2$-connected outerplanar graph, by Lemma~\ref{lem:2con}, $G$ has a $2$-vertex $v$ that is either
\begin{itemize}
        \item[(i)] on a $3$-cycle $vuw$ such that $d_G(u) \le 4$, or; 
        \item[(ii)] not on a $3$-cycle.
    \end{itemize}

\textbf{Case (1)} Suppose (i) holds. 
Then $G$ has a $2$-vertex that is on a $3$-cycle $vuw$ such that $d_G(u) \le 4$.    
By the minimality of $G$, $G-v$ has an $\left\lceil \frac{\Delta(G)}{5} \right\rceil$-improper interval coloring $\varphi$.
Since $\Delta(G) \ge 6$, $\im{G} > \left\lceil \frac{\Delta(G)}{5} \right\rceil \ge 2$.
Let $\varphi(uw) = x$.

Since $d_{G-v}(u) \le 3$, we may assign $\varphi(vu) \in \{x, x+1\}$ so that the interval property at $u$ is preserved and the impropriety at $u$ is still at most $\left\lceil \frac{\Delta(G)}{5} \right\rceil$.
If assigning $\varphi(vw)=x$ does not make the impropriety at $w$ greater than $\left\lceil \frac{\Delta(G)}{5} \right\rceil$, then it is a contradiction that $\im{G} > \left\lceil \frac{\Delta(G)}{5} \right\rceil$.
Thus, assigning $\varphi(vw)=x$ makes the impropriety at $w$ greater than $\left\lceil \frac{\Delta(G)}{5} \right\rceil$, and this means that $w$ is incident with at least $\left\lceil \frac{\Delta(G)}{5}\right\rceil$ edges in $G-v$ that are colored $x$ by $\varphi$.
Similarly, $w$ is incident with at least $\left\lceil \frac{\Delta(G)}{5}\right\rceil$ 
edges in $G-v$ that are colored $x+1$ by $\varphi$.

Since $d_{G-v}(u) \le 3$, we may assign $\varphi(vu) \in \{x+1, x+2\}$ so that the interval property at $u$ is preserved and the impropriety at $u$ is still at most $\left\lceil \frac{\Delta(G)}{5} \right\rceil$.
If assigning $\varphi(vw)=x+2$ does not make the impropriety at $w$ greater than $\left\lceil \frac{\Delta(G)}{5} \right\rceil$, then it is a contradiction that $\im{G} > \left\lceil \frac{\Delta(G)}{5} \right\rceil$.
Thus, assigning $\varphi(vw)=x+2$ makes the impropriety at $w$ greater than $\left\lceil \frac{\Delta(G)}{5} \right\rceil$, and this means that $w$ is incident with at least $\left\lceil \frac{\Delta(G)}{5}\right\rceil$ edges in $G-v$ that are colored $x+2$ by $\varphi$.

Since $d_{G-v}(u) \le 3$, we may assign $\varphi(vu) \in \{x, x-1\}$ so that the interval property at $u$ is preserved and the impropriety at $u$ is still at most $\left\lceil \frac{\Delta(G)}{5} \right\rceil$.
If assigning $\varphi(vw)=x-1$ does not make the impropriety at $w$ greater than $\left\lceil \frac{\Delta(G)}{5} \right\rceil$, then it is a contradiction that $\im{G} > \left\lceil \frac{\Delta(G)}{5} \right\rceil$.
Thus, assigning $\varphi(vw)=x-1$ makes the impropriety at $w$ greater than $\left\lceil \frac{\Delta(G)}{5} \right\rceil$, and this means that $w$ is incident with at least $\left\lceil \frac{\Delta(G)}{5}\right\rceil$ edges in $G-v$ that are colored $x-1$ by $\varphi$.

By the above arguments, $w$ is incident with at least $4 \cdot \left\lceil \frac{\Delta(G)}{5}\right\rceil$ edges in $G-v$ that are colored by a color in $\{x-1,x,x+1,x+2\}$.
Since there are at most $\Delta(G)-1$ edges in $G-v$ that are incident with $w$, $w$ is incident with less than $\left\lceil \frac{\Delta(G)}{5} \right\rceil$ edges in $G-v$ colored $x-2$ by $\varphi$. 
Thus, we may assign $\varphi(vu) \in \{x-1, x-2\}$ and $\varphi(vw)=x-2$, and obtain an $\left\lceil \frac{\Delta(G)}{5} \right\rceil$-improper interval coloring $\varphi$ of $G$, which is a contradiction.

\medskip

\textbf{Case (2)} Suppose (ii) holds. Then $G$ has a $2$-vertex $v$ that is not on a $3$-cycle.
Let $N_G(v)=\{u,w\}$.
Since $v$ is not on a $3$-cycle, $uw \not\in E(G)$.
By the minimality of $G$, $G-v+uw$ has an $\left\lceil \frac{\Delta(G)}{5} \right\rceil$-improper interval coloring $\varphi$.
Let $\varphi(uw)=x$. 
Then by assigning $\varphi(vu)=\varphi(vw)=x$, $\varphi$ becomes an $\left\lceil \frac{\Delta(G)}{5} \right\rceil$-improper interval coloring of $G$, which is a contradiction.
\end{proof}

\section{Corona Products}

Let $G$ and $H$ be graphs, with $V(G) = \{v_1,v_2,\dots,v_n\}$. The \emph{corona product} of $G$ and $H$, denoted by $G \odot H$, is the graph obtained by taking $G$ and $n$ disjoint copies of $H$, say $H_1,H_2,\dots,H_n$, and for each $i \in [n]$, adding the edge $v_i x$ for every $x\in V(H_i)$. Note that $K_1 \odot H$ is obtained from $H$ by adding a universal vertex to $H$. See Figure~\ref{fig:c4p3} for an example of the corona product $C_4 \odot P_3$.

\begin{figure}[ht]
\centering
    \begin{tikzpicture}[every node/.style={circle, draw=black, fill=white, inner sep=0.05cm}]

 		\node (0) at (-0.65*1, 0.65*1) {};
		\node (1) at (0.65*1, 0.65*1) {};
		\node (2) at (0.65*1, -0.65*1) {};
		\node (3) at (-0.65*1, -0.65*1) {};
		\node (4) at (-0.65*2, 0.65*2) {};
		\node (5) at (-0.65*1, 0.65*3) {};
		\node (6) at (-0.65*3, 0.65*1) {};
		\node (7) at (0.65*1, 0.65*3) {};
		\node (8) at (0.65*2, 0.65*2) {};
		\node (9) at (0.65*3, 0.65*1) {};
		\node (10) at (0.65*1, -0.65*3) {};
		\node (11) at (0.65*2, -0.65*2) {};
		\node (12) at (0.65*3, -0.65*1) {};
		\node (13) at (-0.65*2, -0.65*2) {};
		\node (14) at (-0.65*3, -0.65*1) {};
		\node (15) at (-0.65*1, -0.65*3) {};

 		\draw[ultra thick] (0) to (1);
		\draw[ultra thick] (1) to (2);
		\draw[ultra thick] (2) to (3);
		\draw[ultra thick] (3) to (0);
		\draw (0) to (4);
		\draw (0) to (5);
		\draw (0) to (6);
		\draw (1) to (7);
		\draw (1) to (8);
		\draw (1) to (9);
		\draw (2) to (12);
		\draw (2) to (11);
		\draw (2) to (10);
		\draw (3) to (15);
		\draw (3) to (13);
		\draw (3) to (14);
		\draw[ultra thick] (6) to (4);
		\draw[ultra thick] (4) to (5);
		\draw[ultra thick] (7) to (8);
		\draw[ultra thick] (8) to (9);
		\draw[ultra thick] (12) to (11);
		\draw[ultra thick] (11) to (10);
		\draw[ultra thick] (15) to (13);
		\draw[ultra thick] (13) to (14);
\end{tikzpicture}
\caption{The corona product $C_4 \odot P_3$.}
\label{fig:c4p3}
\end{figure}
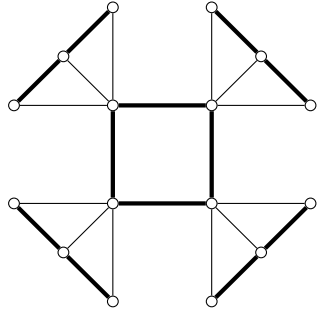

We first determine an upper bound on the impropriety of some corona products of the form $K_1 \odot H$, where the graph $H$ belongs to one of several well-known classes of graphs. After establishing this result, we expand it to an upper bound on the impropriety of $G \odot H$, where $G$ is any arbitrary graph. 

\begin{theorem}\label{thm:universal}
Let $H$ be a graph that is one of the following:
\begin{itemize}
\item[(1)] a path $P_k$, 
\item[(2)] a cycle $C_k$, 
\item[(3)] a star $S_k$, 
\item[(4)] a spider $SP_k$ ($k \le 4$).
\end{itemize}
Then $\im{K_1\odot H} \le 2$.
\end{theorem}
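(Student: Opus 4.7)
The plan is to verify each of the four families separately by exhibiting an explicit $2$-improper interval coloring $\varphi$ and checking the required conditions at every vertex. For case (1), $H = P_k$ with path $u_1 u_2 \cdots u_k$, I would set $\varphi(u_j u_{j+1}) = j$ for $j \in [k-1]$ and $\varphi(v u_j) = j$ for $j \in [k]$; the multiset at each interior $u_j$ is $\{j-1, j, j\}$, a $2$-improper interval $[j-1, j]$, and the multisets at $u_1$, $u_k$, and $v$ are honest intervals. For case (2), $H = C_k$, I would observe that $K_1 \odot C_k$ is isomorphic to the wheel $W_{k+1}$ (recall that in the paper's convention $W_n$ has $n$ total vertices), so Theorem~\ref{thm:wheels} yields the bound directly. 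For case (3), $H = S_k$ with center $c^\star$ and leaves $u_1, \ldots, u_k$, I would set $\varphi(v c^\star) = 1$, $\varphi(c^\star u_i) = i+1$, and $\varphi(v u_i) = i$; the multisets $\{1, 2, \ldots, k+1\}$ at $c^\star$, $\{1, 1, 2, \ldots, k\}$ at $v$, and $\{i, i+1\}$ at each $u_i$ are all at worst $2$-improper intervals.

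The bulk of the work is in case (4), the spider $SP_k$ with $k \in \{3, 4\}$. Let $c^\star$ be the center of $SP_k$ and let $u_{i, 1} u_{i, 2} \cdots u_{i, \ell_i}$ denote the $i$-th leg. My plan has three stages. First, I would choose $\varphi(v c^\star)$ together with the colors $\varphi(c^\star u_{i, 1})$ for $i \in [k]$ so that the resulting multiset at $c^\star$ is a $2$-improper interval; since only $k + 1 \leq 5$ edges meet at $c^\star$, there is substantial flexibility here. Next, I would extend the coloring along each leg using a shifted copy of the pattern from case (1), setting $\varphi(u_{i, j-1} u_{i, j}) = \varphi(v u_{i, j}) = s_i + j$ for a leg-specific shift $s_i$ compatible with the previously chosen $\varphi(c^\star u_{i, 1}) = s_i + 1$. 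Just as in case (1), this guarantees a $2$-improper interval at every interior leg vertex $u_{i, j}$ and at each leaf. Finally, I would exploit the remaining freedom in $\varphi(v u_{i, 1})$ and $\varphi(v u_{i, \ell_i})$, whose incident interval constraints leave only a short list of legal choices, to adjust the color multiset at $v$ so that it too is a $2$-improper interval.

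The main obstacle is reconciling (a) the interval condition at $c^\star$, which forces the colors $\varphi(c^\star u_{i, 1}) = s_i + 1$ to lie inside a common interval of length at most $k + 1 \leq 5$, with (b) the $2$-improper condition at $v$, which forces the ranges $[s_i + 1, s_i + \ell_i]$ contributed by different legs to have pointwise overlap at most $2$ everywhere. For $k \leq 4$ this tension can be resolved by a case analysis on the multiset of leg lengths: when two legs are long one pairs them to share the same shift so that each color in the common range attains multiplicity exactly $2$ at $v$, and when a third or fourth leg is present one either keeps it short enough to fit inside the existing range or uses the boundary slack near $c^\star$ and at each leaf to reposition its contribution. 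The case analysis is fiddly but the number of qualitatively different arrangements is bounded, which is what makes $k \leq 4$ tractable.
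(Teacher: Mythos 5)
Your cases (1)--(3) are essentially the paper's proof: the path and star colorings are the same construction up to a harmless relabeling (the paper sets $\varphi(yv_i)=\varphi(xv_i)=i$ and $\varphi(xy)=k+1$ for the star, which is proper at the center and the universal vertex and $2$-improper only at the leaves), and reducing case (2) to Theorem~\ref{thm:wheels} is exactly what the paper does. The genuine gap is in case (4). As you describe it, every leg is colored with the increasing pattern $s_i+1, s_i+2, \ldots, s_i+\ell_i$, with the edge from the universal vertex $v$ to $u_{i,j}$ repeating the color $s_i+j$. The interval condition at the spider's center $c^\star$ confines the $k+1\le 5$ colors $\varphi(vc^\star), s_1+1,\ldots,s_k+1$ to an interval of length at most $5$, so the shifts $s_i$ pairwise differ by at most $4$. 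If the spider has four legs each of length at least $5$ (or even three such legs), the upward ranges $[s_i+1,\,s_i+\ell_i]$ then share a common color, which therefore appears four (resp.\ three) times on edges incident with $v$; the coloring is not $2$-improper, and no choice of shifts within your scheme avoids this. Your proposed remedies do not escape the problem either: a third leg whose range ``fits inside'' the common range of a paired pair of legs produces multiplicity $3$ there, and boundary slack at $c^\star$ or at the leaves cannot move a long leg's range out of the forced overlap.

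The missing idea, which is how the paper handles case (4), is to run the legs in two \emph{opposite} directions: give the edge $vc^\star$ color $0$, color two of the legs (and their chords to $v$) with $1,2,3,\ldots$ and the other two with $-1,-2,-3,\ldots$. Then the multiset at $c^\star$ is $\{-1,-1,0,1,1\}$, the multiset at the universal vertex is an interval around $0$ in which each color occurs at most twice (once per leg on each side of $0$), and each leg vertex sees the same $\{c,c,c\pm 1\}$ pattern as in your case (1). This works uniformly for all leg lengths, eliminates the case analysis entirely, and is precisely where the hypothesis $k\le 4$ enters: only two legs can be sent in each direction.
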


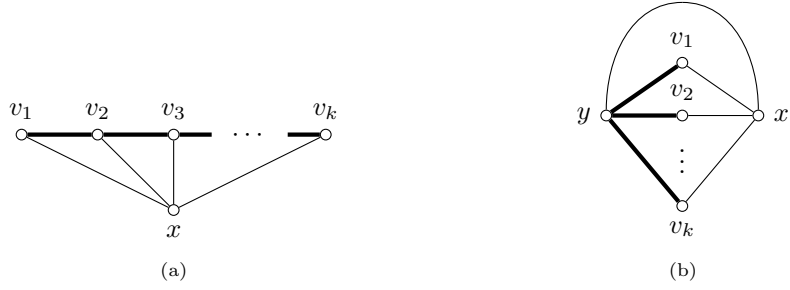
\begin{figure}
    \centering
    \begin{subfigure}[b]{0.4\textwidth}
    \centering
\begin{tikzpicture}
[vert/.style={circle, fill=white, draw=black, inner sep=0.05cm}]
\begin{scope}
\node[vert, label={-90:$x$}] (X) at (0,0) {};
\node[vert, label={90:$v_1$}] (V1) at (-2,1) {};
\node[vert, label={90:$v_2$}] (V2) at (-1,1) {};
\node[vert, label={90:$v_3$}] (V3) at (0,1) {};
\node[vert, label={90:$v_k$}] (Vk) at (2,1) {};

\draw (X) -- (V1);
\draw (X) -- (V2);
\draw (X) -- (V3);
\draw (X) -- (Vk);
\draw[ultra thick] (V1) -- (V2);
\draw[ultra thick] (V2) -- (V3);
\draw[ultra thick] (V3) -- (0.5,1);
\draw[ultra thick] (1.5,1) -- (Vk);

\node at (1,1) {$\cdots$};
\end{scope}
\end{tikzpicture}
\caption{}
\label{fig:universal_a}
\end{subfigure}
\begin{subfigure}[b]{0.4\textwidth}
\centering
\begin{tikzpicture}
[vert/.style={circle, fill=white, draw=black, inner sep=0.05cm}]

\begin{scope}
\node[vert, label={180:$y$}] (Y) at (0,0){};
\node[vert, label={90:$v_1$}] (V1) at (1,0.7){};
\node[vert, label={90:$v_2$}] (V2) at (1,0){};
\node at (1,-0.5) {$\vdots$};
\node[vert, label={-90:$v_k$}] (Vk) at (1,-1.2){};
\node[vert, label={0:$x$}] (X) at (2,0){};
\draw[ultra thick] (Y) -- (V1)
(Y) -- (V2)
(Y) -- (Vk);
\draw (X) -- (V1)
(X) -- (V2)
(X) -- (Vk);
\draw (Y) to[out=90, in=180] (1,1.5);
\draw (X) to[out=90, in=0] (1,1.5);
\end{scope}

\end{tikzpicture}
\caption{}
\label{fig:universal_b}
\end{subfigure}
\caption{Illustrations of $G = K_1 \odot H$, where $H$ is path on $k$ vertices or $H$ is a star with $k$ leaves.}
    \label{fig:universal}
\end{figure}

\begin{proof}
(1) Let $H$ be a path $v_1 v_2 \ldots v_k$.
Let $G = K_1 \odot H$ be obtained by attaching a universal vertex $x$ to every vertex of $H$.
See Figure~\ref{fig:universal_a}.
Let us define $\varphi:E(G) \rightarrow \mathbb{N}$ by the following rule.
For each $i \in [k-1]$, let $\varphi(v_ix) = \varphi(v_i v_{i+1}) = i$.
Let $\varphi(v_{k}x) = k$.
Then $\varphi$ is a 2-improper interval coloring of $G$.
Thus, $\im{G} \le 2$.

(2) If $H$ is a cycle, then $K_1 \odot H$ is a wheel graph, which has impropriety at most 2 by Theorem~\ref{thm:wheels}. 
Thus, $\im{G} \le 2$.

(3) Let $H$ be a star with its center vertex $y$ and leaves $v_1, v_2, \ldots, v_k$.
Let $G= K_1 \odot H$ be obtained by attaching a universal vertex $x$ to every vertex of $H$.
See Figure~\ref{fig:universal_b}.
Let us define $\varphi:E(G) \rightarrow \mathbb{N}$ by the following rule.
For each $i \in [k]$, let $\varphi(yv_i) = \varphi(xv_i) = i$.
Let $\varphi(xy) = k+1$.
Then $\varphi$ is a 2-improper interval coloring of $G$.
Thus, $\im{G} \le 2$.

(4) Let $H$ be a spider graph with center $v$ and four legs $(u_1, u_2, \ldots, u_{k})$, $(w_1, w_2, \ldots, w_{\ell})$, $(y_1, y_2, \ldots, y_{s})$, and $(z_1, z_2, \ldots, z_{t})$. 
Let $G = K_1 \odot H$ be obtained by attaching a universal vertex $x$ to every vertex of $H$.
See Figure~\ref{fig:universal2}.
Let us define $\varphi:E(G) \rightarrow \mathbb{Z}$ by the following rule.
Let $\varphi(xv) = 0$, $\varphi(vu_1) = \varphi(xu_1) = \varphi(vw_1) = \varphi (xw_1)=1$, and $\varphi(vy_1) = \varphi(xy_1) = \varphi(vz_1) = \varphi(xz_1) = -1$.
For each $i \in [k-1]$, 
let $\varphi(u_{i+1}u_i) = \varphi(u_{i+1}x)=i+1$.
For each $i \in [\ell-1]$,
let $\varphi(w_{i+1}w_i) = \varphi(w_{i+1}x)=i+1$.
For each $i \in [s-1]$,
let $\varphi(y_{i+1}y_i) = \varphi(y_{i+1}x)=-(i+1)$. 
For each $i \in [t-1]$,
let $\varphi(z_{i+1}z_i) = \varphi(z_{i+1}x)=-(i+1)$. 
Then $\varphi$ is a 2-improper interval coloring of $G$.
Thus, $\im{G} \le 2$.

\begin{figure}[ht]
\centering
\begin{tikzpicture}
[vert/.style={circle, fill=white, draw=black, inner sep=0.05cm}]
\node[vert, label={90:$v$}] (V) at (0:0){};
\node[vert, label={90:$u_1$}](U1) at (25:0.9){};
\node[vert, label={90:$u_2$}](U2) at (25:2.75){};
\node at (25:3.55) {.};
\node at (25:3.75) {.};
\node at (25:3.95) {.};
\node[vert, label={90:$u_k$}](Uk) at (25:4.75){};

\node[vert, label={90:$w_1$}](W1) at (155:0.9){};
\node[vert, label={90:$w_2$}](W2) at (155:2.75){};
\node at (155:3.55) {.};
\node at (155:3.75) {.};
\node at (155:3.95) {.};
\node[vert, label={90:$w_{\ell}$}](Wl) at (155:4.75){};

\node[vert, label={90:$y_1$}](Y1) at (-150:1){};
\node[vert, label={90:$y_2$}](Y2) at (-150:2){};
\node at (-150:3.2) {.};
\node at (-150:3) {.};
\node at (-150:2.8) {.};
\node[vert, label={90:$y_s$}](Ys) at (-150:4){};

\node[vert, label={90:$z_1$}](Z1) at (-30:1){};
\node[vert, label={90:$z_2$}](Z2) at (-30:2){};
\node at (-30:3.2) {.};
\node at (-30:3) {.};
\node at (-30:2.8) {.};
\node[vert, label={90:$z_t$}](Zt) at (-30:4){};

\node[vert, label={-90:$x$}](X) at (-90:2){};

\draw[ultra thick] (155:3.15) -- (W2)
(-30:2.5) -- (Z2)
(25:3.15) -- (U2)
(-150:2.5) -- (Y2)
(Wl) -- (155:4.25)
(Ys) -- (-150:3.5)
(Uk) -- (25:4.25)
(Zt) -- (-30:3.5);

\draw[ultra thick] (V) -- (W1)
(V) -- (Y1)
(V) -- (U1)
(V) -- (Z1)
(W1) -- (W2)
(U1) -- (U2)
(Y1) -- (Y2)
(Z1) -- (Z2);

\draw (X) -- (V)
(X) -- (Y1)
(X) -- (Y2)
(X) -- (Ys)
(X) -- (Z1)
(X) -- (Z2)
(X) -- (Zt)
(X) -- (W1)
(X) -- (W2) 
(X) -- (Wl)
(X) -- (U1)
(X) -- (U2)
(X) -- (Uk)
;
\end{tikzpicture}
\caption{Illustration of $G = K_1 \odot H$, where $H$ is a spider graph with four legs.}
\label{fig:universal2}
\end{figure}
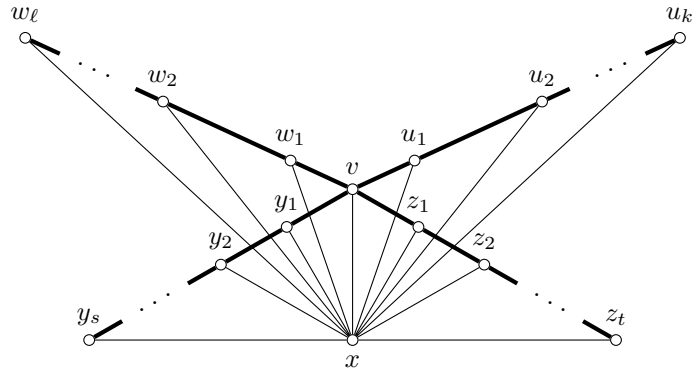

The case for a spider graph with less than four legs follows analogously from the above proof.
\end{proof}

As a corollary to Theorem~\ref{thm:universal}, we obtain the following result.

\begin{corollary}\label{cor:corona_product}
Let $H$ be a graph that is one of the following:
\begin{itemize}
\item[(1)] a path $P_k$, 
\item[(2)] a cycle $C_k$, 
\item[(3)] a star $S_k$, 
\item[(4)] a spider $SP_k$ ($k \le 4$).
\end{itemize}
Then for any graph $G$,
$\im{G \odot H} \leq \max\{2,\im{G}\}$.
\end{corollary}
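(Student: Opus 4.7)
The plan is to assemble an improper interval coloring of $G \odot H$ by combining a $\im{G}$-improper interval coloring of $G$ with, at each vertex $v_i$ of $G$, a shifted copy of the $2$-improper interval coloring of $K_1 \odot H$ produced by Theorem~\ref{thm:universal}. The key structural observation is that for each $i \in [n]$, the subgraph $F_i$ of $G \odot H$ spanned by $v_i$, the copy $H_i$, and the $v_i$-to-$H_i$ edges is isomorphic to $K_1 \odot H$, with $v_i$ playing the role of the universal vertex.

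First I would fix a $\im{G}$-improper interval coloring $\varphi_G$ of $G$ and, for each $i$, let $[a_i, b_i]$ be the integral interval of colors it assigns at $v_i$. For each $i$, I would then apply Theorem~\ref{thm:universal} to $F_i$ to obtain a $2$-improper interval coloring $\psi_i$ whose colors at the universal vertex $v_i$ form some integral interval $[c_i, d_i]$. Shifting $\psi_i$ additively by $b_i + 1 - c_i$ pushes its interval at $v_i$ up to $[b_i + 1,\, b_i + 1 + (d_i - c_i)]$, immediately above $[a_i, b_i]$. The final coloring $\Phi$ of $G \odot H$ is defined to equal $\varphi_G$ on $E(G)$ and the shifted $\psi_i$ on $E(F_i) \setminus E(G)$ for each $i$; since distinct $F_i$'s meet only at the vertices $v_j$, and those are exactly where the two ingredients are glued, this is well-defined.

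For the verification, at each $v_i$ the two intervals $[a_i, b_i]$ and $[b_i + 1,\, b_i + 1 + (d_i - c_i)]$ abut to form a single integral interval, and because they are disjoint the multiplicity at $v_i$ of any single color is either inherited from $\varphi_G$ (at most $\im{G}$) or from the shifted $\psi_i$ (at most $2$), so at most $\max\{\im{G}, 2\}$ overall; every vertex of $G$ other than $v_i$ is unaffected, and every vertex of $H_i$ other than $v_i$ sees only a shifted copy of $\psi_i$, which is $2$-improper and interval by construction. The main, and essentially only, obstacle is to confirm that the colorings furnished by Theorem~\ref{thm:universal} use an integral interval of colors at the universal vertex itself — not merely at the vertices of $H$ — since without this the shift-and-paste step need not preserve the interval property at $v_i$. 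This is directly visible in the explicit constructions in the proof of Theorem~\ref{thm:universal}: case~(1) puts colors $\{1, \ldots, k\}$ at the universal vertex, case~(3) puts $\{1, \ldots, k+1\}$, case~(4) puts the symmetric interval $\{-\max(s, t), \ldots, \max(k, \ell)\}$, and in case~(2) the universal vertex is the hub of a wheel, which carries an interval coloring by Theorem~\ref{thm:wheels}. Once this is in hand, the rest of the argument is bookkeeping and yields $\im{G \odot H} \leq \max\{2, \im{G}\}$.
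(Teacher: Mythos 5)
Your proposal is correct and follows essentially the same route as the paper: color $G$ first, then shift the $2$-improper interval coloring of each copy of $K_1\odot H$ from Theorem~\ref{thm:universal} so that its colors at the universal vertex begin one above the maximum color at the corresponding vertex of $G$. The only extra work you do --- verifying that the colors at the universal vertex form an integral interval --- is in fact automatic, since by definition an improper interval coloring of $K_1\odot H$ already has the interval property at every vertex, including the universal one.
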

\begin{proof}
Note that $G \odot H$ is obtained by identifying each vertex of $G$ 
with a universal vertex of a copy of $K_1 \odot H$.
Let $\varphi$ be a $\mu_{int}(G)$-improper interval coloring of $G$.
By Theorem~\ref{thm:universal}, for each $v \in V(G)$, we can extend $\varphi$ into a copy of $K_1 \odot H$ with impropriety at most $2$. 
This can be done by increasing the colors used in the improper interval coloring of each copy of $K_1 \odot H$ so that the minimum color assigned to an edge incident with the universal vertex is one greater than the maximum color assigned by $\varphi$ to an edge in $E(G)$ incident with the corresponding vertex in 
$G$.
Thus, $\im{G \odot H} \leq \max\{2,\im{G}\}$.
\end{proof}

We present an example of a corona product of two graphs that attains this upper bound. 

\begin{example}
\label{ex:corona}
    \emph{Consider the graph $G=C_3 \odot P_2$. Let $u,v,w$ be the vertices of $C_3$ and $a_i,b_i$ the vertices of each copy of $P_2$, for $i\in\{u,v,w\}$. See Figure~\ref{fig:c3p2}. We will show that there does not exist an interval coloring of $G$, and then construct a $2$-improper interval coloring. Observe that $2 = \max\{2, \im{C_3}\}$.}
    
    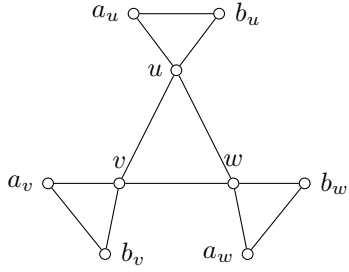
\begin{figure}[ht]
\centering
    \begin{tikzpicture}[every node/.style={circle, draw=black, fill=white, inner sep=0.05cm}]

 	\node [label=left:$u$] (0) at (0, 0.75) {};
		\node [label=$v$] (1) at (-0.75, -0.75) {};
		\node [label=$w$] (2) at (0.75, -0.75) {};
		\node [label=left:$a_u$] (3) at (-0.5625, 1.5) {};
		\node [label=right:$b_u$] (4) at (0.5625, 1.5) {};
		\node [label=left:$a_v$] (5) at (-1.6875, -0.75) {};
		\node [label=right:$b_v$] (6) at (-0.9375, -1.6875) {};
		\node [label=left:$a_w$] (7) at (0.9375, -1.6875) {};
		\node [label=right:$b_w$] (8) at (1.6875, -0.75) {};

  \draw (0) to (1);
		\draw (1) to (2);
		\draw (2) to (0);
		\draw (0) to (3);
		\draw (3) to (4);
		\draw (4) to (0);
		\draw (5) to (1);
		\draw (1) to (6);
		\draw (6) to (5);
		\draw (2) to (7);
		\draw (7) to (8);
		\draw (8) to (2);
\end{tikzpicture}
\caption{The corona product $C_3 \odot P_2$ in Example~\ref{ex:corona}.}
\label{fig:c3p2}
\end{figure}
    
    \emph{Suppose, without loss of generality, that the edge $a_ub_u$ receives color $x$. Then the edges $ua_u$ and $ub_u$ must receive the colors $x-1$ and $x+1$ in an interval coloring of $G$. Moreover, one of $uv$ and $uw$ must also receive color $x$. Without loss of generality, assume that $uv$ receives color $x$. This means that the edge $uw$ must receive either color $x-2$ or $x+2$.}

    \emph{Now, the edges $va_v$ and $vb_v$ must receive colors different from $x$, that differ from each other by exactly 2 (in order to properly color $a_vb_v$) and such that the addition of exactly one color will form an interval with $x$. The available options are $\{x-3,x-1\},\{x-1,x+1\}$ and $\{x+1,x+3\}$.} 

    \emph{First suppose that $va_v$ and $vb_v$ are colored using $x-3$ and $x-1$. Then, in order to form an interval at the vertex $v$, the edge $vw$ must receive color $x-2$ and thus $uw$ receives color $x+2$, in order to form a proper coloring. However, this means that at vertex $w$, the colors $x-2$ and $x+2$ appear with only two other incident edges left to form an interval. Thus, an interval coloring is not possible in this case. A similar argument shows that either of the other two color options for $vv_a$ and $vb_v$ cannot lead to an interval coloring of $G$. Therefore, $\mu_{int}(G) \geq 2$.} 

    \emph{A 2-improper interval coloring $\varphi$ of $G$ can be constructed as follows. Let $\varphi(ia_i) = 1$, $\varphi(ib_i) = 3$, and $\varphi(a_ib_i) = 2$ for $i\in\{u,v,w\}$. Finally, let $\varphi(uv) = \varphi(vw) = \varphi(uw) = 2$. Since only the color 2 appears twice at the vertices $u$, $v$ and $w$, this is a 2-improper interval coloring of the graph. Therefore, $\mu_{int}(G)=2$.} 
\end{example}

It remains an open question whether there exists an infinite family of graphs $G\odot H$ with $G\neq K_1$ whose impropriety attains the upper bound of Corollary~\ref{cor:corona_product}.

As a step towards determining the impropriety of the graph $G \odot T$, where $T$ is a tree, we begin by investigating the special case where $T$ is a caterpillar. Similar to paths, stars and spiders, we construct a 2-improper interval coloring of $K_1 \odot C$, where $C$ is a caterpillar and use this to determine an upper bound on $\mu_{int}(G \odot C)$ for any graph $G$.

\begin{theorem}\label{thm:caterpillar}
Let $C$ be a caterpillar. Then $\mu_{int}(G\odot C) \leq \max\{2,\mu_{int}(G)\}$. 
\end{theorem}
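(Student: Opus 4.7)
The plan is to mirror the structure of Corollary~\ref{cor:corona_product}: first establish the base case $\im{K_1 \odot C} \le 2$ for a caterpillar $C$, taking care that the universal vertex receives all distinct colors, and then apply the same shifting argument used to deduce Corollary~\ref{cor:corona_product} from Theorem~\ref{thm:universal}.

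For the base case, let the spine of $C$ be $p_1 p_2 \cdots p_n$, and for each $i \in [n]$ let $\ell_{i,1}, \ldots, \ell_{i,d_i}$ be the leaves attached to $p_i$ (allowing $d_i = 0$). Denote by $x$ the universal vertex in $K_1 \odot C$, and set $a_1 = 1$ together with the recursion $a_{i+1} = a_i + d_i + 1$. I would define $\varphi$ on the edges of $K_1 \odot C$ by
\[
\varphi(xp_i) = a_i, \quad \varphi(x\ell_{i,j}) = \varphi(p_i\ell_{i,j}) = a_i + j, \quad \varphi(p_i p_{i+1}) = a_{i+1}.
\]
The verification is then a direct check. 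At the universal vertex $x$, the colors are the consecutive integers $a_1, a_1+1, \ldots, a_n + d_n$, each occurring exactly once, so the impropriety at $x$ is~$1$. At each leaf $\ell_{i,j}$ the two incident edges both receive color $a_i + j$, which trivially forms an interval. At an interior spine vertex $p_i$ with $1 < i < n$, the incident colors are $a_i$ (from both $xp_i$ and $p_{i-1}p_i$), then $a_i+1, \ldots, a_i + d_i$ from the leaves, and finally $a_{i+1} = a_i + d_i + 1$ from $p_i p_{i+1}$; this is the interval $[a_i, a_{i+1}]$ with only $a_i$ repeated. The endpoints $p_1$ and $p_n$ are analogous special cases.

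For the main statement, I would take a $\mu_{int}(G)$-improper interval coloring $\varphi_G$ of $G$. For each $v \in V(G)$, let $\beta_v$ be the largest color appearing on an edge of $G$ incident with $v$, and apply the above coloring to the copy of $K_1 \odot C$ consisting of $v$ together with its attached copy $C_v$, but shifted by $\beta_v$: the edge from $v$ to the $i$-th spine vertex of $C_v$ receives color $\beta_v + a_i$, and so on. Because the shifted interval at $v$ coming from $C_v$ is $[\beta_v + 1, \beta_v + |V(C)|]$ and uses each color only once at $v$, concatenating it with the original interval at $v$ in $\varphi_G$ keeps the property that the set of colors at $v$ is an interval, and leaves the impropriety at $v$ equal to its impropriety in $\varphi_G$, which is at most $\mu_{int}(G)$. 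At every other vertex of $G \odot C$ the impropriety is at most~$2$ by the base case, so $\mu_{int}(G \odot C) \le \max\{2, \mu_{int}(G)\}$.

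The main obstacle is bookkeeping rather than conceptual: one has to choose the shifts $a_i$ so that the spine color $\varphi(p_{i-1}p_i)$ automatically coincides with $\varphi(xp_i)$, which is what creates the only repeated color at an interior spine vertex and forces the impropriety to be exactly~$2$ there (but never larger). Once this is set up, the endpoint cases $p_1, p_n$ and the possibility that some $d_i = 0$ all fall out of the same formulas, and the extension to general $G$ is immediate because the construction uses all distinct colors at the universal vertex.
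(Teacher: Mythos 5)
Your proof is correct and follows essentially the same approach as the paper: color the caterpillar block by block along the spine with consecutive integers, give each leaf edge and its companion edge to the universal vertex the same color, give the universal-to-spine edge the color of the preceding spine edge, and shift each copy past the maximum color at the corresponding vertex of $G$. The only (cosmetic) difference is that your indexing makes all colors at the universal vertex distinct, whereas the paper's assignment repeats exactly one color there; both yield the same bound $\max\{2,\mu_{int}(G)\}$.
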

\begin{proof}
Let $v_1,v_2,\dots,v_k$ be the vertices of the path obtained by removing the leaves of the caterpillar $C$, with $v_iv_{i+1}\in E(C)$ for $i \in [k-1]$.

We construct a $\max\{2,\mu_{int}(G)\}$-improper interval coloring of $G \odot C$, $\varphi:E(G\odot C) \to \mathbb{N}$, as follows. 
Let $\hat{\varphi}$ be a $\mu_{int}(G)$-improper interval coloring of $G$. For each edge $e\in E(G)$, let $\varphi(e) = \hat{\varphi}(e)$.

Let $C_x$ be the copy of $C$ corresponding to the vertex $x\in V(G)$. Let $m(x)$ be the maximum value of $\varphi(e)$ over all edges $e\in E(G)$ that are incident with $x$. Now we color each $C_x$ as follows. Let $\ell_{1},\ell_{2},\dots,\ell_{c_1}$ be the leaves in $C_x$ adjacent to the vertex $v_1$. Then, let $\varphi(v_1\ell_i) = m(x)+i$ for $1\leq i\leq c_1$. Let $\varphi(v_1v_2) = m(x)+c_1+1$. 
For each vertex $v_i$, $2 \le i \le k$, let $\ell_{c_{i-1}+2},\ell_{c_{i-1}+3},\dots,\ell_{c_i}$ be the leaves in $C_x$ adjacent to $v_i$. Let $\varphi(v_i\ell_j) = m(x)+j$, for $c_{i-1}+2 \leq j \leq c_i$. Finally, for $2\leq i < k$, let $\varphi(v_iv_{i+1}) = m(x)+c_i+1$.

To complete the coloring of $G \odot C$, a color must be assigned to each edge containing one vertex $x \in V(G)$ and one vertex in $C_x$, the corresponding copy of $C$. Let $\varphi(xv_1) = m(x)+1$ and $\varphi(xv_i) = m(x) + c_{i-1}+1$, for $1<i\leq k$. Finally, let $\varphi(xw) = \varphi(v_iw)$ for every leaf vertex in $w\in V(C)$, where $v_i$ is the vertex adjacent to $w$ in $C_x$. Note that there is no edge $v_iw$ for which $\varphi(v_iw) = m(x) + c_{i-1} + 1$, so only the color $m(x)+1$ is assigned to two distinct edges incident with $x$. 

The coloring $\varphi$ defines an 
improper interval coloring. Each vertex in $G$ has at most $\max\{2,\mu_{int}(G)\}$ incident edges that share the same color, two containing vertices in the corresponding copy of $C$ and at most $\mu_{int}(G)$ containing other vertices in $G$. Each vertex in a copy of $C$ has exactly two incident edges that share the same color. Therefore, $\varphi$ is a $\max\{2,\mu_{int}(G)\}$-improper interval coloring of $G \odot C$. 
\end{proof}

So far, we have determined upper bounds on the impropriety of $G \odot H$ only when $H$ is a cycle or belongs to a specific subclass of trees. For an arbitrary graph $H$, we have the following upper bound on the impropriety of $G \odot H$ in terms of the impropriety of the graph $G$ and the order of the graph $H$. 

\begin{theorem}
\label{thm: corona upper bound}
    For any graphs $G$ and $H$, $\mu_{int}(G \odot H) \leq \max\{\mu_{int}(G), |V(H)|\}$. 
\end{theorem}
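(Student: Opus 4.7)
The plan is to extend a $\mu_{int}(G)$-improper interval coloring of $G$ to the whole of $G\odot H$ by painting each pendant copy of $H$, together with the star of edges attaching it to $G$, using a single fresh color. Fix an optimal improper interval coloring $\varphi$ of $G$, and for each $v_i\in V(G)$ let $m(v_i)$ denote the largest color that $\varphi$ assigns to any $G$-edge incident with $v_i$ (or $0$ if $v_i$ is isolated in $G$). Extend $\varphi$ to a coloring $\varphi'$ on $E(G\odot H)$ by assigning the common color $m(v_i)+1$ to every edge of $H_i$ and to every pendant edge $v_iu$ with $u\in V(H_i)$, for each $i\in[n]$.

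To verify that $\varphi'$ is $\max\{\im{G},|V(H)|\}$-improper interval, I would check the two kinds of vertices separately. At a vertex $v_i\in V(G)$, the $G$-edges already contribute an integral interval of colors with maximum $m(v_i)$ and impropriety at most $\im{G}$; the $|V(H)|$ new pendant edges all carry the color $m(v_i)+1$, which sits immediately above that interval. Hence the combined color set at $v_i$ is still an interval, and the fresh color $m(v_i)+1$ contributes impropriety exactly $|V(H)|$ while each color at most $m(v_i)$ still appears at most $\im{G}$ times, so the impropriety at $v_i$ is at most $\max\{\im{G},|V(H)|\}$. At a vertex $u\in V(H_i)$ every incident edge lies inside the $i$-th gadget and carries the color $m(v_i)+1$, so $u$ sees a one-element interval with impropriety $d_H(u)+1\le |V(H)|$.

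There is no real obstacle here. The only point to check is that the shift by $m(v_i)+1$ really does yield an interval at $v_i$, and this is immediate from the definition of $m(v_i)$ as the maximum $G$-color at $v_i$, so no color can be created below the existing interval and only one new color is inserted, directly above it. The construction also uses nothing about $H$ beyond $|V(H)|$, which explains why the upper bound cannot see finer invariants of $H$ and why it is compatible with the sharper bounds obtained earlier for the specific families of $H$ considered in Theorem~\ref{thm:caterpillar} and Corollary~\ref{cor:corona_product}.
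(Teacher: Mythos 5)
Your proposal is correct and follows essentially the same construction as the paper: extend an optimal coloring of $G$ by giving all pendant edges at $v$ the fresh color $m(v)+1$ and coloring each copy of $H$ monochromatically. The only (immaterial) difference is that the paper assigns the internal edges of $H_v$ the color $m(v)+2$ rather than $m(v)+1$, giving impropriety $\max\{1,d_H(u)\}$ instead of $d_H(u)+1$ at vertices of $H_v$; both quantities are at most $|V(H)|$, so both versions yield the stated bound.
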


\begin{proof}
We construct a $\max\{\mu_{int}(G),|V(H)|\}$-improper interval coloring of $G \odot H$, $\varphi:E(G\odot H) \to \mathbb{N}$, as follows. 
Let $\hat{\varphi}$ be a $\mu_{int}(G)$-improper interval coloring of $G$. For every edge $e\in E(G)$, let $\varphi(e) = \hat{\varphi}(e)$. 

For each vertex $v\in V(G)$, let $H_v$ be the copy of $H$ corresponding to $v$ and let $m(v)$ be the maximum value of $\varphi$ over all edges in $E(G)$ that are incident with $v$. Then, for each vertex $w\in V(H_v)$, let $\varphi(vw) = m(v)+1$. Finally, for every $u,w\in V(H_v)$, let $\varphi(uw) = m(v) + 2$. 

This is an 
improper interval coloring of $G\odot H$. Each vertex in $V(G)$ has at most $\max\{\mu_{int}(G),|V(H)|\}$ incident edges that are assigned the same color. Each vertex $v$ in each copy of $H$ has at most $\max\{1,d_H(v)\}$ incident edges that are assigned the same color. Since $d_H(v) < |V(H)|$ for every vertex $v\in V(H)$, it follows that the coloring $\varphi$ is a $\max\{\mu_{int}(G),|V(H)|\}$-improper interval coloring. 
\end{proof}

We note that this upper bound on $\mu_{int}(G \odot H)$ is sharp in the case when $G$ is interval colorable and $H \cong K_1$. These are not the only graphs attaining this bound, as the corona product $C_3 \odot P_2$ in Example~\ref{ex:corona} has impropriety $\max\{\mu_{int}(C_3),|V(P_2)|\}=2$.

Note that the known sharp examples for \cref{thm: corona upper bound} satisfy $|V(H)| \le 2$.
If we add the condition that $|V(H)| \ge 3$ to \cref{thm: corona upper bound}, 
then we obtain a better upper bound on $\im{G \odot H}$ as follows.
\begin{theorem}
\label{thm:cor_gen_3}
    For any graphs $G$ and $H$ with $|V(H)| \ge 3$, $$\im{G \odot H} \le \max\{\im{G}, \left\lceil \frac{|V(H)|}{3} \right\rceil, \Delta(H)+1\}.$$
\end{theorem}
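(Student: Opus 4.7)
The plan is to adapt the construction from the proof of \cref{thm: corona upper bound} by spreading the colors used on the edges from each vertex of $G$ to its corresponding copy of $H$ over three consecutive colors instead of just one. In that earlier proof, all $|V(H)|$ edges from a vertex $v \in V(G)$ to its copy $H_v$ received the single color $m(v)+1$, which contributed $|V(H)|$ to the impropriety at $v$, while all edges inside $H_v$ received $m(v)+2$, contributing at most $\Delta(H)$ to the impropriety at each $w \in V(H_v)$. I would keep the edges inside each $H_v$ monochromatic but split the $|V(H)|$ edges leaving $v$ across the three colors $m(v)+1,\ m(v)+2,\ m(v)+3$; the only price is that the color $m(v)+2$ can now occur $\Delta(H)+1$ times at one vertex of $H_v$.

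Explicitly, I would start by fixing a $\mu_{int}(G)$-improper interval coloring $\hat{\varphi}$ of $G$ and, for each $v \in V(G)$, setting $m(v)$ to be the largest color assigned by $\hat{\varphi}$ to an edge incident with $v$. I would then extend $\hat{\varphi}$ to $\varphi \colon E(G \odot H) \to \mathbb{N}$ by giving every edge of $H_v$ the color $m(v)+2$, partitioning $V(H_v)$ into three sets $W_1, W_2, W_3$ of sizes as equal as possible, and assigning $\varphi(vw) = m(v)+i$ whenever $w \in W_i$. The assumption $|V(H)| \ge 3$ lets me make each $W_i$ nonempty with $|W_i| \le \lceil |V(H)|/3 \rceil$.

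The verification would then proceed vertex by vertex. At $v$, the $\hat{\varphi}$-colors form an interval ending at $m(v)$; appending $\{m(v)+1,\ m(v)+2,\ m(v)+3\}$ keeps this an interval, with each original color appearing at most $\mu_{int}(G)$ times and each new color appearing at most $\lceil |V(H)|/3 \rceil$ times. At any $w \in V(H_v)$, the colors on edges incident with $w$ are $\{m(v)+2\}$, occurring $d_{H_v}(w) \le \Delta(H)$ times, together with $\varphi(vw) \in \{m(v)+1,\ m(v)+2,\ m(v)+3\}$; this is again an interval, and the only color whose multiplicity at $w$ can exceed $\Delta(H)$ is $m(v)+2$, which reaches at most $\Delta(H)+1$ (precisely when $\varphi(vw) = m(v)+2$).

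The only point that I expect to require any care is the role of the hypothesis $|V(H)| \ge 3$: it is exactly what guarantees that all three of $m(v)+1,\ m(v)+2,\ m(v)+3$ are actually used, so the color set at $v$ is a single contiguous block rather than one with a gap. Everything else reduces to vertex-by-vertex bookkeeping of interval and multiplicity bounds, so I do not anticipate any further technical obstacle.
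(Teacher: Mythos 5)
Your proposal is correct and follows essentially the same argument as the paper's proof: extend a $\mu_{\rm int}(G)$-improper interval coloring of $G$ by partitioning each $V(H_v)$ into three nonempty parts of size at most $\left\lceil |V(H)|/3 \right\rceil$, coloring the edges from $v$ with $m(v)+1$, $m(v)+2$, $m(v)+3$ accordingly, and coloring all edges inside $H_v$ with $m(v)+2$. The vertex-by-vertex verification, including the observation that $|V(H)| \ge 3$ is needed to avoid a gap at $v$ and that the multiplicity at a vertex of $H_v$ is at most $\Delta(H)+1$, matches the paper.
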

\begin{proof}
Let $k = \max\{\im{G}, \left\lceil \frac{|V(H)|}{3} \right\rceil, \Delta(H)+1\}$. We construct a $k$-improper interval coloring of $G \odot H$, $\varphi:E(G \odot H) \rightarrow \mathbb{N}$, as follows.
Let $\hat{\varphi}$ be a $\im{G}$-improper interval coloring of $G$.
For every edge $e \in E(G)$, let $\varphi(e) = \hat{\varphi}(e)$.

For each vertex $v \in V(G)$, let $H_v$ be the copy of $H$ corresponding to $v$ and let $m(v)$ be the maximum value of $\varphi$ over all edges in $E(G)$ that are incident with $v$.
Since $|V(H)| \ge 3$, we can partition $V(H_v)$ into three sets $S_{v,1}, S_{v,2}, S_{v,3}$, where $1 \le |S_{v,i}| \le \left\lceil \frac{|V(H_v)|}{3}\right\rceil$ for each $i \in [3]$.
For each $i \in [3]$ and $w \in V(S_{v,i})$, let $\varphi(vw) = m(v)+i$, 
and for every $u,w \in V(H_v)$, let $\varphi(uw) = m(v)+2$.

This is an improper interval coloring of $G \odot H$. 
Each vertex in $V(G)$ has at most $\max\{\im{G}, \left\lceil \frac{|V(H)|}{3}\right\rceil\}$ incident edges that are assigned the same color.
Each vertex $v$ in each copy of $H$ has at most $1+d_H(v)$ incident edges that are assigned the same color.
Since $d_H(v) \le \Delta(H)$ for every vertex $v \in V(H)$, it follows that the coloring $\varphi$ is a $k$-improper interval coloring.
\end{proof}

Related to Theorems~\ref{thm: corona upper bound} and \ref{thm:cor_gen_3}, we raise a question supported by \cref{cor:corona_product} and \cref{thm:caterpillar}.

\begin{question}
For any graphs $G$ and $H$, is it true that $\im{G \odot H} \le \max\{\im{G}, \im{H}+1\}$?
\end{question}

\section{Conclusion and further directions}

In this paper, we have improved the upper bounds on the interval coloring impropriety of several classes of graphs. However, in many cases there remains a significant difference between the known upper bounds presented here and those that have been conjectured.

In addition to Conjecture \ref{conj:2-trees}, the following is of interest.

\begin{question}
    Does there exist a function $f$ depending on $k$, but not on $G$, such that for every $k$-tree $G$, $\im{G} \leq f(k)$?
\end{question}

A graph is \emph{$k$-degenerate} if every subgraph has a vertex of degree at most $k$. Recall that outerplanar graphs and 2-trees are 2-degenerate, that planar graphs are 5-degenerate, and that every planar graph of girth at least 6 is 2-degenerate (see also \cite{Jumnongnit+2021} for an additional class of 2-degenerate planar graphs). Thus studying the impropriety of $k$-degenerate graphs seems of interest. In particular, we pose the following question. 

\begin{question}
    Let $G$ be  a 2-degenerate graph. Is it true that if $\Delta(G) \geq 4$, then $\im{G} \leq \left \lceil \frac{\Delta(G)}{3} \right \rceil$? Moreover, does there exist a constant $C$ such that $\im{G} \leq C$?
\end{question}

In this article we have continued the study of the impropriety of products of graphs initiated by \cite{casselgren2021improper} where the Cartesian products of graphs were studied. 
Considering the strong products of graphs is a natural choice as well. Note that using the same coloring as for the Cartesian product in \cite[Proposition 4.17]{casselgren2021improper} and coloring the rest of the edges $(u_1,v_1)(u_2,v_2)$ using $\alpha(u_1u_2) + \beta(v_1v_2)$, where $\alpha$ (resp.\ $\beta$) is the $\im{G}$-improper interval coloring of $G$ (resp.\ $\im{H}$-improper interval coloring of $H$), we get intervals 
on the edges incident to each vertex, and the obtained bound is 
\begin{equation}
\label{eq:strong}
    \mu_{int}(G \boxtimes H) \leq \max\{\mu_{int}(G), \mu_{int}(H)\} + \Delta(G)\mu_{int}(H) + \Delta(H) \mu_{int}(G).
\end{equation}

\begin{question}
    Can the bound in \eqref{eq:strong} be improved?
\end{question}

Note that the impropriety is not the only way to attack the difficulty of interval colorability. The interval coloring  thickness has also been introduced and studied (see for example \cite{thickness}), and it would be interesting to see if there is any relation between the interval coloring impropriety and the interval coloring thickness of a graph.

\section*{Acknowledgements}
This work was started at the 2022 Graduate Research Workshop in Combinatorics, which was supported in part by NSF grant 1953985 and a generous award from the Combinatorics Foundation.
MacKenzie Carr was supported by the Natural Sciences and Engineering Research Council of Canada (NSERC) Canadian Graduate Scholarship (No. 456422823). 
Eun-Kyung Cho was supported by Basic Science Research Program through the National Research Foundation of Korea (NRF) funded by the Ministry of Education (No. RS-2023-00244543).
Vesna Iršič was supported in part by the Slovenian Research and Innovation Agency (ARIS) under the grants P1-0297, J1-2452, N1-0285 and Z1-50003, and by the European Union (ERC, KARST, 101071836). We thank the anonymous reviewers for their helpful suggestions and comments.

\bibliographystyle{abbrv}
\bibliography{references}
\end{document}